\providecommand{\U}[1]{\protect\rule{.1in}{.1in}}
\newcommand{\Z}{{\mathbb Z}}
\newcommand{\Q}{{\mathbb Q}}
\newcommand{\Br}{{\mathrm{Br}}}
\newcommand{\Cor}{{\mathrm{Cor}}}
\newcommand{\Div}{{\mathrm{Div}}}
\newcommand{\Gal}{{\mathrm{Gal}}}
\renewcommand{\Im}{{\mathrm{Im}}}
\newcommand{\Ker}{{\mathrm{Ker}}}
\newcommand{\Pic}{\mathrm{Pic}}
\renewcommand{\mod}{\ \mathrm{mod}\ }
\newcommand{\Res}{{\mathrm{Res}}}
\newcommand{\Spec}{{\mathrm{Spec}}}
\font\cyr=wncyr10
\newcommand{\Sha}{\hbox{\cyr X}}
\newcommand{\et}{{\operatorname{\acute{e}t}}}
\numberwithin{equation}{section}
\theoremstyle{remark}
\newtheorem{defi}{\rm{\textbf{Definition}}}[section]
\newtheorem{exam}[defi]{\rm{\textbf{Example}}}
\newtheorem{rem}[defi]{\rm{\textbf{Remark}}}
\theoremstyle{plain}
\newtheorem{thm}[defi]{\rm{\textbf{Theorem}}}
\newtheorem{cor}[defi]{\rm{\textbf{\textbf{Corollary}}}}
\newtheorem{lem}[defi]{\rm{\textbf{Lemma}}}
\newtheorem{prop}[defi]{\rm{\textbf{\textbf{Proposition}}}}
\begin{document}

\title{The unramified Brauer groups of normic bundles }

\author{Dasheng Wei}

\address{Hua Loo-Keng Key Laboratory of Mathematics, Academy of Mathematics and System Science, CAS, Beijing 100190, P.\ R.\ China \emph{and} School of mathematical Sciences, University of  CAS, Beijing 100049, P.R.China}

\email{dshwei@amss.ac.cn}

\date{September 22, 2022}

\begin{abstract}
We produce a partial compactification of the variety given by $P(t)=N_{K/k}(\mathbf z)$
whose Brauer group coincides with the unramified Brauer group, where $K$ is an \'etale $k$-algebra and $P(t)\in k[t]$ is a nonconstant polynomial. Then we obtain a systematic  method to compute the unramified Brauer group
for all such varieties.
 \end{abstract}

\subjclass[2010]{11G35 (11D25, 14F22)}
\keywords{Brauer group, Brauer-Manin obstruction}

\maketitle

\section{Introduction} \label{sec.notation}

The present article focuses on the variety defined over the ground field
$k$ by an equation
\begin{equation}\label{eq:norm}
	P(t) = N_{K/k}({\bf z}),
\end{equation}
where $K$ is an \'etale $k$-algebra, $N_{K/k}$ denotes the norm
map, $\mathbf z$ is  a ``variable'' in $K$ and $P(t)\in k[t]$ is a
nonconstant polynomial.

Suppose $K/k$ is a finite extension of number fields,  for a smooth and proper model of (\ref{eq:norm}), Colliot-Th\'el\`ene conjectured that its rational points are dense in its Brauer-Manin set (see \cite{CT03}).
Colliot-Th\'el\`ene's conjecture has been extensively studied. It is known in the case where $P(t)$ is constant \cite{San81}; if additionally $K/k$ is cyclic or of prime degree, the Hasse principle and weak approximation hold. Depending on Colliot-Th{\'e}l{\`e}ne and Sansuc's descent theory \cite{CTS87},   other known
cases of Colliot-Th\'el\`ene's conjecture, in some cases leading to a
proof of the Hasse principle and weak approximation, include the class
of Ch\^atelet surfaces ($[K:k]=2$ and $\deg(P(t)) \le 4$) \cite{CTSSD87a,CTSSD87b}, a class of singular cubic hypersurfaces ($[K:k]=3$ and $\deg(P(t)) \le 3$) \cite{CTS89}, and the case where $K/k$ is arbitrary and $P(t)$ is split over $k$ with at
most two distinct roots \cite{CHS,HBS02,SJ11}, and
the case where $[K:k]=4$ and $\deg(P(t))=2$ with $P(t)$ irreducible
over $k$ and split over $K$ \cite{BHB11,DSW14}, and the case where $k=\Bbb Q$, $K/\Q$ is arbitrary and $\deg(P(t))=2$ \cite{DSW14}  and the case where $k=\Bbb Q$, $K/\Q$ is arbitrary and $P(t)$ is split over $\Q$ \cite{BM,BMS14,HSW}. Finally, under Schinzel's hypothesis, this conjecture is known for $P(t)$ arbitrary and either $K/k$ cyclic \cite{CTSSD98} or of prime degree and almost abelian \cite{HSW,Wei14}. The integral points of the equation (\ref{eq:norm})  have also been studied in \cite{CWX18,DW17}. See \cite[Example 3.12 and \S 3.3.4]{Wit18} for a more detailed discussion of these results.

\medskip

To compute the Brauer-Manin set of (\ref{eq:norm}), one must compute its unramified Brauer
group. If $[K:k]=2$, a smooth and proper model of (\ref{eq:norm}) can been easily constructed, so we can compute its unramified Brauer group. Using a similar idea in the case $[K:k]=2$, V\'arilly-Alvarado and Viray \cite{VV12,VV15} constructed a smooth and proper model of (\ref{eq:norm}) for the special case when $P(t)$ is a separable polynomial of degree $dn$ or $dn-1$ and $K/k$ is a cyclic field extension of degree $n$. However, in generally, it is hard to construct a smooth and proper model of (\ref{eq:norm}). In 2003, Colliot-Th\'el\`ene, Harari and Skorobogatov \cite{CHS}
produced a partial compactification of (\ref{eq:norm}). For this partial compactification, they gave a formula for its vertical Brauer group and the quotient of its Brauer group
by the vertical Brauer group. Using this formula,  they computed  the
unramified Brauer group for some special cases (e.g., $P(t)$ is split over $k$ with two distinct roots whose multiplicities are relatively prime). However, the Brauer group of the partial compactification is generally larger than the unramified Brauer group (e.g., see \cite[Theorem 3.6 and Proposition 3.7]{Wei14}). Based on Colliot-Th\'el\`ene, Harari and
Skorobogatov's work, the author \cite{Wei14} studied  the case that $P(t)$ is irreducible and $K/k$ is abelian;  Derenthal, Smeets and the author \cite{DSW14} studied the case that $\deg(P(t))=2$ and $[K:k]=4$. 
The problem of producing partial
compactifications of (\ref{eq:norm}) whose Brauer group coincides with the unramified
Brauer group remained open, the goal of this note is to solve this problem
in full generality.

The paper is organized as follows. In Section 1, by blowing-up along some closed subsets of the singular locus, we construct a partial compactification of (\ref{eq:norm}) and prove that its Brauer group is equal to the unramified Brauer group. In Section 2, applying this partial compactification, we give an explicit formula of the unramified Brauer group of the case $P(t)$ irreducible and $K/k$ Galois, furthermore, if either $K$ is the composite of two linearly disjoint fields or the Galois group $\Gal(K/k)$ is abelian or dihedral, we compute the unramified Brauer group.

\bigskip
{\bf Terminology.} Let $k$ be a field of characteristic zero and $\Gamma_k$ its absolute Galois group. Throughout this text, intersections of fields and composites of fields are taken inside the given algebraic closure $\bar k$.
Let~$Z$ be a variety over $k$.
The Brauer group $\Br(Z)=H^2_{\et}(Z,\Bbb G_m)$
contains the \emph{algebraic Brauer group} $\Br_1(Z)=\Ker\big(\Br(Z) \to \Br(Z \otimes_k \bar k)\big)$,
and the subgroup $\Br_0(Z) = \mathrm{Im}\big(\Br(k)\to\Br(Z)\big)$ of constant classes.
When~$Z$ is smooth, we denote $\Br_{\mathrm {un}}(Z) \subset \Br(Z)$ to be
the \emph{unramified Brauer group}, which coincides with the Brauer group of any smooth and proper
model of $Z$.

\bigskip
{\bf Acknowledgements. } I am really grateful to Jean-Louis Colliot-Th\'el\`ene for introducing this problem to me and many valuable suggestions on an earlier version of this paper, to Olivier Wittenberg for useful comments. This work is supported by National Key R$\&$D Program of China.

\section{The construction of partial compactification} \label{sec2}

Let $k$ be a field of characteristic zero. Let $U_1$  be the affine variety  in $ \Bbb A^{n+1}_k$ defined by (\ref{eq:norm}), where $P(t) \in k[t]$ is a polynomial of degree $m$, $K$ is an \'etale $k$-algebra of degree $n$ and $N_{K/k}$ is a norm form for the extension $K/k$.

 We may assume $P(0)\neq 0$, otherwise we  may replace $t$ by $t+a$, here $a\in k$ such that $P(a)\neq 0$. Let $0 \leqslant s < n, s\equiv -m \mod n$. Let $U_2$ be the affine variety defined by the equation
 \begin{equation}\label{eq:2} {t'}^s \tilde P(t')=N_{K/k}({\bf z'}),
 \end{equation}
where $\tilde P(t')={t'}^mP(1/t')$. 
There is a birational map $U_2 \dashrightarrow U_1, (t', {\bf z})\mapsto (\frac{1}{t'}, \frac{{\bf z'}}{t'^j})$ with $j=(m+s)/n$. Let  $U_0$  be the open subvariety of $U_2$ defined by $t'\neq 0$. This birational map yields an open immersion $U_0\longrightarrow U_1$.
We glue $U_1$ and $U_2$ along the open subset $U_0$, then we obtain the variety $U$ with a surjective projection $\pi: U \longrightarrow \mathbb{P}^1$ by $(t,\mathbf z) \mapsto t$. Let $\infty$ be the projection of the point $(0, \mathbf z) \in U_1$, the fiber $\pi^{-1}(\infty)$ is the smooth variety defined by $0\neq P(0)= N_{K/k}(\mathbf z).$

In this whole section, we will always assume $n\mid \text{deg}(P(t))$, otherwise we may replace $P(t)$ by $t^s\tilde P(t), s\equiv -m \mod n, 0\leqslant s < n$, $i.e.$, we replace (\ref{eq:norm}) by (\ref{eq:2}).  The variety $U$ has a surjective projection $\pi : U\to  \mathbb P^1$ which is smooth at the point $t=\infty$.

We write $P(t)=c p_1(t)^{e_1}\cdots p_s(t)^{e_s}$, where $c\neq 0$  and $p_1(t),\cdots, p_s(t)$ are distinct irreducible polynomials. We may assume each $e_i<n$, otherwise we may replace $e_i$ by a positive integer $a$ with $a\equiv e_i \mod n$. Over $\bar k $, the affine variety $\overline U_1$ can be described as an equation of the form
 \begin{equation}\label{eq:kbar}
 	 P(t)=z_1\cdots z_n.
 \end{equation}
The singular locus of $\overline U_1$ is the closed subset $$\bigcup_{e_i>1}\bigcup_{j<\mu}\{(t,z_1,\cdots z_n): p_i(t)=z_j=z_\mu=0\}.$$
%

For every exponent $e_i$ in $P(t)$, define $e'_i=\text{gcd}(e_i,n)$.  For each $e'_i>1$, we define the set of integers
$$\{d\in \mathbb Z: d>1 \text{ and } d\mid e_i'\}.$$
We list these integers in this set by $e_i'=d_{i,1}>d_{i,2}>\cdots> d_{i,t_i}>1$, where $t_i$ is the numbers of factors of $e_i'$ which are larger than $1$ and we denote $d_{i,t_{i+1}}=1$.

We will construct the new variety by blowing-up along some ideal sheaves. By reordering the factors $p_i(t)^{e_i}$ of $P(t)$, we may assume  $$e_i'>1 \text{ if } 1 \leqslant i \leqslant r; e_i'=1 \text{ if } r < i \leqslant s.$$
For $1 \leqslant i \leqslant r$, we denote the closed subset of $X$ (defined over $k$) by
$$W_{i,j} =\bigcup_{1\leq i_1< i_2<\cdots< i_j\leq n}\{p_i(t)=z_{i_1}=z_{i_2}=\cdots=z_{i_j}=0\}.$$
For $i=1$, $d_{1,1}=e_1'$, we blow $V^{(0)}:=U\setminus W_{1,e_1'+1}$ along the ideal sheaf $I_1$ defined by the ideal $$\bigcap_{1\leq i_1< i_2<\cdots< i_{e_1'}\leq n}(p_1(t)^{e_1/e_1'}, z_{i_1},z_{i_2},\cdots,z_{i_{e_1'}}),$$
then we get a variety $\widetilde{V}^{(0)}$.

Over $\bar k$, the ideal sheaf $I_1$  on $V^{(0)}$ is the sum of the ideal sheaves of $$(p_1(t)^{e_1/e'_1},z_{i_1},z_{i_2},\cdots,z_{i_{e_1'}} ),$$ whose support is the disjoint union of $\{t=\eta_j,z_{i_1}=z_{i_2}=\cdots=z_{i_{e_1'}}=0 \}$ with $p_1(\eta_j)=0$.
Since $V^{(0)}$ is geometrically integral, the variety  $\widetilde{V}^{(0)}$ is also geometrically integral by \cite[Chapter 2,Propsition 7.16]{Ha}.
The blowing-up  $\widetilde{V}^{(0)}$ of $V^{(0)}$ along $(p_1(t)^{e_1/e'_1},z_{i_1},z_{i_2},\cdots,z_{i_{e_1'}} )$  is the subvariety of $\Bbb A^{n+1}\times \Bbb P^{e'_1}$ defined by equations (\ref{eq:kbar}) and
$$
	\frac{p_1(t)^{e_1/e'_1}}{t_0}=\frac{z_{i_1}}{t_1}=\cdots= \frac{z_{i_{e_1'}}}{t_{e_1'}},
$$
where $(t,z_{i_1}\cdots,z_{i_{e_1'}})\in \Bbb A^{n+1}$ and $(t_0:t_1:\cdots:t_{e_1'})\in \Bbb P^{e'_1}$. Locally, we may set $t_0=1$, then $z_{i_j}=p_1(t)^{e_1/e'_1} t_j$ for $1\leq j\leq e'_1 $. Then the open subvariety $\{ t_0\neq 0 \}$ of $\widetilde{V}^{(0)}$  is defined by
\begin{equation}\label{eq:t0}
	cp_2(t)^{e_2}\cdots p_s(t)^{e_s}=t_1\cdots t_{e'_1}\prod_{j\not \in\{i_1,i_2,\cdots,i_{e_1'}\}}z_j.
\end{equation}
Let $t=\eta$ be a root of $p_1(t)=0$. Set $t=\eta$ in (\ref{eq:t0}) then we get an exceptional divisor of open subvariety $\{ t_0\neq 0 \}$ of $\widetilde{V}^{(0)}$.
It is clear that the intersection of the exceptional divisors in $\widetilde{V}^{(0)}$ with $\{ t_0=0 \}$ has dimension $n-2$ (codimension 2 in $\widetilde{V}^{(0)}$), hence exceptional divisors in $\widetilde{V}^{(0)}$ are isomorphic to $$\mathbb Z[L_1/k] \otimes \{M: M\subset Z_K,|M|=e_1'\}$$ as $\Gamma_k$-modules, here we denote $L_i=k[t]/(p_i(t))$, $\Z[L_i/k]=\Z[\Gamma_k/\Gamma_{L_i}]$ for any $i$ and $Z_K$ is the disjoint union of $\Gamma_k/\Gamma_{K_i}$ if we write $K=K_1\times K_2\times \cdots\times K_u$ with each $K_i$ is a field.

 Let $V^{(1)}$ be the subvariety of $\widetilde{V}^{(0)}$ obtained by removing  the intersection of the singular locus of $\widetilde{V}^{(0)}$ with its exceptional divisors.  Obviously, $U\setminus W_{1,e_1'}$ can be viewed as an open subvariety of $V^{(1)}$. The intersection of the singular locus of $V^{(1)}$ with $\{p_1(t)=0\}$ is just $ W_{1,2}\setminus W_{1,e'_1}$. Therefore $ W_{1,2}\setminus W_{1,e'_1}$ is a closed subset of $V^{(1)}$. Then $W_{1,d_{1,2}+1}\setminus W_{1,e_1'}$ is also a closed subset of   $V^{(1)}$.  Removing $W_{1,d_{1,2}+1}\setminus W_{1,e_1'}$ in $V^{(1)}$, we get an open subvariety of $V^{(1)}$, which is  denoted by $V_{1,1}$.


Let $I_2$ be the ideal sheaf of $V_{1,1}$ which is uniquely determined by the $\Gamma_k$-invariant ideal $$\bigcap_{i_1< i_2<\cdots< i_{d_{1,2}}}(p_1(t)^{e_1/d_{1,2}}, z_{i_1},z_{i_2},\cdots,z_{i_{d_{1,2}}}),$$ whose support is $W_{1,d_{1,2}}\setminus W_{1,d_{1,2}+1}$.
We blow $V_{1,1}$ along the ideal sheaf $I_2$ and obtain a variety. Similarly, removing  the intersection of the singular locus of this variety with  exceptional divisors, then we get the variety denoting by $V^{(2)}$.

Over $\bar k$, the ideal sheaf $I_2$  on $V^{(1)}$ is also the sum of the ideal sheaves of $$((t-\eta)^{e_1/d_{1,2}},z_{i_1},z_{i_2},\cdots,z_{i_{d_{1,2}}} ),$$ whose support is a disjoint union. By a similar argument as above, the divisor groups generated by exceptional divisors  are isomorphic to $\mathbb Z[L_1/k] \otimes \{M: M\subset Z_K,|M|=d_{1,2}\}$ as $\Gamma_k$-modules.

 Similarly, removing $W_{1,d_{1,3}+1}\setminus W_{1,d_{1,2}}$ in $V^{(2)}$, we get an open subvariety of $V^{(2)}$  denoted by $V_{1,2}$. For $d_{1,3},\cdots, d_{1,t_1}$, we continue the process as above. Finally, we get a variety $V_{1,t_1}$.


Note that $U\setminus \{p_1(t)=0\}$ can be viewed as an open subvariety of $V_{1,t_1}$, therefore, for $i=2,\cdots, r$, $i.e.$, $p_2(t)^{e_2},\cdots, p_r(t)^{e_r}$, we continue the above process of blow-ups similar as $p_1(t)^{e_1}$, finally we get a variety $V_{r,t_r}$. Note that $V_{r,t_r}$ is a smooth variety over $k$ since we remove $W_{i,t_{i+1}+1}=W_{i,2}$ for each $i$.

Denote $U'=U\setminus \{P(t)=0\}$, any nonempty fiber of $\pi: U'\to \mathbb P^1$ is a principal homogeneous space of the torus $T$, here $T$  is defined by the equation $ N_{K/k}({\bf z})=1.$ Let $T^c$ be a fixed $T$-invariant smooth compactification of $T$ (see \cite{CHS1}). Let $U'\times^{T} T^c$ be the contracted product and it has an open subvariety $U'\times^{T} T\cong U'$. By our construction, $V_{r,t_r}$ is a system of blow-ups of $U$ and $U'$ can be viewed as an open subvariety of $V_{r,t_r}$. 
We glue $V_{r,t_r}$ and $U'\times^{T} T^c$ along the isomorphism $U'\times^{T} T\cong U'$, we obtain a new smooth scheme over $k$ which is separated over $k$.
\emph{In the sequel, }
	\begin{equation}\label{def:X'} 
	\text{we shall denote } X' \text{ the variety thus constructed.}
	\end{equation}

 We can also construct a simpler variety than $X'$. In the above blow-up for each $p_i(t)^{e_i}, 1\leq i\leq r$, we only do the first step (and removing its singular locus), i.e., we do not blow-up for the smaller divisors of $e'_i$ than itself. Then we glue the variety similar as $V_{r,t_r}$ and $U'\times^{T} T^c$ along the isomorphism $U'\times^{T} T\cong U'$, we obtain a new smooth scheme over $k$.
\emph{In the sequel},	
 \begin{equation} \label{def:X}
 	\text{we shall denote } X \text{ the variety thus constructed.}
 \end{equation}  It is clear that $X$ is an open subset of $X'$. Such $X$ will be useful when $K/k$ is a Galois field extension. 

Recall that $U_1\subset U$ is the affine variety defined by (\ref{eq:norm}). Let $V$ be the maximal smooth open subset of $U_1$. We can compare $X'$ (or $X$) with the partial compactification $V^{\mathrm{CTHS}}$ in \cite{CHS}.  Let $U_1'=U_1\setminus \{P(t)=0\} $. Colliot-Th\'el\`ene, Harari and
Skorobogatov \cite{CHS}  constructed the partial compactification of (\ref{eq:norm}) by gluing $U_1'\times^{T} T^c$ and $V$ along $U_1'\times^{T} T\cong U_1'$, where $T^c$ is the fixed $T$-invariant smooth compactification of $T$.  They proved that $\bar k[V^{\mathrm{CTHS}}]^\times=\bar k^\times$, $\Pic(\overline {V}^{\mathrm{CTHS}})$ is torsion free and $\Br(\overline {V}^{\mathrm{CTHS}})=0$, see \cite[Proposition 2.3]{CHS}. Obviously $V^{\mathrm{CTHS}}$ is a Zariski open subset of $X$ (or $X'$), hence we have the following lemma.

\begin{lem}\label{lem:constant}
	The groups  $\Pic(\overline X)$ and $\Pic(\overline X')$  are torsion free,  $\bar k[X]^\times=\bar k[X']^\times=\bar k^\times$ and $\Br(\overline X)=\Br(\overline X')=0$.
\end{lem}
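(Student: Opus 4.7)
The plan is to exploit that, as noted just before the statement, $V^{\mathrm{CTHS}}$ sits as a Zariski open subvariety inside both $X$ and $X'$, so that the CHS results $\bar k[\overline{V}^{\mathrm{CTHS}}]^\times=\bar k^\times$, $\Pic(\overline{V}^{\mathrm{CTHS}})$ torsion free, and $\Br(\overline{V}^{\mathrm{CTHS}})=0$ do essentially all the work, provided one can transfer them upward by standard localization/purity sequences. The key input on our side is that, by construction (the singular locus is removed after every blow-up step, and $T^c$ is a smooth compactification of the torus), both $X$ and $X'$ are smooth $k$-varieties, hence $\overline X$ and $\overline{X'}$ are smooth over $\bar k$, so cohomological purity applies. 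Throughout I will carry out the argument for $\overline X$; the case of $\overline{X'}$ is identical.

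For the global units, $\overline{V}^{\mathrm{CTHS}}$ is a dense open in the integral variety $\overline X$, so restriction gives an injection $\bar k[\overline X]^\times \hookrightarrow \bar k[\overline{V}^{\mathrm{CTHS}}]^\times = \bar k^\times$. Combined with the tautological inclusion $\bar k^\times\subset \bar k[\overline X]^\times$, this yields $\bar k[\overline X]^\times=\bar k^\times$. For the Brauer group, since $\overline X$ is smooth over $\bar k$, purity implies that the restriction map to any dense open is injective, giving $\Br(\overline X)\hookrightarrow \Br(\overline{V}^{\mathrm{CTHS}})=0$.

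For the Picard group, I use the standard localization sequence attached to the open immersion $\overline{V}^{\mathrm{CTHS}}\hookrightarrow \overline X$: letting $D$ range over the codimension-one irreducible components of $\overline X\setminus \overline{V}^{\mathrm{CTHS}}$, one has
\[
\bar k[\overline{V}^{\mathrm{CTHS}}]^\times/\bar k^\times \;\longrightarrow\; \bigoplus_{D}\mathbb Z\cdot D \;\longrightarrow\; \Pic(\overline X) \;\longrightarrow\; \Pic(\overline{V}^{\mathrm{CTHS}}) \;\longrightarrow\; 0,
\]
where the possible codimension $\geq 2$ part of the complement is harmless because on a smooth variety it does not affect the Picard group. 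By the units statement already established, the first term vanishes, so the sequence
\[
0 \;\longrightarrow\; \bigoplus_{D}\mathbb Z \;\longrightarrow\; \Pic(\overline X) \;\longrightarrow\; \Pic(\overline{V}^{\mathrm{CTHS}}) \;\longrightarrow\; 0
\]
is short exact with free kernel and, by the CHS result, torsion-free quotient. An extension of a torsion-free group by a free abelian group is torsion free, so $\Pic(\overline X)$ is torsion free.

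The only non-routine point is the appeal to cohomological purity for the Brauer group and the reduction of the Picard sequence to the codimension-one part of the complement; both are standard for smooth varieties in characteristic zero, so the main obstacle is really just making sure that the smoothness assertion built into the construction of $X$ and $X'$ (blowing up followed by systematically removing the new singular loci, and gluing with a smooth compactification $T^c$ of the torus) is correctly invoked. With smoothness in hand the argument is essentially formal and identical for $X$ and for $X'$.
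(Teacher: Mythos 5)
Your proof is correct and follows essentially the same route as the paper, which simply observes that $V^{\mathrm{CTHS}}$ is a dense Zariski open subset of $X$ (resp.\ $X'$) and transfers the three properties from \cite[Proposition 2.3]{CHS}; you have merely written out the standard localization and purity arguments that the paper leaves implicit.
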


We write $K=K_1\times \cdots\times K_u$ where each $K_i$ is a field, recall that $Z_K$ is the disjoint union of $\Gamma_k/\Gamma_{K_i}$.  Now we consider the divisors in $\overline{X'}\setminus \overline {U_1'}$ and $\overline{X}\setminus \overline {U_1'}$:

i) horizontal divisors  which are  divisors of $\Div(\overline {U'}\times^{\overline T} \overline T^c)$ with support out of $\overline {U'}$, we denote it by $\text{Div}_h$. By \cite[Lemma 2.1]{CHS}, $\text{Div}_h$ is isomorphic to $\Div_{\overline T^c\setminus \overline T}( \overline T^c)$ as $\Gamma_k$-modules. 

ii) vertical divisors in $ \overline{U}\setminus \overline{U'_1}$, which is isomorphic to $\mathbb Z[K/k]\otimes \mathbb Z_P\oplus \mathbb Z$, where $\mathbb Z$ comes from the divisor at $\infty$, $\mathbb Z[K/k]:=\mathbb Z[Z_K]$ and $\mathbb Z_P=\oplus_i \Bbb Z[L_i/k]$ where $L_i=k[t]/(p_i(t))$.

iii) exceptional divisors, which is isomorphic to
\begin{equation}\label{S}
S:=\bigoplus_{i=1}^r  \mathbb Z[L_i/k]\otimes\mathbb Z[\Omega_i],
\end{equation}
   where \begin{equation}\label{omega_1}\Omega_i=\bigcup_{d>1,d\mid e_i'}\{M: M\subset Z_K, \# M=d\}
\end{equation} for $\overline{X'}$, and
\begin{equation}\label{omega_2}\Omega_i= \{M: M\subset Z_K, \# M=e'_i\}
\end{equation}
for $\overline{X}$.

Denote
\begin{align}\label{def:D }
&\mathbb D=\mathbb Z_P\otimes \mathbb Z[K/k]\oplus \mathbb Z \oplus S,\\
\notag
&\text{Div}_{\overline{X'}\setminus\overline{U'_1}} (\overline{X'})=\text{Div}_h\oplus \mathbb D.
\end{align}

 Let $N_K=\sum_{\sigma\in Z_K}\sigma$ and $\Upsilon=\sum_{i=1}^{r}\sum_{M\in \Omega_i} M\in S$. We define a morphism $f: \mathbb Z_P \rightarrow \mathbb D $ by sending any $\tau\in \Gamma_k/\Gamma_{L_i}$ to $(\tau\otimes N_K,-1,\tau\otimes\Upsilon)$. Let $\widehat{T'}$ be the quotient of $f$ which is torsion-free.
Note that $\Pic(\overline{U'_1})=0$, we have the following commutative diagram with exact sequences (a similar commutative diagram has already appeared in \cite[Proposition 2.2]{CHS})
 \begin{equation} \label{pic}
\xymatrix{
	 & 0 \ar[d] & 0 \ar[d] & 0 \ar[d] \\
	0\ar[r] & \mathbb Z_P \ar[r]^{f}\ar[d] & \mathbb D \ar[r] \ar[d] & \widehat{T'}   \ar[r]\ar[d] & 0 \\
	0\ar[r] & \bar k[U'_1]^\times/\bar k^\times  \ar[r]^{\text{div}} \ar[d] & \text{Div}_{\overline{X'}\setminus\overline{U'_1}} (\overline{X'}) \ar[r] \ar[d] & \Pic(\overline {X'}) \ar[r]\ar[d]  & 0\\
	0\ar[r] & \widehat{T}  \ar[r]\ar[d] &\text{Div}_h \ar[r] \ar[d] & \Pic(\overline{T^c})   \ar[r]\ar[d]  & 0 \\
	 & 0    & 0  & 0.
}
\end{equation}
In the diagram (\ref{pic}), we may replace $X'$ by $X$.
%

By the diagram (\ref{pic}), we may construct a morphism $\widehat T\rightarrow \widehat T'$, such that  the following diagram  of exact sequences is commutative
\begin{equation}\label{rrr}
\xymatrix {
	0\ar[r] &\widehat {T} \ar[r]\ar[d]^{j}  &\text{Div}_h \ar[r] \ar[d]   &\Pic (\overline{T^c})   \ar[r]\ar@{=}[d] & 0 \\	
	0\ar[r]  &\widehat {T'}  \ar[r] & \Pic (\overline{X'}) \ar[r]  &\Pic (\overline{T^c})  \ar[r]  & 0,
}
\end{equation}
where $j: \widehat {T}\rightarrow \widehat {T'} $ is induced by the map $$\mathbb Z[K/k]\rightarrow \mathbb D, \sigma \longmapsto (-\sum_{i=1}^s e_i N_i \otimes \sigma, m/n,-S_\sigma)$$
where $\sigma\in \Gamma_k/\Gamma_K$, $N_i=\sum_{\tau\in \Gamma_k/\Gamma_{L_i}}\tau $ and $$
S_\sigma =\sum_{i=1}^r N_i\otimes \sum\limits_{d>1,d\mid e_i'}\frac{e_i}{d}\sum\limits_{\tiny\begin{array}{c}
\sigma \in M\subset Z_K
\\ \# M =d
\end{array} } M.$$
If we replace $X'$ by $X$, the diagram (\ref{rrr}) also holds by modifying the definition of $S_\sigma$ by 
$$\sum\limits_{i=1}^r N_i\otimes \frac {e_i}{e_i'}\sum\limits_{\tiny\begin{array}{c}
		\sigma \in M\subset \Gamma_k/\Gamma_K
		\\ \#M =e'_i
\end{array} } M.$$

\begin{lem} \label{lem:seq}
We have exact sequences
\begin{align}\label{seq:main}
	&0\rightarrow   H^1(k,\widehat{T'})/j^*(H^1(k,\widehat{T})) \rightarrow H^1(k,\Pic(\overline{X'}))  \rightarrow \Ker[\Sha^2_\omega(\widehat{T})\rightarrow \Sha^2_\omega(\widehat{T'})]\rightarrow 0;\\	
	\label{seq:main'}
	&0\rightarrow   H^1(k,\widehat{T'})/j^*(H^1(k,\widehat{T})) \rightarrow H^1(k,\Pic(\overline{X}))  \rightarrow \Ker[\Sha^2_\omega(\widehat{T})\rightarrow \Sha^2_\omega(\widehat{T'})]\rightarrow 0.
\end{align}	
\end{lem}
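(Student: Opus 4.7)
The plan is to extract (\ref{seq:main}) by taking Galois cohomology of the two short exact sequences of $\Gamma_k$-lattices displayed in diagram (\ref{rrr}) and performing a diagram chase using the vertical map $j$. I would treat only the case of $X'$; since (\ref{rrr}) holds verbatim for $X$ (with the same top row and a modified $j$), the identical argument produces (\ref{seq:main'}).

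First I would apply cohomology to the top row. Since $\Div_h \cong \Div_{\overline{T^c}\setminus \overline T}(\overline{T^c})$ is a $\Gamma_k$-permutation module, Shapiro's lemma gives $H^1(k,\Div_h)=0$. From this alone, the top-row connecting maps yield a surjection $\partial_0\colon H^0(k,\Pic(\overline{T^c}))\twoheadrightarrow H^1(k,\widehat T)$ and an injection $\delta\colon H^1(k,\Pic(\overline{T^c}))\hookrightarrow H^2(k,\widehat T)$. The key classical input is that $\Pic(\overline{T^c})$ is a flasque $\Gamma_k$-module (Colliot-Th\'el\`ene--Sansuc), which combined with $\Div_h$ being permutation yields the standard identification $\delta\colon H^1(k,\Pic(\overline{T^c}))\xrightarrow{\sim}\Sha^2_\omega(\widehat T)$; concretely, flasqueness forces $H^1(C,\Pic(\overline{T^c}))=0$ for every procyclic $C$, which by naturality places the image of $\delta$ inside $\Sha^2_\omega(\widehat T)$, and the reverse inclusion follows from $H^2(k,\Div_h) \cong \bigoplus_i\Hom_{\mathrm{cont}}(H_i,\BQ/\BZ)$ being detected on procyclic subgroups.

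Next I would apply cohomology to the bottom row of (\ref{rrr}) to obtain the four-term exact sequence
\begin{equation*}
H^1(k,\widehat{T'})\xrightarrow{a}H^1(k,\Pic(\overline{X'}))\xrightarrow{b}H^1(k,\Pic(\overline{T^c}))\xrightarrow{\delta'}H^2(k,\widehat{T'}),
\end{equation*}
which collapses to $0\to H^1(k,\widehat{T'})/\ker(a)\to H^1(k,\Pic(\overline{X'}))\to\ker(\delta')\to 0$. Commutativity of (\ref{rrr}) gives the two naturality identities $\delta'=j^*\circ\delta$ and $\partial_{\mathrm{bot}}=j^*\circ\partial_0$, where $\partial_{\mathrm{bot}}\colon H^0(k,\Pic(\overline{T^c}))\to H^1(k,\widehat{T'})$ is the bottom connecting map. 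The second identity, combined with surjectivity of $\partial_0$, gives $\ker(a)=\Im(\partial_{\mathrm{bot}})=j^*(H^1(k,\widehat T))$. The first identity, combined with the isomorphism $H^1(k,\Pic(\overline{T^c}))\cong\Sha^2_\omega(\widehat T)$ above, identifies $\ker(\delta')$ with $\ker[\Sha^2_\omega(\widehat T)\to H^2(k,\widehat{T'})]$; since restriction to procyclic subgroups commutes with $j^*$, the image of $\Sha^2_\omega(\widehat T)$ under $j^*$ automatically lies in $\Sha^2_\omega(\widehat{T'})$, so this kernel equals $\ker[\Sha^2_\omega(\widehat T)\to\Sha^2_\omega(\widehat{T'})]$. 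Assembling these identifications yields (\ref{seq:main}).

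The only non-routine ingredient is the identification $H^1(k,\Pic(\overline{T^c}))\cong\Sha^2_\omega(\widehat T)$, which rests on the flasqueness of $\Pic(\overline{T^c})$; everything else is a straightforward diagram chase in the commutative ladder provided by diagram (\ref{rrr}).
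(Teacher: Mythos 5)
Your proposal is correct and follows essentially the same route as the paper: both take the long exact cohomology sequences of the two rows of diagram (\ref{rrr}), use that $H^1(k,\mathrm{Div}_h)=0$ because $\mathrm{Div}_h$ is a permutation module, and invoke the identification $H^1(k,\Pic(\overline{T^c}))\cong\Sha^2_\omega(\widehat T)$ to assemble the sequence. You merely spell out the diagram chase and the flasqueness justification that the paper leaves implicit.
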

\begin{proof}By (\ref{rrr}),
we have the commutative diagram
 \begin{equation*} 
\xymatrix{
	H^0(k,\Pic(\overline{T^c})) \ar@{=}[d] \ar[r] &  H^1(k,\widehat{T})\ar[d]^{j^*} \ar[r] &  H^1(k,\text{Div}_h)\ar[r] \ar[d] & H^1(k,\Pic(\overline{T^c})) \ar[r] \ar@{=}[d] & H^2(k,\widehat{T})   \ar[d] \\
	H^0(k,\Pic(\overline{T^c}))  \ar[r] &  H^1(k,\widehat{T'})\ar[r] &  H^1(k,\Pic(\overline{X'})\ar[r]  & H^1(k,\Pic(\overline{T^c})) \ar[r] & H^2(k,\widehat{T'})
}
\end{equation*}
Since $\text{Div}_h$ is a permutation module, one has $H^1(k,\text{Div}_h)=0$. Since $H^1(k,\Pic(\overline{T^c}))\cong \Sha^2_\omega(\widehat{T})$, one obtains the exact sequence (\ref{seq:main}) (similarly for (\ref{seq:main'})).
\end{proof}

\begin{rem} For any field $k\subset F\subset \bar k$, we also have exact sequences
\begin{align}\label{seq:main-ass}
	&0\rightarrow   H^1(F,\widehat{T'})/j^*(H^1(F,\widehat{T})) \rightarrow H^1(F,\Pic(\overline{X'}))  \rightarrow \Ker[\Sha^2_\omega(\widehat{T_F})\rightarrow \Sha^2_\omega(\widehat{T'_F})]\rightarrow 0;\\
	\label{seq:main-ass'}
		&0\rightarrow   H^1(F,\widehat{T'})/j^*(H^1(F,\widehat{T})) \rightarrow H^1(F,\Pic(\overline{X}))  \rightarrow \Ker[\Sha^2_\omega(\widehat{T_F})\rightarrow \Sha^2_\omega(\widehat{T'_F})]\rightarrow 0.
\end{align}
\end{rem}

%
%

Recall that $U_1$ is the affine variety defined by (\ref{eq:norm}) and that $U_1'\subset U_1$ is defined by $P(t)\neq 0$.
We write $P(t)=cp_1(t)^{e_1}\cdots p_{s}(t)^{e_s}$ is a product of irreducible polynomials over $k$.
Let $L_i=k[t]/(P(t))$, $\mathbb Z_P=\oplus_i \mathbb Z[L_i/k]$ and $\eta_i$ the class of $t$ in $L_i$. We define
\begin{equation}\label{def:alpha}
\alpha: \mathbb Z_P \rightarrow \bar k[U_1']^\times,\Gamma_{L_i} \in \mathbb Z[L_i/k] \mapsto t-\eta_i.
\end{equation}
Since $H^2(k,\mathbb Z_P) \cong \oplus_i H^1(L_i,\Q/\Z)$ and $\Br_1(U_1')=H^2(k,\bar k[U_1']^*)$, $\alpha$ induces a morphism $$\alpha':  \oplus_i H^1(L_i,\Q/\Z) \rightarrow \Br_1(U_1').$$  In fact, we do not need to assume that $n\mid \deg(P(t))$ in the above argument and the following lemma.

\begin{lem}
	\label{lem:alpha}
	The map $\alpha'$ sends $(\chi_i)_i \in \oplus_{i=1}^s H^1(L_i,\Bbb Q/\mathbb Z)$
	to $$\sum_{i}\Cor_{L_i/k}(t-\eta_i,\chi_i) \in \Br_1(U_1').$$
\end{lem}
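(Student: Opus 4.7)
The plan is to decompose along the direct sum $\mathbb Z_P = \bigoplus_i \mathbb Z[L_i/k]$, use Shapiro's lemma to translate the computation into the cohomology of each $L_i$, and finally identify the resulting class in $H^2(L_i, \bar k[U_1']^\times)$ with the standard cup-product description of a cyclic algebra.

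Since both $\alpha$ and the isomorphism $H^2(k,\mathbb Z_P) \cong \bigoplus_i H^1(L_i,\mathbb Q/\mathbb Z)$ respect the direct-sum decomposition, it suffices to fix one index $i$ and treat a single $\chi_i$. Shapiro's lemma gives $H^2(k,\mathbb Z[L_i/k]) \cong H^2(L_i,\mathbb Z)$, and the short exact sequence $0 \to \mathbb Z \to \mathbb Q \to \mathbb Q/\mathbb Z \to 0$ together with the vanishing of $H^{\ast}(L_i,\mathbb Q)$ in positive degrees identifies $H^2(L_i,\mathbb Z)$ with $H^1(L_i,\mathbb Q/\mathbb Z)$ via the coboundary $\delta$. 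Under these identifications, $\chi_i$ corresponds to $\delta\chi_i$.

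By Frobenius reciprocity, the restriction $\alpha|_{\mathbb Z[L_i/k]}$ corresponds to the $\Gamma_{L_i}$-equivariant map $\psi_i\colon \mathbb Z \to \bar k[U_1']^\times$ sending $1$ to $t - \eta_i$. The standard naturality of Shapiro's isomorphism (the projection formula identifying cohomology of an induced module with corestriction from the subgroup) then shows that the restriction of $\alpha_{\ast}\colon H^2(k,\mathbb Z_P) \to H^2(k,\bar k[U_1']^\times) = \Br_1(U_1')$ to the $i$-th summand factors as
\[
H^2(L_i,\mathbb Z) \xrightarrow{(\psi_i)_\ast} H^2(L_i,\bar k[U_1']^\times) \xrightarrow{\Cor_{L_i/k}} H^2(k,\bar k[U_1']^\times).
\]

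Finally, the class $(\psi_i)_\ast(\delta\chi_i)$ equals the cup product $(t-\eta_i) \cup \delta\chi_i$, where $t - \eta_i$ is viewed as an element of $H^0(L_i,\bar k[U_1']^\times)$; this is precisely the standard cup-product description of the cyclic algebra class $(t-\eta_i,\chi_i) \in \Br(U_1' \otimes_k L_i)$. Applying $\Cor_{L_i/k}$ and summing over $i$ yields the claimed formula. The main technical point is the projection formula for Shapiro's isomorphism combined with the cup-product description of cyclic algebras; both are standard (see e.g.\ Neukirch--Schmidt--Wingberg), and the only actual work is bookkeeping of conventions.
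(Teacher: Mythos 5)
Your argument is correct and is essentially the paper's proof in Galois-cohomological dress: the paper encodes the same reduction to a single index $i$, the Shapiro identification $H^2(k,\mathbb Z[L_i/k])\cong H^2(L_i,\mathbb Z)\cong H^1(L_i,\Q/\Z)$, the cup product with $t-\eta_i$, and the corestriction in one commutative diagram of \'etale sheaves $\nu_*\Z\to\nu_*\BG_m\to\BG_m$, where $\nu_*$ plays the role of your induced module and the norm map $\nu_*\BG_m\to\BG_m$ realizes $\Cor_{L_i/k}$. The substance --- Frobenius reciprocity identifying $\alpha|_{\Z[L_i/k]}$ with $1\mapsto t-\eta_i$ and the factorization through $(\psi_i)_*$ followed by corestriction --- is the same, so no further comment is needed.
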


\begin{proof} We only need to show $\alpha'(\chi_i)=\Cor_{L_i/k}(t-\eta_i,\chi_i)$ for each $i$.
	Let $f:U_1'\to \Spec(k)$, $f':U_1' \otimes_k L_i \to \Spec(L_i)$ and $\nu:U_1' \otimes_k L_i \to U_1'$ be natural morphisms.
	Let $\gamma:\Bbb Z \to \Bbb G_m$ be the morphism of \'etale sheaves on $U_1' \otimes_k L_i$
	by sending 1 to $t-\eta_i \in L_i[U_1']^\times$.
	Applying the functors $H^2(U_1,-)$ and $H^2(k,f_*-)$ to the morphisms of \'etale sheaves on $U_1'$
	\begin{align*}
		\xymatrix{
			\nu_*\Bbb Z \ar[r]^{\nu_*\gamma} & \nu_*\Bbb G_m \ar[r]^{N_{L/k}} & \Bbb G_m\rlap{,}
		}
	\end{align*}
	we obtain a commutative diagram
	\begin{align*}
		\begin{aligned}
			\xymatrix@C=2.4em@R=3ex{
				\ar@{=}[d] H^2(k,f_*\nu_*\Bbb Z) \ar[r] & H^2(k,f_*\nu_*\Bbb G_m) \ar[r] & H^2(k,f_*\Bbb G_m) \ar@{=}[d] \\
				*!<3.35em,0ex>\entrybox{H^1(L_i,\Bbb Q/\Bbb Z)=H^2(L_i,\Bbb Z)}
				\ar[d]_{f'^*} \ar[rr]^{\Res(\alpha')} &&
				H^2(k,\bar k[U_1']^*)
				\ar[d] \\
				H^2(U_1' \otimes_k L_i,\Bbb Z) \ar[r]^(.52){\chi \mkern3mu\mapsto\mkern3mu (t-\eta_i) \cup \chi} &
				\Br(U_1' \otimes_k L_i) \ar[r]^(.56){\Cor_{L_i/k}} & \Br(U_1')\rlap.
			}
		\end{aligned}
	\end{align*}
	The lemma then follows from the commutativity of this diagram.
\end{proof}

Note that $\Br_1(X')/\Br_0(X')\cong H^1(k,\Pic(X'))$, hence (\ref{seq:main}) gives the morphism $$\beta:H^1(k,\widehat T') \rightarrow \Br_1(X')/\Br_0(X').$$
We denote $$\Psi_i:=\Ker[H^1(L_i,\Q/\mathbb Z)\rightarrow  H^1(L_i\otimes K,\Q/\mathbb Z)\oplus H^1(F,\mathbb Z)\oplus H^2(F,S\otimes \Q/\Z)].$$
By the first row of the diagram (\ref{pic}), one obtains
\begin{equation}\label{key}	
	H^1(k,\widehat T') \cong \Ker[H^2(k,\mathbb Z_P)\rightarrow H^2(k,\mathbb Z_P\otimes\mathbb Z[K/k]\oplus\mathbb Z\oplus S)] \cong\oplus_{i=1}^s \Psi_i.
\end{equation}
Therefore, $\beta$ induces the map $\beta':\oplus_{i=1}^s \Psi_i \rightarrow \Br_1(X')/\Br_0(X').$ In fact, the remark for  \cite[Proposition 2.5]{CHS}  gives the following explicit description of $\beta'$ and here we provide a short proof.

\begin{lem} \label{lem:rem}
	$\beta'$ is defined by send any $(\chi_i)_i\in \oplus_{i=1}^s \Psi_i$ to $$\sum_{i}\Cor_{L_i/k}(t-\eta_i,\chi_i) \in \Br_1(X').$$
\end{lem}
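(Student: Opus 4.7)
The plan is to use the commutative diagram \eqref{pic} together with naturality of the connecting homomorphism in Galois cohomology, reducing the computation of $\beta'$ to the formula already obtained for $\alpha'$ in \autoref{lem:alpha}. Recall that $\beta$ is the map $H^1(k,\widehat{T'}) \to H^1(k,\Pic(\overline{X'}))$ coming from the third column of \eqref{pic}, and that \eqref{key} identifies $H^1(k,\widehat{T'})$ with $\oplus_i \Psi_i$ via the connecting homomorphism $\delta_1 : H^1(k,\widehat{T'}) \to H^2(k,\mathbb{Z}_P)$ from the top row.

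First I would check two injectivity statements that let us carry the computation into an ambient group of Brauer classes. Since $\mathbb{D} = \mathbb{Z}_P \otimes \mathbb{Z}[K/k] \oplus \mathbb{Z} \oplus S$ is a permutation $\Gamma_k$-module, $H^1(k,\mathbb{D}) = 0$, so $\delta_1$ is injective. Similarly $\text{Div}_h$ is a permutation module by \cite[Lemma 2.1]{CHS}, so $H^1(k,\text{Div}_{\overline{X'}\setminus\overline{U'_1}}(\overline{X'})) = 0$, and the connecting map $\delta_2: H^1(k,\Pic(\overline{X'})) \hookrightarrow H^2(k,\bar k[U'_1]^\times/\bar k^\times)$ from the middle row is likewise injective. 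Naturality of $\delta$ applied to the morphism of short exact sequences formed by the top two rows of \eqref{pic}, whose left vertical component is the map $\alpha$ of \eqref{def:alpha}, then yields the commutative square
\[
\xymatrix{
H^1(k,\widehat{T'}) \ar[r]^{\beta} \ar[d]_{\delta_1} & H^1(k,\Pic(\overline{X'})) \ar[d]^{\delta_2} \\
H^2(k,\mathbb{Z}_P) \ar[r]^{\alpha_*} & H^2(k,\bar k[U'_1]^\times/\bar k^\times).
}
\]
By \autoref{lem:alpha}, the bottom horizontal map sends $(\chi_i)_i$ to the class of $\sum_i \Cor_{L_i/k}(t-\eta_i,\chi_i)$ in $\Br_1(U'_1)/\Br_0(U'_1) \subset H^2(k,\bar k[U'_1]^\times/\bar k^\times)$, so commutativity forces $\delta_2(\beta((\chi_i)_i)) = \sum_i \Cor_{L_i/k}(t-\eta_i,\chi_i)$ in $\Br_1(U'_1)/\Br_0(U'_1)$.

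Finally, since $X'$ is smooth and $U'_1 \subset X'$ is dense open, purity yields $\Br(X') \hookrightarrow \Br(U'_1)$ and hence $\Br_1(X')/\Br_0(X') \hookrightarrow \Br_1(U'_1)/\Br_0(U'_1)$; combined with injectivity of $\delta_2$ this pins down $\beta'((\chi_i)_i)$ uniquely by the asserted formula. The main subtle point I expect is to verify that $\sum_i \Cor_{L_i/k}(t-\eta_i,\chi_i)$ actually lies in $\Br_1(X')$, not merely in $\Br_1(U'_1)$ --- that is, that it is unramified along each of the three families of divisors in $X' \setminus U'_1$: the vertical divisors $\{p_i(t)=0\}$, the divisor at $t=\infty$, and the exceptional divisors from the blow-ups. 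This will follow by a residue computation at each such divisor, with the three vanishing conditions defining $\Psi_i$ tailored precisely to kill the corresponding residues, which is exactly why the image of $\delta_1$ lands inside $\oplus_i \Psi_i$ in \eqref{key}.
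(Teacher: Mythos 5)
Your argument is correct and is essentially the paper's own proof: both rest on the commutative square obtained from the top two rows of (\ref{pic}) (yours is the transpose of the paper's diagram (\ref{diag: h1pic})) together with Lemma \ref{lem:alpha}. The one point you gloss over is the compatibility of $\delta_2$ with the Hochschild--Serre identification $H^1(k,\Pic(\overline{X'}))\cong \Br_1(X')/\Br_0(X')$ and with restriction to $U_1'$, which the paper settles via purity and a citation of Lichtenbaum; your closing worry about residues is superfluous, since the commutativity of the square already forces the class to come from $\Br_1(X')$.
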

\begin{proof}
For any field $k$ with $char(k)=0$ and any smooth variety $Z$,  the classical exact sequence $$0\rightarrow \bar k(Z)^*/\bar k \rightarrow \Div(\overline Z) \rightarrow \Pic(\overline Z)\rightarrow 0$$
gives an exact sequence \begin{equation}\label{seq:pic}
0\rightarrow H^1(k, \Pic(\overline Z))\rightarrow H^2(k,\bar k(Z)^*/\bar k) \rightarrow H^2(k,\Div(\overline Z)).
\end{equation}
Using Grothendieck's purity theorem (cf.  \cite[Theorem 1.3.2]{CTSD94}), (\ref{seq:pic}) induces the natural map $ H^1(k,\Pic(\overline Z)) \rightarrow\Br_1(Z)/\Br_0(Z)$ which can be viewed as the inverse map in Hochschild-Serre's spectral sequence by the observation 3 of \cite[Section 2, p. 122]{Li69}.

The top two rows of the diagram (\ref{pic}) give  the commutative diagram
\begin{equation} \label{diag: h1pic}
\xymatrix{
	&H^1(k,\widehat{T'}) \ar[d] \ar[r] &  H^2(k,\Z_P) \ar[d] \\
	 &H^1(k,\Pic(\overline{X'}))  \ar[r] & H^2(k,\bar k[U_1']^\times/\bar k),
}
\end{equation}
where the right-side vertical map is induced by $\alpha$ in (\ref{def:alpha}), the lemma then follows from the commutativity of the diagram (\ref{diag: h1pic}) and Lemma \ref{lem:alpha}.	
\end{proof}
\begin{rem}
If we replace $X'$ by $X$, by the same  arguments as above, Lemma \ref{lem:rem} also holds  for $X$.
\end{rem}

Now we can prove the main theorem of this article.
\begin{thm} \label{thm:main}
	The Brauer group
	$$\Br_1(X')=\Br(X')=\Br_\mathrm{un}(X').$$
	If $K/k$ is a Galois field extension, $$\Br_1(X)=\Br(X)=\Br_\mathrm{un}(X).$$
\end{thm}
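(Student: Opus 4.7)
The equality $\Br_1 = \Br$ is immediate from $\Br(\overline{X'}) = 0$ (resp.\ $\Br(\overline{X}) = 0$) in Lemma~\ref{lem:constant}, and $\Br_{\mathrm{un}}(X') \subseteq \Br(X')$ follows from the smoothness of $X'$. The substantive content is therefore the reverse inclusion $\Br(X') \subseteq \Br_{\mathrm{un}}(X')$ (and its analogue for $X$ under the Galois assumption). The plan is to embed $X'$ as a dense open subvariety of a smooth proper $k$-variety $Y$ (by Hironaka), so that $\Br_{\mathrm{un}}(X') = \Br(Y)$, and then appeal to Grothendieck's purity to reduce the problem to showing that for every codimension-$1$ component $D$ of $Y \setminus X'$ and every $A \in \Br(X')$, the residue $\partial_D(A) \in H^1(k(D), \mathbb Q/\mathbb Z)$ vanishes. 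Since $\Br_0(X')$ extends freely to $Y$, it suffices to handle representatives of $H^1(k,\Pic(\overline{X'})) \cong \Br_1(X')/\Br_0(X')$.

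Using Lemma~\ref{lem:seq}, I would split $H^1(k, \Pic(\overline{X'}))$ into its piece $H^1(k,\widehat{T'})/j^*H^1(k,\widehat{T})$, for which Lemma~\ref{lem:rem} provides explicit lifts $\sum_i \Cor_{L_i/k}(t - \eta_i, \chi_i)$ with $(\chi_i) \in \bigoplus_i \Psi_i$, and its piece $\ker[\Sha^2_\omega(\widehat{T}) \to \Sha^2_\omega(\widehat{T'})]$, whose elements pull back from $\Pic(\overline{T^c})$ through the diagram~(\ref{rrr}) and are therefore represented by Brauer classes already unramified on the smooth proper torus compactification $T^c$. The boundary components $D \subseteq Y \setminus X'$ lie over $\{P(t) = 0\} \subseteq \mathbb{P}^1_t$ (the fibre at $\infty$ is already smooth in $X'$, and the $T$-torsor fibres over $\mathbb{P}^1_t \setminus \{P(t)=0\}$ are already compactified in $X'$ via $U' \times^T T^c$), and they come in two types: (a) strict transforms of the vertical components $\{t=\eta,\, z_j=0\}$ for $\eta$ a root of some $p_i(t)$, and (b) exceptional divisors from completing the resolution beyond the blow-up tower already performed in the construction of $X'$.

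For residues of type~(a), functoriality of corestriction under the residue map, combined with the vanishing $\chi_i|_{L_i \otimes K} = 0$ built into $\Psi_i$, will force $\partial_D = 0$. For residues of type~(b), I would compute locally on the relevant blow-up chart, for example~(\ref{eq:t0}), where $t - \eta$ factors as a unit times $\xi^{d/e'_i}$ for $\xi$ the local parameter cutting out the exceptional divisor; $\partial_D$ is then expressed in terms of the vanishing conditions in $H^1(F, \mathbb Z)$ and $H^2(F, S \otimes \mathbb Q/\mathbb Z)$ defining $\Psi_i$ via~(\ref{key}). The main obstacle will be handling type~(b) coherently across the whole tower of blow-ups defining $X'$, tracking $t - \eta$ layer by layer through the divisors $d_{i,1} > d_{i,2} > \cdots > d_{i,t_i}$ of $e'_i$ and matching each level with the corresponding summand of $S$ in~(\ref{S}). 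For the variant $X$ only the top-level ($d = e'_i$) blow-up is performed, and the Galois hypothesis on $K/k$ is what will guarantee that the simpler module $\Omega_i = \{M \subset Z_K : |M| = e'_i\}$ still suffices: under this hypothesis $K$ is a field and the transitive $\Gamma_k$-action on $Z_K$ factors through $\Gal(K/k)$, collapsing the combinatorics of the tower so that the intermediate-level residues become formal consequences of the top one.
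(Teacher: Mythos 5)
Your reduction to checking residues, and your use of Lemmas~\ref{lem:constant}, \ref{lem:seq} and \ref{lem:rem}, match the paper's starting point, but there is a genuine gap at the heart of the argument: the piece $\Ker[\Sha^2_\omega(\widehat{T})\rightarrow \Sha^2_\omega(\widehat{T'})]$ is not actually handled. That a class of $H^1(k,\Pic(\overline{X'}))$ maps onto something in $H^1(k,\Pic(\overline{T^c}))$ is a statement about the generic fibre of $X'\to\mathbb P^1$ only; it says nothing about the residues of a representing Brauer class along the vertical and exceptional divisors of a smooth proper model lying over the roots of $P(t)$, and those residues are exactly what must be shown to vanish. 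This is precisely the phenomenon that makes $\Br(V^{\mathrm{CTHS}})$ strictly larger than $\Br_{\mathrm{un}}$ in general (cf.\ Example~\ref{lem:3.1}), so no soft ``it comes from $T^c$'' argument can work. Moreover, even for the explicit piece, your divisibility computation of $v(t-\eta_{i_0})$ via the norm equation requires $K\otimes_k F_{i_0}$ to split into \emph{cyclic} extensions, which is false over $k$ for a general \'etale algebra $K$.

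The paper closes both gaps with one device your proposal is missing: to prove $\partial_A(\mathcal B)=0$ it suffices to prove $\partial_A(\mathcal B)(g)=0$ for each $g\in\Gal(\bar\kappa_A/\kappa_A)$ separately, and for fixed $g$ one base-changes to $F=\bar k^{\,g}$, over which $\Gal(\bar k/F)$ is pro-cyclic. Then $\Sha^2_\omega(\widehat T_F)=0$, so by (\ref{seq:main-ass}) the troublesome kernel disappears and \emph{every} class of $\Br_1(X'_F)/\Br_0(X'_F)$ becomes an explicit $\sum_i\Cor_{F_i/F}(t-\eta_i,\chi_i)$; furthermore $K\otimes_k F_{i_0}$ does decompose into cyclic extensions, so the norm equation forces the divisibility of $v_{A'_{i_0}}(t-\eta_{i_0})$ which, combined with the constraints coming from the summand $S$ in (\ref{key}), kills the residue. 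Note also that the paper runs over \emph{all} discrete valuation rings of $k(X')$ containing $k$, using Grothendieck's purity in the form $\Br_{\mathrm{un}}(X')=\bigcap_A\Ker(\partial_A)$, so no smooth compactification $Y$ via Hironaka and no layer-by-layer analysis of the blow-up tower is needed. Without the passage to $F$, your plan cannot be completed as written.
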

\begin{proof}
	Since $\Br(\overline{X})=\Br(\overline{X'})=0$ by Lemma \ref{lem:constant}, we have $\Br_1(X)=\Br(X)$ and $\Br_1(X')=\Br(X')$.
	
	 Let $A$ be a discrete valuation ring containing $k$, and with the fraction field $k(X')$ and the residue field $\kappa_A$. Let  $\partial_A: \Br(k(X'))\rightarrow
	H^1(\kappa_A,\mathbb Q/\mathbb Z)$ be the residue map. By Grothendieck's purity theorem (cf. \cite[Theorem 1.3.2]{CTSD94}), we have
	$$\Br_\mathrm{un}(X')=\bigcap_{A}\Ker(\partial_A)\subset \Br(k(X')),$$
	where $A$ runs through all discrete valuation rings as above. Therefore,	for any $\mathcal B \in \Br_1(X')$, we want to show  $\mathcal B \in \Br_\mathrm{un}(X')$, it suffices to prove the triviality of
	$\partial_A(B)$ for any such discrete valuation ring $A$, $i.e.$, $\partial_A(B)(g)=0$ for any $g \in
	\Gal(\bar \kappa_A/\kappa_A)$.
	
	We extend the embedding $k \subset
	\kappa_A$ to an embedding $\bar k \subset \bar \kappa_A$, then $g$
	acts on $\bar k$. Let $F=\bar k^g$, then $\Gal(\bar k/F)$ is pro-cyclic.
	
	Since $X'$ is geometrically integral and $char(k)=0$, the completion of
	$k(X')$ for the given valuation is isomorphic to $\kappa_A((\pi))$,
	where $\pi$ is a uniformizer.  Considering the valuation given by $\pi$
	on $(F. \kappa_A)((\pi))$ and using $F(X') = k(X) \otimes_k
	F$, this defines a discrete valuation on $F(X')$ with valuation ring $A_{F}$ whose residue field
	$\kappa_{A_{F}}=\kappa_A . F$, hence $g\in \Gal(\bar \kappa_A/\kappa_{A_{F}})$, in order to show $\partial_A(B)(g)=0$, we only need to show $\partial_{A_{F}}(B)(g)=0$.
	
	Over $F$, we have $\Sha^2_\omega(\widehat T_{F})=0$ since $\bar k/F$ is pro-cyclic by the definition of $F$. By (\ref{seq:main-ass}), we have
	\begin{equation}\label{eq:Br}
		\Br_1(X'_{F})/\Br_0(X'_{F})\cong H^1(F,\Pic(\overline{X'}))\cong H^1(F,\widehat{T'})/j^*(H^1(F,\widehat{T})). 
	\end{equation}
	
	Let $P(t)=cq_1(t)^{f_1}\cdots q_{s'}(t)^{f_{s'}}$ be a product of irreducible polynomials over $F$ and let $F_i=F[t]/(q_i(t))$, then
	\begin{align}
		& H^1(F,\widehat{T'})\cong \Ker[H^2(F,\mathbb Z_P)\rightarrow H^2(F,\mathbb Z_P\otimes\mathbb Z[K/k]\oplus \mathbb Z\oplus S))] \notag \\
\label{ker}
&\cong\Ker[\oplus_{i=1}^{s'}H^1(F_i,\Q/\mathbb Z)\rightarrow  H^1(F\otimes K,\Q/\mathbb Z)\oplus H^1(F,\Q/\mathbb Z)\oplus H^1(F,S\otimes \Q/\Z)].
		\end{align}

	Let $(\chi_i)_i \in \oplus_i H^1(F_i,\Q/\mathbb Z)$ be contained in the kernel (\ref{ker}), then we have $\sum_i\text{Cor}_{F_i/F}(\chi_i)=0$. By Lemma \ref{lem:rem} (replacing $k$ by $F$), such $(\chi_i)_i$ gives an element $\mathcal B=\sum_i \text{Cor}_{F_i/F}(t-\eta_i,\chi_i)\in \Br(X')$, we only need to show the triviality of the residue of $\mathcal B$ by (\ref{eq:Br}).

	Let $A'$ be a discrete valuation of $F(X')$. Assume the valuation $v_{A'}(t)=l<0$, then $v_{A'}(q_i(t))=l \deg(q_i(t))$. Therefore, the residue $$\partial_{A'}(\mathcal B)=\sum_i \partial_{A'}(\text{Cor}_{F_i/F}(\frac{t-\eta_i}{t},\chi_i))+\sum_i\partial_{A'}(\text{Cor}_{F_i/F}(t,\chi_i))=0+l \sum_i \text{Cor}_{F_i/F}(\chi_i)=0.$$
	
	Assume $v_{A'}(t)\geq 0$. If all $v_{A'}(q_i(t))=0$, obviously the residue $\partial_{A'}(\mathcal B)=0.$  So we only need to consider the case  $v_{A'}(q_i(t))>0$ for some $i=i_0$. Since all $q_i(t)$ are relative prime, one obtains
	$v_{A'}(q_j(t))=0$ for any $j\neq i_0$. Therefore $$\partial_{A'}(\mathcal B)=\partial_{A'}(\text{Cor}_{F_{i_0}/F}(t-\eta_{i_0},\chi_{i_0})).$$
 Let $A'_{i_0}$ be a discrete valuation of $F_{i_0}(X')$ such that $\text{ord}_{A'_{i_0}}(t-\eta_{i_0})\neq 0$ and which restricted to $F (X')$ is just $A'$.
	 Hence $$\partial_{A'}(\mathcal B)=v_{A'_{i_0}}(t-\eta_{i_0})\chi_{i_0}.$$
Let  the exponent of $q_{i_0}(t)$ in $P(t)$ is $e_i$, $i.e.$, $q_{i_0}(t)\mid p_i(t)$.
	
	We write \begin{equation}\label{field} K\otimes_k F_{i_0}\cong K'_1\oplus\cdots \oplus K'_{m'},
	\end{equation}
	 where all $K'_j/F_{i_0}$ are cyclic field extensions which are contained in a cyclic extension. By the equation $P(t)=N_{K'_1/F_{i_0}}(\mathbf z_1)\cdots N_{K'_{m'}/F_{i_0}}(\mathbf z_{m'})$, one obtains $\frac{\gcd(n'_1,\cdots, n'_{m'})}{\gcd(e_i, n'_1,\cdots, n'_{m'})}|v(t-\eta_{i_0})$ since  $v_{A'_{i_0}}(N_{K'_i/F_{i_0}}(\mathbf z_{m'}))$ is divided by $n'_i$,  where $n'_i=[K'_i:F_{i_0}]$ . In the following, we only need to show that the order of $\chi_{i_0}$ divides $\frac{\gcd(n'_1,\cdots, n'_{m'})}{\gcd(e_i, n'_1,\cdots, n'_{m'})}$, which implies $\partial_A(\mathcal B)=0$. If we replace $X'$ by $X$, the above argument also holds.
	
	(1) The case $X'$:
	
	Let $d=(e_i, n'_1,\cdots, n'_{m'})$ be a factor of $e'_i=(e_i,n)$. For any $1\leq j\leq m'$, let $H_j$ be the subgroup of order $d$ of the cyclic group $\Gal(K'_j/F_{i_0})$. Then $H_j$ gives an element $\gamma_j$ of $\Omega_i$ (see (\ref{omega_1}) for definition), the $\gamma_j$ generates a permutation $\Gal(K_j/F_{i_0})$-module which is isomorphic to $\mathbb Z[G_j/H_j]$ and is a direct summand of $\mathbb Z[\Omega_i]$, where $G_j=\Gal(K'_j/F_{i_0})$. By  (\ref{ker}), the order of $\chi_{i_0}$ is a factor of $n'_j/d $. When $j$ runs through $1,2, \cdots, m' $, the order of $\chi_{i_0}$ is a factor of $\gcd(n'_1,\cdots,n'_{m'})/d $. Then we complete the proof for $X'$.
	
	(2) The case $X$ (here $K/k$ is a Galois field extension):
	
	Since $K/k$ is  Galois, all $K'_i$ in (\ref{field}) are isomorphic, hence $n':=n'_1=\cdots=n'_{m'}$. Let $d=:\gcd(e_i, n'_1,\cdots, n'_{m'})=\gcd(e_i,n')=\gcd(e'_i,n')$.  Let $H_i$ be the unique subgroup of order $d$ in $\Gal(K'_i/F_{i_0})$ for $1\leq i \leq r$. Let $r'_i:=e'_i/d=e'_i/\gcd(e'_i,n') $, $r'_i$ is a factor of $n/n'$, hence $r'_i \leq m'$. Then the disjoint union $\cup_{j=1}^{r'_i} H_i\subset \Gamma_k/\Gamma_K$ gives an element $\gamma$ of $\Omega_i$ (see (\ref{omega_2})), the $\gamma$ generates a permutation $G'$-module which is isomorphic to $\mathbb Z[G'/H']$  and it is a direct summand of $\mathbb Z[\Omega_i]$, where $G'\cong\Gal(K'_1/F_{i_0})\cong \cdots \cong \Gal(K'_{m'}/F_{i_0})$ and $H'\cong H_1\cong \cdots \cong H_m$. By the definition of  (\ref{ker}), the order of $\chi_{i_0}$ is a factor of $n'/d $. Then we complete the proof for $X$. \qedhere

\end{proof}

\begin{rem}
	In fact, for any field $L$ over $k$, by similar arguments as above we can show $$\Br_1(X'_L)=\Br(X'_L)=\Br_\mathrm{un}(X'_L);$$	
	if $K/k$ is a Galois field extension, $$\Br_1(X_L)=\Br(X_L)=\Br_\mathrm{un}(X_L).$$
\end{rem}

Combining Lemma \ref{lem:seq} with Theorem \ref{thm:main}, we have the following corollary.
\begin{cor}	\label{cor:seq}
We have the exact sequence
	\begin{align}\label{seq:main2}
		0\rightarrow   H^1(k,\widehat{T'})/j^*(H^1(k,\widehat{T})) \rightarrow \Br_\mathrm{un}(X')/\Br_0(X')  \rightarrow \Ker[\Sha^2_\omega(\widehat{T})\rightarrow \Sha^2_\omega(\widehat{T'})]\rightarrow 0.
	\end{align}	
			If $K/k$ is a Galois field extension, we have the exact sequence
	\begin{align}\label{seq:Br-Galois}
		0\rightarrow   H^1(k,\widehat{T'})/j^*(H^1(k,\widehat{T})) \rightarrow \Br_\mathrm{un}(X)/\Br_0(X)  \rightarrow \Ker[\Sha^2_\omega(\widehat{T})\rightarrow \Sha^2_\omega(\widehat{T'})]\rightarrow 0.
	\end{align}
\end{cor}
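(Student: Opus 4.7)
The corollary is a formal consequence of what has already been established. The plan is to replace the middle term $H^1(k,\Pic(\overline{X'}))$ in the exact sequence (\ref{seq:main}) of Lemma~\ref{lem:seq} by $\Br_{\mathrm{un}}(X')/\Br_0(X')$, combining Theorem~\ref{thm:main} with a standard Hochschild--Serre computation.

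First I would invoke Lemma~\ref{lem:constant}, which gives $\bar k[X']^\times = \bar k^\times$ (and $\Br(\overline{X'})=0$). Together with Hilbert~90, the low-degree terms of the Hochschild--Serre spectral sequence
$$H^p(k,H^q_{\et}(\overline{X'},\mathbb{G}_m))\Rightarrow H^{p+q}_{\et}(X',\mathbb{G}_m)$$
produce a canonical injection $\Br_1(X')/\Br_0(X')\hookrightarrow H^1(k,\Pic(\overline{X'}))$. As already noted in the paragraph preceding Lemma~\ref{lem:rem}, this injection is in fact an isomorphism: the obstruction to surjectivity lies in $H^3(k,\bar k^\times)$ and is killed by the smooth fiber $\pi^{-1}(\infty)$, which carries a natural $k$-point structure as a $T$-torsor. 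The map $\beta$ in Lemma~\ref{lem:rem} is then identified with the composition of the inverse of this iso with the arrow appearing in (\ref{seq:main}).

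Second, I would apply Theorem~\ref{thm:main} to replace $\Br_1(X')$ by $\Br_{\mathrm{un}}(X')$, obtaining $\Br_{\mathrm{un}}(X')/\Br_0(X')\cong H^1(k,\Pic(\overline{X'}))$. Substituting this identification into (\ref{seq:main}) of Lemma~\ref{lem:seq} yields (\ref{seq:main2}). The Galois case (\ref{seq:Br-Galois}) is obtained by the exact same substitution, this time using the second sequence (\ref{seq:main'}) of Lemma~\ref{lem:seq} together with the second assertion of Theorem~\ref{thm:main}. Since every ingredient is already in hand, no step presents a genuine obstacle; the only substantive input, namely the equality $\Br_1(X')=\Br_{\mathrm{un}}(X')$, is precisely Theorem~\ref{thm:main}, and the corollary amounts to a piece of bookkeeping.
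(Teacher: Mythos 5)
Your proof follows the paper's route exactly: the paper obtains the corollary by combining Lemma~\ref{lem:seq} with Theorem~\ref{thm:main}, using the identification $\Br_1(X')/\Br_0(X')\cong H^1(k,\Pic(\overline{X'}))$ stated just before Lemma~\ref{lem:rem} (and likewise for $X$), so the substance of your argument is the intended one. One caveat: your justification for why the injection $\Br_1(X')/\Br_0(X')\hookrightarrow H^1(k,\Pic(\overline{X'}))$ is surjective is not correct as stated. The fibre $\pi^{-1}(\infty)$ is the $T$-torsor $c=N_{K/k}(\mathbf z)$ for a nonzero constant $c$; this torsor is trivial only when $c$ lies in $N_{K/k}(K^\times)$, so it does not in general carry a $k$-point, and hence cannot be used to kill the obstruction in $H^3(k,\bar k^\times)$ over an arbitrary field of characteristic zero. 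The paper instead produces the inverse map $H^1(k,\Pic(\overline{X'}))\to \Br_1(X')/\Br_0(X')$ from the exact sequence $0\to \bar k(X')^\times/\bar k^\times\to \Div(\overline{X'})\to \Pic(\overline{X'})\to 0$ together with Grothendieck's purity theorem, identifying it with the Hochschild--Serre map via \cite{Li69} (see the proof of Lemma~\ref{lem:rem}). Replacing your $k$-point claim by that argument, the remainder of your proof is exactly the paper's bookkeeping.
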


\begin{exam}\label{lem:3.1} Let $X$ be the variety (\ref{def:X}) defined by the equation $P_1(t)P_2(t)=N_{K/k}(\mathbf{z})$, where $P_1(t)$ and $P_2(t)$ are irreducible and of degree 2. Let $L_i=k[t]/(P_i(t))$. Suppose that $L_1\neq L_2$ and $K=L_1.L_2$. Then $$\Br_\mathrm{un}(X)=\Br_0(X),\text{ but } \Br(V^\mathrm{CTHS})/\Br_0(V^\mathrm{CTHS})=\Z/2.$$
\end{exam}

\begin{proof} It is clear that  $\Br(V^\mathrm{CTHS})/\Br_0(V^\mathrm{CTHS})=\Z/2$ by \cite[Propsition 2.5]{CHS}.

	Since $P_1(t)P_2(t)$ is separable and of degree $[K:k]=4$, we have $S=0$ in (\ref{S}).
	Let $H_i =\Gal(L_i/k), i=1,2$ and $G=\Gal(K/k)=H_1\times H_2$. By Corollary \ref{cor:seq}, we only need to show that this map $\Sha^2_\omega(\widehat{T})\rightarrow \Sha^2_\omega(\widehat{T'})$ is injective. Since $\widehat{T}$ and $\widehat{T'}$ is split by $K$, it suffices to show that  $H^2(G,\widehat{T}) \to H^2(G,\widehat{T'})$ is injective. By the Hochschild-Serre spectral sequence $H^i(H_1,H^j(H_2, -))\Rightarrow H^{i+j}(G,-)$, since $E_2^{2,0}(\widehat{T})=E_2^{0,2}(\widehat{T})=0$, we have the commutative diagram
	\begin{equation*}
		\xymatrix{
			&H^2(G,\widehat{T})\ \ \  \ar[r]  \ar[d]  &\ \   H^1(H_1,H^1(H_2,\widehat{T}))\ar[d]^g \\ &H^2(G,\widehat{T'})\ \ \ \ar[r] &   H^1(H_1,H^1(H_2,\widehat{T'}))
		}
	\end{equation*}
	and the first horizontal map is injective. It is easy to show that $H^1(H_2,\widehat{T})\cong H^1(H_2,\widehat{T'})\cong \Bbb Z/2$, which implies that $g$ is injective. Therefore $H^2(G,\widehat{T}) \to H^2(G,\widehat T')$ is injective.
\end{proof}

If $k= \Q$, $K/\Q$ is an arbitrary extension of number fields and $P(t)$ is split over $\Q$, then the Brauer-Manin obstruction controls the rational points of  (\ref{eq:norm}) entirely (see \cite{BM,BMS14,HSW}). The following result gives a simple description of the Brauer group in this direction.

\begin{exam}\label{exam2} Let $X$ be the variety (\ref{def:X}) defined by the equation (\ref{eq:norm}), where $K/k$ is a finite Galois field extension of degree $n$ and $P(t)=c(t-a_1)^{e_1}\cdots (t-a_s)^{e_s}$ with all $e_i\in k$ are distinct. Let $e_i'=\mathrm{gcd}(e_i,n)$. Suppose $n\mid \deg(P(t)) \text{ and } \text{gcd}(e_1,\cdots,e_s,n)=1$, then $\Br_\mathrm{un}(X)$ is generated by $$\rho+ \sum_{i=1}^s (t-a_i,\chi_i),$$ where $\rho\in \Br_0(X)$, $\chi_i\in H^1(K/k,\Q/\Z)$ satisfies $\sum_{i=1}^s \chi_i=0$ and $\chi_i(g)=0$ for any $g\in \Gal(K/k)$ with $g^{e_i'}=1_G$.
\end{exam}
\begin{proof} The quotient of $\widehat{T'}$ by $\Z\oplus S$ (in (\ref{def:D })) coincides with $\widehat{T}\otimes \Z_P\cong  \widehat{T}^s$, then the map $f: H^2(K/k,\widehat{T})\rightarrow H^2(K/k,\widehat{T'})$ induces the map $f': H^2(K/k,\widehat{T})\rightarrow H^2(K/k,\widehat{T})^s$ which sends $\alpha\in H^2(K/k,\widehat{T})$ to $(-e_1\alpha, \cdots,-e_s\alpha)$. Since $\text{gcd}(e_1,\cdots,e_s,n)=1$, one obtains $f'$ is injective, hence $f$ is also injective. Since $\widehat{T}$ and $\widehat T'$ is split by $K$, the injectivity of $f$ implies $\Ker[\Sha^2_\omega(\widehat{T})\rightarrow \Sha^2_\omega(\widehat{T'})]=0$.  The proof then follows from Corollary \ref{cor:seq} and Lemma \ref{lem:rem}.
%
\end{proof}

%
%
%
%
%

\section{The case $P(t)$ irreducible and $K/k$ Galois}

In the section,  we always assume that $K/k$ is a Galois field extension of degree $n$ and that $P(t)$ is a separable polynomial of degree $m$. Let $X$ be the variety (\ref{def:X}) defined by the equation (\ref{eq:norm}). We write $P(t)=p_1(t)\cdots p_s(t)$ with all $p_i(t)$ are irreducible. Let $L_i=k[t]/(p_i(t))$ and $e'=\gcd(n,m)$. We do not assume~$n\mid m$ in this section.

 Suppose $n\nmid m$.  let $U_1'$ be the  open affine subvariety of $X$ defined by (\ref{eq:norm}) and $P(t)\neq 0$. We may assume $P(0)\neq 0$. Let $0 \leqslant \delta < n, \delta\equiv -m \mod n$ and $\delta'=(m+\delta)/n$. Then $X$ contains an affine open subvariety $U_2$ defined by the equation
\begin{equation*} {t'}^\delta \tilde P(t')=N_{K/k}({\bf z'}),
\end{equation*}
where $\tilde P(t')={t'}^mP(1/t')$. Let $U_2'\subset U_2$ be defined by $\tilde P(t')\neq 0$. 

In \S \ref{sec2}, in fact we  obtain the diagram (\ref{pic}) by using the inclusion $U_2'\subset X$ when $n\nmid m$; hence $\mathbb D$ in (\ref{def:D }) contains the infinite divisor which is not very convenient when we compute  cohomology groups of $\widehat T'$. So, to obtain  a new "simple" diagram similar with (\ref{pic}) we will use the inclusion $U_1'\subset X$ (no matter whether $n\mid m$ or not) and  redefine \begin{equation}\label{D}
	\mathbb D=\mathbb Z_P\otimes \mathbb Z[K/k] \oplus S',
\end{equation}
where $S'=\Z[K/k]\oplus \mathbb Z[\Omega]$ with $\Omega=\{M: M\subset \Gamma_k/\Gamma_K, |M|=e'\}$ if $n \nmid m$; $S'=\mathbb Z$ if  $n \mid m$. If $n\nmid m$, such $\mathbb D$ is simpler than (\ref{def:D }). 
Similarly, the new diagram similar with (\ref{pic})  is also commutative, its rows and columns are exact.
The $\Gamma_k$-module $\widehat{T'}$ is the Galois lattice defined by the following exact sequence \begin{equation}\label{seq:new T'}
0 \rightarrow \Z_P\xrightarrow{f}  \mathbb D \to \widehat{T'} \rightarrow 0
\end{equation}
where $f$ sends any $\tau_i \in \Gamma_k/\Gamma_{L_i}$ to
$$f(\tau_i)=\begin{cases}(\tau_i\otimes N_K,-N_K, -S_K) &\text{ if } n\nmid m,\\
	(\tau_i\otimes N_K,-1) &\text{ if	} n \mid m,
\end{cases} $$ with
 $$S_K=\sum\limits_{\tiny\begin{array}{c}  M\subset \Gamma_k/\Gamma_K,
		\\ \# M =e'\end{array}} M \text{ and }  N_K=\sum_{\sigma\in \Gamma_k/\Gamma_{K}}\sigma.$$
In the diagram (\ref{rrr}), we redefine the morphism 
 $j: \widehat {T}\rightarrow \widehat {T'} $ induced by the morphism $\mathbb Z[K/k]\rightarrow \mathbb D$ by sending $\sigma\in \Gamma_k/\Gamma_{K}$  to $(-\sum_{i=1}^s N_i \otimes \sigma,j_\sigma),$
where \begin{align}\label{def:j-sigma}
	&j_\sigma=\begin{cases} (\delta'N_K-\delta\sigma,\delta' S_K-\frac {\delta}{e'}S_\sigma) &\text{ if  } n\nmid m,\\	
	 1,&\text{ if  } n\mid m;
\end{cases}\\
\notag
   &N_i=\sum_{\tau\in \Gamma_k/\Gamma_{L_i}}\tau \text{ and } S_\sigma =  \sum_ { 
	\tiny\begin{array}{c}\sigma \in M\subset \Gamma_k/\Gamma_K,
		\\ \# M =e'\end{array}
} M .
\end{align} 
Then we obtain a commutative diagram which is analogous to (\ref{rrr}).

Combining with Theorem \ref{thm:main}, by similar arguments as for (\ref{seq:main'}) in the proof of Lemma (\ref{lem:seq}),  we obtain the exact sequence
\begin{equation}\label{seq:main-1}
0\rightarrow   H^1(k,\widehat{T'})/j^*(H^1(k,\widehat{T})) \rightarrow \Br_\mathrm{un}(X)/\Br_0(X)  \rightarrow \Ker[\Sha^2_\omega(\widehat{T})\rightarrow \Sha^2_\omega(\widehat{T'})]\rightarrow 0.
\end{equation}


Let $L'_i=L_i\cap K$,  $\mathbb Z_{P'}=\oplus_i \Bbb Z[L'_i/k]$ and \begin{equation*}\label{D'}
	\mathbb D'=\mathbb Z_{P'}\otimes \mathbb Z[K/k] \oplus S'.
\end{equation*}  Let $\widehat{\mathbf T'}$ be the lattice defined by the exact sequence
\begin{equation} \label{big T'}
	0\rightarrow \mathbb Z_{P'} \xrightarrow{f'} \mathbb D' \rightarrow \widehat{\mathbf T'}\rightarrow 0,
\end{equation}
where $f'$ sends any $\tau_i\in \Gamma_k/\Gamma_{L'_i}$ to $(\tau\otimes N_K, f'_{\tau_i})$, where 
\begin{equation*} f'_{\tau_i}=\begin{cases} -[L_i:L'_i],&\text{ if  } n\mid m,\\
	(-[L_i:L'_i]N_K,-[L_i:L'_i]S_K), &\text{ if  } n\nmid m.
	\end{cases}
\end{equation*} 
We can define $\widehat T\rightarrow \widehat{\mathbf T'}$  induced by the morphism
$$ \mathbb Z[K/k]\rightarrow \mathbb D', \sigma \longmapsto (-\sum_{i=1}^s N'_i \otimes \sigma,j_\sigma),$$
where $j_\sigma$ is defined in (\ref{def:j-sigma}) and $N'_i=\sum_{\tau'\in \Gamma_k/\Gamma_{L'_i}} \tau'$. We define   $\widehat{\mathbf T'}\rightarrow \widehat{ T'}$  induced by the morphism
$$ \mathbb D'\rightarrow   \mathbb D, (\tau_i\otimes \sigma,a) \longmapsto (\sum_{\tau'_i\in \Gamma_{L'_i}/\Gamma_{L_i}}\tau_i\tau'_i\otimes \sigma,a) \text{ for any } \tau_i\in \Gamma_k/\Gamma_{L_i'} \text{ and } a\in S'.$$
Therefore,  one obtains the following commutative diagram
\begin{equation}\label{diag:T-T'}
	\xymatrix {
		0\ar[r] &\mathbb Z  \ar[r]\ar[d]  &\mathbb \Z[K/k] \ar[r] \ar[d]   &\widehat{ T}   \ar[r]\ar[d] & 0 \\
		0\ar[r] &\mathbb Z_{P'} \ar[r]\ar[d]  &\mathbb D' \ar[r] \ar[d]   &\widehat{\mathbf T'}   \ar[r]\ar[d] & 0 \\	
		0\ar[r]  &\mathbb Z_P  \ar[r] & \mathbb D \ar[r]  &\widehat{T'}  \ar[r]  & 0.
	}
\end{equation}

We can give a more amenable formula for the group $\Ker[\Sha^2_\omega(\widehat{T})\rightarrow \Sha^2_\omega(\widehat{T'})]$, which is the difficult part of the formula (\ref{seq:main-1}).
\begin{lem}\label{lem:injective-1} Suppose that $K/k$ is a Galois field extension and that $P(t)$ is a separable polynomial. Then $\Sha^2_\omega(\widehat{\mathbf T'})\rightarrow \Sha^2_\omega(\widehat{T'})$ is injective. In particular, we have $$\Ker[\Sha^2_\omega(\widehat{T})\rightarrow \Sha^2_\omega(\widehat{T'})]=\Ker[\Sha^2_\omega(\widehat{T})\rightarrow \Sha^2_\omega(\widehat{\mathbf T'})].$$
\end{lem}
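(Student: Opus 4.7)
The plan is to apply the snake lemma to the lower two rows of diagram \eqref{diag:T-T'} in order to present $\widehat{T'}$ as a short exact extension $0 \to \widehat{\mathbf T'} \to \widehat{T'} \to M \to 0$ with $M$ explicitly computable, and then to exploit the Shapiro structure of $M$ together with the long exact sequence in Galois cohomology to deduce the injectivity of $\Sha^2_\omega$ via a local-to-global argument on procyclic subgroups. To set this up, I first observe that the vertical map $\BZ_{P'} \to \BZ_P$ is the direct sum over $i$ of the transfer maps $\mathrm{tr}_i \colon \BZ[\Gamma_k/\Gamma_{L'_i}] \to \BZ[\Gamma_k/\Gamma_{L_i}]$, $g\Gamma_{L'_i} \mapsto \sum_{\tau \in \Gamma_{L'_i}/\Gamma_{L_i}} g\tau\Gamma_{L_i}$, and that $\mathbb D' \to \mathbb D$ is the direct sum of $\mathrm{tr}_i \otimes \mathrm{id}_{\BZ[K/k]}$ with the identity on $S'$; both are clearly injective.

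Using the inductive identification $\BZ[\Gamma_k/\Gamma_{L_i}] \cong \Ind_{\Gamma_{L'_i}}^{\Gamma_k}\BZ[\Gamma_{L'_i}/\Gamma_{L_i}]$, I then identify
\[
  C_i := \coker(\mathrm{tr}_i) \cong \Ind_{\Gamma_{L'_i}}^{\Gamma_k}\widehat{T_{L_i/L'_i}},
\]
where $T_{L_i/L'_i}$ is the norm torus of $L_i/L'_i$. In particular $C := \bigoplus_i C_i$ is torsion-free as an abelian group, so $\mathrm{Tor}_1^{\BZ}(C, \widehat T) = 0$ and the snake lemma yields the short exact sequence
\[
  0 \to \widehat{\mathbf T'} \to \widehat{T'} \to M \to 0, \qquad M := C \otimes_{\BZ} \widehat T.
\]
By the projection formula $M \cong \bigoplus_i \Ind_{\Gamma_{L'_i}}^{\Gamma_k}\bigl(\widehat{T_{L_i/L'_i}} \otimes \widehat T|_{\Gamma_{L'_i}}\bigr)$, and Shapiro's lemma gives $H^j(k, M) \cong \bigoplus_i H^j\bigl(L'_i, \widehat{T_{L_i/L'_i}} \otimes \widehat T|_{\Gamma_{L'_i}}\bigr)$ for all $j \geqslant 0$.

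The associated long exact sequence $H^1(k, M) \xrightarrow{\delta} H^2(k, \widehat{\mathbf T'}) \to H^2(k, \widehat{T'})$ identifies the kernel of the second map with the image of $\delta$. For $\alpha \in \Sha^2_\omega(\widehat{\mathbf T'})$ lying in this kernel I write $\alpha = \delta(\beta)$. The condition $\alpha \in \Sha^2_\omega$ says that for every subfield $F \subset \bar k$ with $\Gal(\bar k/F)$ procyclic, $\beta|_F$ lies in the image of $H^1(F, \widehat{T'}) \to H^1(F, M)$. Combining the Shapiro decomposition with the analogous description of $H^1(F, \widehat{T'})$ as a subgroup of $\bigoplus_i H^1(L_i \otimes_k F, \BQ/\BZ)$ coming from the defining sequence of $\widehat{T'}$ (compare \eqref{key}), I then promote this local-on-cyclic liftability to a global one, forcing $\beta$ to lie in the image of $H^1(k, \widehat{T'})$ and hence $\alpha = 0$. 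The \emph{In particular} assertion is then immediate: by \eqref{diag:T-T'} the map $\widehat T \to \widehat{T'}$ factors through $\widehat{\mathbf T'}$, and injectivity on $\Sha^2_\omega$ makes the two kernels in question coincide. The main obstacle is the last step---the local-to-global promotion of the lift---which must be handled Shapiro summand by Shapiro summand, leveraging that $\Gamma_K$ is normal in $\Gamma_k$, each $L'_i \subset K$, and that $\widehat{T_{L_i/L'_i}}$ is the character module of a norm torus with well-understood restriction-corestriction behavior, allowing the argument to reduce to a Chebotarev density input.
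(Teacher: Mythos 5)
Your setup coincides with the paper's: applying the snake lemma to the lower two rows of (\ref{diag:T-T'}) gives the extension $0\to\widehat{\mathbf T'}\to\widehat{T'}\to \mathbb U\to 0$ with $\mathbb U=\mathrm{coker}(\mathbb Z_{P'}\to\mathbb Z_P)\otimes\widehat T$, and your identification of the cokernel summands as induced norm-torus character modules (hence your Shapiro computation of $H^j(k,\mathbb U)$) is correct. The problem is the final step, which is not a deferrable technicality. After writing $\alpha=\delta(\beta)$ with $\beta\in H^1(k,\mathbb U)$, the assertion you need --- that liftability of $\beta|_F$ to $H^1(F,\widehat{T'})$ for every procyclic $F$ forces liftability of $\beta$ over $k$ --- is exactly the statement $\Sha^2_\omega(\widehat{\mathbf T'})\cap\mathrm{Im}(\delta)=0$, i.e.\ the lemma itself on the image of $\delta$. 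You have reformulated the problem rather than reduced it. Moreover the tool you invoke to close it, a ``Chebotarev density input,'' is not available: $k$ is an arbitrary field of characteristic zero, $\Sha^2_\omega$ is defined purely group-theoretically by vanishing on procyclic subgroups, and no arithmetic density argument applies. Nothing in your sketch uses the hypotheses that $K/k$ is Galois and $L'_i=L_i\cap K$ in a load-bearing way, which is a sign the argument cannot yet be complete.

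The missing idea is to exploit the presentation $0\to\mathbb Z_{P'}\otimes\widehat T\to\mathbb Z_P\otimes\widehat T\to\mathbb U\to 0$ \emph{cohomologically}, as a second computation of the same connecting map, rather than only as a definition of $\mathbb U$. Since $H^1(k,\widehat{\mathbf T'})\to H^1(k,\widehat{T'})$ and $H^1(k,\mathbb Z_{P'}\otimes\widehat T)\to H^1(k,\mathbb Z_P\otimes\widehat T)$ are surjective, both connecting maps out of $H^1(k,\mathbb U)$ are injective, and the resulting commutative ladder identifies $\Ker[H^2(k,\widehat{\mathbf T'})\to H^2(k,\widehat{T'})]$ with $H^1(k,\mathbb U)$ embedded in $\Ker[H^2(k,\mathbb Z_{P'}\otimes\widehat T)\to H^2(k,\mathbb Z_P\otimes\widehat T)]$. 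Your class $\alpha$ is thereby transported into $A:=\Ker[\Sha^2_\omega(\mathbb Z_{P'}\otimes\widehat T)\to\Sha^2_\omega(\mathbb Z_P\otimes\widehat T)]$, and $A$ can be shown to vanish directly: Shapiro's lemma gives $A\cong\oplus_i\Ker[\Res_{L_i'/L_i}:\Sha^2_\omega(\bar k/L_i',\widehat T)\to H^2(L_i,\widehat T)]$, and since $\widehat T$ is split by $K$ and $L_i\cap K=L_i'$ (so that $\Gal(K.L_i/L_i)\cong\Gal(K/L_i')$), inflation--restriction kills each summand. This is where the Galois hypotheses actually enter, and it replaces the unproved local-to-global promotion in your sketch. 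Your concluding ``in particular'' step is fine once the injectivity is established.
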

\begin{proof}  The last two rows of (\ref{diag:T-T'})
 yield the exact sequence
\begin{equation}\label{seq:T'}
	0 \rightarrow \widehat{\mathbf T'} \rightarrow \widehat{T'} \rightarrow \mathbb U\rightarrow 0,
\end{equation}
where $\Bbb U$ is the Galois lattice defined by the exact sequence
\begin{equation}\label{seq:U}
	0\rightarrow \mathbb Z_{P'}\otimes \widehat T  \rightarrow \mathbb Z_P\otimes \widehat T\rightarrow \mathbb U \rightarrow 0.
\end{equation}
From sequences (\ref{seq:T'}) and (\ref{seq:U}) we derive the following commutative diagram
\begin{equation*}
	\xymatrix {
		0\ar[r] &H^1(k, \mathbb U) \ar[r]\ar@{=}[d] &H^2(k,\widehat{\mathbf T'})\ar[r]\ar[d] &H^2(k,\widehat{ T'}) \ar[d]\\
		0\ar[r] &H^1(k,\mathbb U) \ar[r] &H^2(k,\mathbb Z_{P'}\otimes \widehat T)  \ar[r] &H^2(k,\mathbb Z_P\otimes \widehat T) ,
	}
\end{equation*}
where the exactness of rows follows from the surjectivity of   $H^1(k,\widehat{\mathbf T'})\rightarrow H^1(k,\widehat{ T'})$ and $H^1(k,\mathbb Z_{P'}\otimes \widehat T)  \rightarrow H^1(k,\mathbb Z_P\otimes \widehat T)$. Hence, the injectivity of $\Sha^2_\omega(\widehat{\mathbf T'})\rightarrow \Sha^2_\omega(\widehat{ T'})$ follows from the injectivity of $\Sha^2_\omega(\mathbb Z_{P'}\otimes \widehat T)\rightarrow \Sha^2_\omega(\mathbb Z_P\otimes \widehat T)$.


Let
$$A:=\Ker[\Sha^2_\omega(\mathbb Z_{P'}\otimes \widehat T)\rightarrow \Sha^2_\omega(\mathbb Z_P\otimes \widehat T)]
= \Ker[\Sha^2_\omega(\mathbb Z_{P'}\otimes \widehat T)\rightarrow
H^2(k,\Z_P\otimes \widehat T)]. $$
For any Galois extension $F'/F$ and $\Gal(F'/F)$-module $M$, we denote
$\Sha^2_\omega(F'/F,M)$  the subgroup of $H^2(F'/F,M)$
consisting of those elements whose image in $H^2(H,M)$ vanishes for
every cyclic subgroup~$H$ of~$\Gal(F'/F)$.
Using Shapiro's lemma,
we have
$$A\cong \oplus_i\Ker[\Res_{L_i'/L_i}:\Sha^2_\omega(\bar k/L_i',\widehat T)\rightarrow
H^2(L_i, \widehat T)].$$ We have the following commutative diagram
\begin{equation}\label{diag:Sha}
	\xymatrix @R=15pt @C=15pt{ &\Sha^2_\omega(\bar k/L_i',\widehat T)\ar[r] &
		H^2(L_i,\widehat T)\\
		&\Sha^2_\omega(K/L_i',\widehat T) \ar[r] \ar[u]^{f} &
		H^2(K.L_i/L_i,\widehat T)\ar[u]^{g}, }
\end{equation}
where $f$ is an
isomorphism since $\widehat T$ splits by $K$, $g$ is injective by Hochschild-Serre spectral sequence and $H^1(K.L_i,\widehat T)=0$.  Since $L_i\cap K=L_i'$,  the second horizontal map of (\ref{diag:Sha})  is injective. Therefore
$$A\cong \oplus_i\Ker[\Sha^2_\omega(K/L_i',\widehat
T) \rightarrow H^2(K.L_i/L_i,\widehat T)]=0. \qedhere$$
\end{proof}

In the remainder of this section, we always assume that \emph{$P(t)$ is an irreducible polynomial, $L=k[t]/(P(t))$ and that the fields $K$  and $L':=L\cap K$ are Galois over $k$}. Let  $G=\Gal(K/k)$, $H=\Gal(K/L')$ and $l=[L:L']$.
\begin{lem} \label{lem:sha-inj} The natural map
$$\Ker[\Sha^2_\omega(\widehat{\mathbf T'})\rightarrow H^2(H,\widehat{\mathbf T'})]\rightarrow H^1(G/H,H^1(H,\widehat{\mathbf T'}))$$
is injective.
\end{lem}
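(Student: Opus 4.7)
The plan is to use the Hochschild--Serre spectral sequence $E_2^{p,q} = H^p(G/H, H^q(H, \widehat{\mathbf T'})) \Rightarrow H^{p+q}(G, \widehat{\mathbf T'})$ for the normal subgroup $H \trianglelefteq G$, which is legitimate because $\widehat{\mathbf T'}$ is split by $K$ through all terms of (\ref{big T'}). The induced filtration on $H^2(G, \widehat{\mathbf T'})$ yields a short exact sequence
$$0 \to E_\infty^{2,0} \to \Ker\bigl[H^2(G, \widehat{\mathbf T'}) \to H^2(H, \widehat{\mathbf T'})\bigr] \to E_\infty^{1,1} \to 0,$$
in which $E_\infty^{2,0}$ equals the image of the inflation $H^2(G/H, (\widehat{\mathbf T'})^H) \to H^2(G, \widehat{\mathbf T'})$ and $E_\infty^{1,1} = \Ker(d_2^{1,1}) \subseteq H^1(G/H, H^1(H, \widehat{\mathbf T'}))$. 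The natural map in the statement is the composition of the projection $\Ker[\mathrm{res}_H] \twoheadrightarrow E_\infty^{1,1}$ with this inclusion, so its kernel on $\Ker[\mathrm{res}_H]$ is precisely $E_\infty^{2,0}$. It therefore suffices to show $\Sha^2_\omega(\widehat{\mathbf T'}) \cap E_\infty^{2,0} = 0$.

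Fix $\alpha = \mathrm{inf}_{G/H}^G(\beta)$ with $\beta \in H^2(G/H, (\widehat{\mathbf T'})^H)$ and assume $\alpha \in \Sha^2_\omega(\widehat{\mathbf T'})$. I first make $(\widehat{\mathbf T'})^H$ explicit by taking $H$-invariants of (\ref{big T'}): since $H$ acts trivially on $\mathbb Z_{P'} = \mathbb Z[G/H]$ and, by normality of $H$, $(\mathbb Z[G])^H = N_H \cdot \mathbb Z[G] \cong \mathbb Z[G/H]$ as $G/H$-modules (treating the summand $S'$ analogously by recognizing its pieces as permutation modules), this gives a short exact sequence of $G/H$-modules of the same structural shape as (\ref{big T'}), but one step down the Galois tower at $L'/k$. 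Next, for every cyclic subgroup $C' \leq G/H$ I lift a generator to $g \in G$ and set $C = \langle g \rangle$; then $C$ is cyclic in $G$ and surjects onto $C'$. The standard compatibility $\mathrm{res}_C^G \circ \mathrm{inf}_{G/H}^G = \mathrm{inf}_{C'}^C \circ \mathrm{res}_{C'}^{G/H}$ together with $\alpha \in \Sha^2_\omega$ forces
$$\mathrm{inf}_{C'}^C\bigl(\mathrm{res}_{C'}^{G/H}(\beta)\bigr) = 0 \in H^2(C, (\widehat{\mathbf T'})^H)$$
for every cyclic $C' \leq G/H$.

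The main obstacle is that inflation $\mathrm{inf}_{C'}^C$ along a surjection of cyclic groups $C \twoheadrightarrow C'$ is not injective in general, so this does not directly yield $\mathrm{res}_{C'}^{G/H}(\beta) = 0$. To circumvent this, I would exploit the explicit description of $(\widehat{\mathbf T'})^H$ obtained above, on which the inflation maps can be written down via norm elements on the permutation summands; the argument should then mirror that of Lemma \ref{lem:injective-1}, showing that any such $\beta$ must lie in the image of $d_2^{0,1}: H^1(H, \widehat{\mathbf T'})^{G/H} \to H^2(G/H, (\widehat{\mathbf T'})^H)$, which is precisely the kernel of inflation into $H^2(G, \widehat{\mathbf T'})$. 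Consequently, $\alpha = \mathrm{inf}(\beta) = 0$, giving $\Sha^2_\omega(\widehat{\mathbf T'}) \cap E_\infty^{2,0} = 0$ and the desired injectivity.
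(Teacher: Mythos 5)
Your reduction is correct and is in fact the same one the paper uses: via the Hochschild--Serre spectral sequence for $H\trianglelefteq G$, the kernel of the map in the statement is $\Sha^2_\omega(\widehat{\mathbf T'})\cap E_\infty^{2,0}$, so the problem becomes showing that a class $\alpha=\mathrm{inf}(\beta)$ with $\beta\in H^2(G/H,\widehat{\mathbf T'}^{H})$ that dies on every cyclic subgroup of $G$ must vanish. But the proposal stops exactly where the real work begins. You correctly observe that inflation along a surjection of cyclic groups $C\twoheadrightarrow C'$ is not injective in general, and then propose to ``mirror Lemma \ref{lem:injective-1}'' to conclude that $\beta$ lies in $\mathrm{im}(d_2^{0,1})$; this is not an argument. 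Lemma \ref{lem:injective-1} compares $\widehat{\mathbf T'}$ with $\widehat{T'}$ via the auxiliary lattice $\mathbb U$ and the splitting field $K$, and its mechanism has no evident bearing on the inflation--restriction problem you face here; you give no indication of how the ``explicit description of $\widehat{\mathbf T'}^{H}$'' would actually be used.

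The paper closes precisely this gap with two separate lemmas, each requiring a genuine computation. First (Lemma \ref{lem: inj-H'}): for each $g\in G$, with $G'=\langle g\rangle$ and $H'=G'\cap H$, the map $H^2(G'/H',\widehat{\mathbf T'}^{H'})\to H^2(G',\widehat{\mathbf T'})$ is injective; this is equivalent to the surjectivity of $H^1(G',\widehat{\mathbf T'})\to H^1(H',\widehat{\mathbf T'})^{G'/H'}$ and is proved by an explicit computation with characters subject to the norm condition coming from (\ref{big T'}). Note that the relevant coefficients here are $\widehat{\mathbf T'}^{H'}$, not $\widehat{\mathbf T'}^{H}$: enlarging the coefficients along $H'\subseteq H$ is the key device, and your sketch never engages with it. Second (Lemma \ref{lem:3-inj}): the map $H^2(G/H,\widehat{\mathbf T'}^{H})\to\prod_{G'}H^2(G'/H',\widehat{\mathbf T'}^{H'})$ is injective; this uses the identifications $H^2(G/H,\widehat{\mathbf T'}^{H})\cong H^2(G/H,S'^{H})$ and $H^2(G'/H',\widehat{\mathbf T'}^{H'})\cong H^2(G'/H',S'^{H'})$ from (\ref{big T'}), together with the facts that $S'$ is a permutation module and that $S'^{H}$ is a direct factor of $S'^{H'}$ as a $G'/H'$-module. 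Combining the two gives $\beta=0$ outright (stronger than your intended $\beta\in\mathrm{im}(d_2^{0,1})$), hence $\alpha=0$. Without these two statements, or genuine substitutes for them, your proposal does not prove the lemma.
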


Let $g\in G$, $G'=\left<g\right>$ and  $H'=\left<g\right>\cap H$.  We will prove two lemmas before the proof of Lemma~\ref{lem:sha-inj}.
\begin{lem}\label{lem: inj-H'}
	The morphism $H^2(G'/H', \widehat {\mathbf T'}^{H'})\rightarrow H^2(G', \widehat {\mathbf T'})$ is injective.
\end{lem}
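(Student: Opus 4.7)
The plan is to exploit that $G'=\langle g\rangle$ is cyclic, so that $H'$ is cyclic and the quotient $G'/H'$ is cyclic as well. For any $G'$-module $M$ this gives the explicit descriptions
$$H^2(G',M) = M^{G'}/N_{G'}(M), \qquad H^2(G'/H', M^{H'}) = (M^{H'})^{G'/H'}/N_{G'/H'}(M^{H'}),$$
and of course $(M^{H'})^{G'/H'} = M^{G'}$. Under these identifications the inflation $H^2(G'/H', M^{H'}) \to H^2(G', M)$ is the obvious map induced by the inclusion $M^{H'} \hookrightarrow M$: it sends the class of $x\in M^{G'}$ modulo $N_{G'/H'}(M^{H'})$ to the class of the same $x$ modulo $N_{G'}(M)$.

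Consequently, taking $M=\widehat{\mathbf T'}$, injectivity of the map in question reduces to verifying the inclusion $N_{G'}(M) \subseteq N_{G'/H'}(M^{H'})$. I would fix coset representatives $g_1,\dots,g_r$ of $H'$ in $G'$; for any $m\in M$ the element $N_{H'}(m)$ lies in $M^{H'}$, and one computes
$$N_{G'}(m) \;=\; \sum_{i=1}^{r} g_i\cdot N_{H'}(m) \;=\; N_{G'/H'}\bigl(N_{H'}(m)\bigr),$$
where the last step uses that each $g_i$ acts on $M^{H'}$ through its class in $G'/H'$. This decomposition yields the required inclusion and hence the injectivity of the inflation.

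No serious obstacle is anticipated: the statement is a purely formal fact about the inflation map on $H^2$ for a cyclic extension of finite groups, and it uses no special property of the lattice $\widehat{\mathbf T'}$ beyond its being a $G'$-module. The only things to be careful about are the canonical identification of $H^2$ of a cyclic group with invariants modulo the norm, and the norm factorisation $N_{G'} = N_{G'/H'}\circ N_{H'}$ when $H'\triangleleft G'$.
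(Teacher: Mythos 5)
There is a genuine gap, and it is fatal: the lemma is not the ``purely formal fact'' you claim it to be. Under the periodicity identifications $H^2(G',M)\cong M^{G'}/N_{G'}(M)$ and $H^2(G'/H',M^{H'})\cong M^{G'}/N_{G'/H'}(M^{H'})$, the inflation map is \emph{not} the map induced by the identity on $M^{G'}$; indeed that map is not even well defined, since $N_{G'/H'}(M^{H'})\not\subseteq N_{G'}(M)$ in general (take $M=\Z$ with trivial action, $G'=\Z/4$, $H'=\Z/2$: then $N_{G'/H'}(M^{H'})=2\Z$ while $N_{G'}(M)=4\Z$). The two periodicity isomorphisms are given by cup product with generators of $H^2(G',\Z)\cong\Z/\#G'$ and $H^2(G'/H',\Z)\cong\Z/\#(G'/H')$ respectively, and these generators do not match under inflation (inflation introduces a factor $\#H'$), so the identifications cannot be compared in the naive way you propose. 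Worse, the underlying claim that inflation on $H^2$ is injective for an arbitrary module over a cyclic group is false: for $G'=\Z/4$, $H'=\Z/2$ and $M=\Z/2$ with trivial action, the five-term exact sequence shows that the transgression $H^1(H',M)\to H^2(G'/H',M^{H'})$ is an isomorphism $\Z/2\to\Z/2$, so the inflation $H^2(G'/H',M^{H'})\to H^2(G',M)$ is the zero map on a nonzero group. Your norm computation $N_{G'}=N_{G'/H'}\circ N_{H'}$ is correct but only yields the inclusion $N_{G'}(M)\subseteq N_{G'/H'}(M^{H'})$, which proves nothing about the actual inflation map.

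What actually has to be proved is that the restriction $H^1(G',\widehat{\mathbf T'})\to H^1(H',\widehat{\mathbf T'})^{G'/H'}$ is surjective: by the five-term exact sequence of Hochschild--Serre, its cokernel is exactly the kernel of the inflation in question. This is how the paper proceeds: it computes both sides explicitly from the defining presentation (\ref{big T'}) of $\widehat{\mathbf T'}$, describing them as groups of tuples of characters subject to congruence conditions, and then uses the cyclicity of $G'$ to extend characters from $H'$ to $G'$ compatibly with those conditions. The specific shape of the lattice $\widehat{\mathbf T'}$ enters in an essential way, and no argument that treats it as an arbitrary $G'$-module can succeed.
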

\begin{proof} By Hochschild-Serre's spectral sequence, we only need to show that $H^1(G', \widehat {\mathbf T'}) \rightarrow H^1(H', \widehat {\mathbf T'})^{G'/H'}$ is surjective.
	
	Let $k'=K^{G'}$ and $P(t)=q_1(t)q_2(t)\cdots q_r(t)$ with all $q_i(t)$ irreducible over $k'$. Recall $l=[L:L']$. Let $L_i'=k'[t]/(q_i(t))$ for $1 \leq i \leq r$. All $\Gal(K/K\cap L_i')$ are isomorphic and we denote them by $H'$.

Let $ \# H'=d_1, \#(G'/H')=d_2 d_3$, where $d_3$ is the maximal factor of $\#(G'/H') $ such that $\gcd(d_1,d_3)=1$, hence $\gcd(d_2,d_3)=1$. Recall $e'=\mathrm{gcd}(n,\deg{P(t)})$, hence $d_2d_3\mid e'$. Let $e''=e'/(d_2d_3)$. By (\ref{big T'}), we have
	\begin{align}\notag
	H^1(G', \widehat {\mathbf T'})&\cong  \{(\chi_i)_i\in H^1(H', \Q/\Z)^r: l\sum_{i=1}^r \Cor_{H'/G'}(\chi_i)(g)=0\text{ for any } g\in G',g^{e'}=1_{G'} \}\\
	\notag
	&\cong  \{(\widetilde\chi_i|_{H'})_i:\widetilde\chi_i\in H^1(G', \Q/\Z)^r, l\sum_{i=1}^r d_2d_3\widetilde\chi_i(h)=0\text{ for any } h\in G', h^{e'}=1_{G'} \}.\\
	\label{seq: H1G'}
	&= \{(\widetilde\chi_i|_{H'})_i:\widetilde\chi_i\in H^1(G', \Q/\Z)^r, l\sum_{i=1}^r\widetilde\chi_i(h)=0\text{ for any } h\in H', h^{e''}=1_{H'} \}.
\end{align}
Similarly by (\ref{big T'}), we have
\begin{equation*}
	H^1(H', \widehat {\mathbf T'})\cong \{\sum_{\sigma\in G/H}\chi_\sigma\otimes \sigma \in H^1(H', \Q/\Z)\otimes \Z[G/H]: l\sum_{\sigma\in G/H}\chi_\sigma(h)=0 \text{ for any } h\in H', h^{e'}=1_{H'}\}.
\end{equation*}
Therefore
\begin{align}\notag
	H^1(H', &\widehat {\mathbf T'})^{G'/H'}\cong  \{(\chi_i)_i\in H^1(H', \Q/\Z)^r: ld_2d_3\sum_{i=1}^r\chi_i(h)=0 \text{ for any } h\in H', h^{e'}=1_{H'}\}\\
	\label{seq: H1H'}
	&=\{(\chi_i)_i\in H^1(H', \Q/\Z)^r: l\sum_{i=1}^r\chi_i(h)=0\text{ for any } h\in H', h^{e''}=1_{H'} \}.
\end{align}
By (\ref{seq: H1G'}) and (\ref{seq: H1H'}), the cyclicity of $G'$ implies  that $H^1(G', \widehat {\mathbf T'}) \rightarrow H^1(H', \widehat {\mathbf T'})^{H'}$ is surjective.
\end{proof}

\begin{lem}\label{lem:3-inj} 	$H^2(G/H, \widehat {\mathbf T'}^{H}) \rightarrow \prod_{G' \text{ cyclic}} H^2(G'/H',\widehat {\mathbf T'}^{H'})$ is injective.
\end{lem}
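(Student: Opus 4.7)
The plan is to combine Lemma~\ref{lem: inj-H'} with a Hochschild--Serre analysis of the normal extension $H \triangleleft G$, whose existence follows from the hypothesis that $L' = L \cap K$ is Galois over $k$.

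First, by Lemma~\ref{lem: inj-H'} the maps $H^2(G'/H', \widehat{\mathbf T'}^{H'}) \hookrightarrow H^2(G', \widehat{\mathbf T'})$ are injective for every cyclic $G' \subset G$, and for each such $G'$ the composite
\[
H^2(G/H, \widehat{\mathbf T'}^H) \longrightarrow H^2(G'/H', \widehat{\mathbf T'}^{H'}) \hookrightarrow H^2(G', \widehat{\mathbf T'})
\]
factors through the inflation $H^2(G/H, \widehat{\mathbf T'}^H) \to H^2(G, \widehat{\mathbf T'})$ followed by restriction to $G'$. Hence the lemma reduces to proving that the composite
\[
H^2(G/H, \widehat{\mathbf T'}^H) \xrightarrow{\;\mathrm{Inf}\;} H^2(G, \widehat{\mathbf T'}) \xrightarrow{\prod \mathrm{Res}_{G'}} \prod_{G' \text{ cyclic}} H^2(G', \widehat{\mathbf T'})
\]
is injective; equivalently, the preimage under $\mathrm{Inf}$ of $\Sha^2_\omega(G, \widehat{\mathbf T'})$ must be trivial.

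I would then handle this in two steps. For the injectivity of $\mathrm{Inf}$, the five-term Hochschild--Serre sequence reduces the claim to the surjectivity of $H^1(G, \widehat{\mathbf T'}) \to H^1(H, \widehat{\mathbf T'})^{G/H}$, which I would establish by an explicit character computation using the defining sequence~(\ref{big T'}), parallel to~(\ref{seq: H1G'}) and~(\ref{seq: H1H'}) in the proof of Lemma~\ref{lem: inj-H'}. Here the normality of $H$ in $G$ forces $H$ to act trivially on $\mathbb Z_{P'} = \mathbb Z[G/H]$, and the injectivity of $\mathbb Q/\mathbb Z$ lets us lift any character of $H$ to a character of $G$ satisfying the required vanishing conditions. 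For the computation of $\Sha^2_\omega(G, \widehat{\mathbf T'})$ and its intersection with the inflation image, I would use the long exact sequence attached to~(\ref{big T'}): the summand $\mathbb Z[G/H] \otimes \mathbb Z[K/k]$ and, when $n \nmid m$, the $\mathbb Z[K/k]$-factor of $S'$ are isomorphic to free $\mathbb Z[G]$-modules, hence cohomologically trivial over every subgroup of $G$. Modulo these, $H^2(G, \widehat{\mathbf T'})$ is assembled from $H^*(H, \mathbb Z)$ via Shapiro applied to $\mathbb Z[G/H]$, and from the permutation data $H^*(G, \mathbb Z[\Omega])$ (or $H^*(G, \mathbb Z)$ when $n \mid m$); restriction to cyclic subgroups is injective on each of these pieces, since every element of $G$ generates a cyclic subgroup.

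The main obstacle will be the bookkeeping in the final step: one has to trace cocycles through the connecting maps of the long exact sequence of~(\ref{big T'}) and verify that any nonzero class in the image of inflation produces a nonzero restriction to some cyclic subgroup. Matching the inflation image in $H^2(G, \widehat{\mathbf T'})$ with the explicit Shapiro generators, so that the cyclic-detection property transfers correctly, is where the computation is most delicate and closely parallels the end of the proof of Lemma~\ref{lem: inj-H'}.
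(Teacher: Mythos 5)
Your reduction in the first paragraph is fine: granting Lemma~\ref{lem: inj-H'} and the compatibility of inflation with restriction, the lemma is indeed equivalent to the injectivity of $\mathrm{Inf}\colon H^2(G/H,\widehat{\mathbf T'}^H)\to H^2(G,\widehat{\mathbf T'})$ together with $\mathrm{Im}(\mathrm{Inf})\cap \Sha^2_\omega(\widehat{\mathbf T'})=0$. But note what you have done: the second of these two statements is \emph{exactly} Lemma~\ref{lem:sha-inj} (its domain $\Ker[\Sha^2_\omega(\widehat{\mathbf T'})\to H^2(H,\widehat{\mathbf T'})]$ meets the kernel of the edge map $\psi$ precisely in $\mathrm{Im}(\mathrm{Inf})\cap\Sha^2_\omega$), and in the paper that lemma is \emph{deduced from} the present one. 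So your route replaces the statement to be proved by an equivalent pair of statements living in the larger and messier group $H^2(G,\widehat{\mathbf T'})$, and both of your proposed justifications for that pair have real gaps. For the injectivity of $\mathrm{Inf}$, i.e.\ the surjectivity of $H^1(G,\widehat{\mathbf T'})\to H^1(H,\widehat{\mathbf T'})^{G/H}$, you invoke "the injectivity of $\mathbb Q/\mathbb Z$ lets us lift any character of $H$ to a character of $G$". That is precisely the step that only works when the ambient group is cyclic (or abelian) — it is why the paper proves Lemma~\ref{lem: inj-H'} only for cyclic $G'$; for nonabelian $G$ a character of a normal subgroup need not extend (take $G=Q_8$ and $H$ its center). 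Worse, the actual map here is not a restriction of characters of $G$: via Shapiro, $H^1(G,\widehat{\mathbf T'})\cong C\subset H^1(H,\mathbb Q/\mathbb Z)$ and $H^1(H,\widehat{\mathbf T'})^{G/H}\cong (C')^{G/H}$, and surjectivity amounts to showing that the condition "$l\Cor_{L'/k}(\chi)$ kills the $e'$-torsion of $H$" forces "$l\Cor_{L'/k}(\chi)$ kills the $e'$-torsion of $G$". A Mackey computation only gives $[\langle g\rangle:\langle g\rangle\cap H]\cdot l\Cor(\chi)(g)=0$ for $g\in G$ with $g^{e'}=1$, which is weaker than what you need; divisibility of $\mathbb Q/\mathbb Z$ does not close this gap, and you supply no argument that does.

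The second half is also not sound as sketched. You assert that, modulo the free summands, $H^2(G,\widehat{\mathbf T'})$ is built from $H^*(H,\mathbb Z)$ and $H^*(G,\mathbb Z[\Omega])$ and that "restriction to cyclic subgroups is injective on each of these pieces, since every element of $G$ generates a cyclic subgroup". That reasoning detects classes in $H^2(E,\mathbb Z)=\Hom(E,\mathbb Q/\mathbb Z)$, but the long exact sequence of~(\ref{big T'}) also feeds $H^3(G,\mathbb Z_{P'})\cong H^3(H,\mathbb Z)\cong H^2(H,\mathbb Q/\mathbb Z)$ into $H^2(G,\widehat{\mathbf T'})$ through the connecting map, and for non-cyclic $H$ this group contains nonzero classes that vanish on every cyclic subgroup (already for $H\cong(\mathbb Z/p)^2$). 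So $\Sha^2_\omega(\widehat{\mathbf T'})$ is not controlled by the argument you give; one would have to show that the inflation image avoids that piece, which is additional, nontrivial work. The paper's proof avoids all of this by taking $H$-invariants \emph{before} computing: since $P(t)$ is irreducible, $\mathbb Z_{P'}=\mathbb Z[G/H]$ has vanishing $H^i(G/H,-)$ for $i=2,3$, and $(\mathbb Z_{P'}\otimes\mathbb Z[K/k])^H$ is an induced $G/H$-module, so the $H$-invariants of~(\ref{big T'}) give directly $H^2(G/H,\widehat{\mathbf T'}^H)\cong H^2(G/H,S'^H)$ with $S'^H$ a $G/H$-permutation module, for which $\Sha^2_\omega$ vanishes; the group $H^2(G,\widehat{\mathbf T'})$ never enters. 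I would recommend abandoning the detour through inflation and arguing on $\widehat{\mathbf T'}^H$ directly.
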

\begin{proof}
By the exact sequence (\ref{big T'}), we have the exact sequence $$0\rightarrow \mathbb Z_{P'} \rightarrow \mathbb D'^H\rightarrow \widehat {\mathbf T'}^{H}\rightarrow 0. $$
Hence
\begin{equation}\label{iso:1}
	H^2(G/H, \widehat {\mathbf T'}^{H}) \cong H^2(G/H, \mathbb D'^H)\cong H^2(G/H, S'^H).
\end{equation}	
Similarly  by the exact sequence (\ref{big T'}), we have
\begin{equation}\label{iso:2}
	H^2(G'/H',\widehat {\mathbf T'}^{H'}) \cong H^2(G'/H', \mathbb D'^{H'})\cong H^2(G'/H', S'^{H'}).	
\end{equation}	
By (\ref{iso:1}) and (\ref{iso:2}), it is enough to show that $H^2(G/H, S'^H)\rightarrow \prod_{G' \text{ cyclic}} H^2(G'/H', S'^{H'})$ is injective.
The injectivity follows from the fact that $S'$ is a $G$-permutation module and that $S'^H$ is a direct factor of $S'^{H'}$ as $G'/H'$-modules.
%
%
\end{proof}	
\begin{proof}[The proof of Lemma \ref{lem:sha-inj}:]
	By Hochschild-Serre's spectral sequence, we have the commutative diagram of exact sequences
\begin{equation*}
	\xymatrix {
		H^2(G/H, \widehat {\mathbf T'}^H)\ar[r]\ar[d]^{f_1} &\Ker[H^2(G, \widehat {\mathbf T'}) \rightarrow H^2(H, \widehat {\mathbf T'})]\ar[r]^{\psi}\ar[d]^{f_2}  &H^1(G/H,H^1(H,\widehat {\mathbf T'}))\\	
	\prod_{g} H^2(G'/H', \widehat {\mathbf T'}^{H'})\ar[r] &\Ker[\prod_{g} H^2(G', \widehat {\mathbf T'}) \rightarrow \prod_{g} H^2(H', \widehat {\mathbf T'})],
	}
\end{equation*}
where $g$ runs through every element in $G$, $G'=<g>$ and $H'=<g>\cap H$.

Suppose $\alpha \in \Ker{f_2}\cap \Ker{\psi}$, then there exists an element $\beta\in H^2(G/H, \widehat {\mathbf T'}^H)$ such that $\alpha$ is the image of $\beta$.  The map in the second row is injective by Lemma \ref{lem: inj-H'}, hence $f_1(\alpha)=0$. Since $f_1$ is injective by Lemma \ref{lem:3-inj}, we have $\alpha=0$. Then we complete the proof of Lemma \ref{lem:sha-inj}.
\end{proof}

Let $G$ be a finite group and $H$ a normal subgroup of $G$. Let $[H,H]$ be the commutator subgroup of $H$ and $H^{ab}=H/[H,H]$. The group extension
\begin{equation}\label{extension}
1\rightarrow H^{ab} \rightarrow G/[H,H]\rightarrow G/H \rightarrow 1
\end{equation}
defines a cohomology class $u\in H^2(G/H, H^{ab})$. Let $A$ be a $G$-module with trivial $H$-action. For any
$p>0$, the cup product defines a morphism
$$u\cup : H^{p-1}(G/H,H^1(H,A))\rightarrow H^{p+1}(G/H, A).$$
For any $x\in H^{p-1}(G/H,H^1(H,A))$, we have $u\cup x=-d_2^{p-1,1}(x)$ by \cite[Theorem 2.1.8]{NSW}, where $d_2^{p-1,1}$ is the differential in Hochschild-Serre's spectral sequence.

Let $P(t)$ be an irreducible polynomial. Let $L=k[t]/(P(t))$. Suppose that the fields $K$  and $L'=L\cap K$ are Galois over $k$. Let $G=\Gal(K/k)$, $H=\Gal(K/L')$. Recall that $e'=\text{gcd}(n,\deg P(t))$ and $l=[L:L']$.


Denote \begin{align}\label{def:C}
	C:&=\{\chi\in H^1(H,\Bbb Q/\Bbb Z): l\Cor_{L'/k}(\chi)(g)=0 \text{ for any }g\in G \text{ with } g^{e'}=1_G\},\\
\label{def:C'}
C':&=\{\sum_{\sigma\in G/H}\chi_\sigma\otimes\sigma\in H^1(H,\Bbb Q/\Bbb Z)\otimes \mathbb Z[G/H]: l\sum_\sigma\chi_{\sigma}(h)=0 \text{ for any }h\in H \text{ with } h^{e'}=1_H\}.
\end{align}


\begin{thm}\label{prop:Br} Let $P(t)$ be an irreducible polynomial and $L=k[t]/(P(t))$. Suppose that the fields $K$  and $L'=L\cap K$ are Galois over $k$. Let $X$ be the variety (\ref{def:X}) defined by  the equation (\ref{eq:norm}). Let $G=\Gal(K/k)$ and $H=\Gal(K/L')$.  Then we have the exact sequences
\small\begin{align}\label{seq:Br}
		 0&\rightarrow H^1(G/H,\Bbb Q/\Bbb Z)\rightarrow H^1(G,\Bbb Q/\Bbb Z)\rightarrow C \rightarrow \Br_\mathrm{un}(X)/\Br_0(X)  \rightarrow \Ker[\Sha^2_\omega(\widehat{T})\rightarrow \Sha^2_\omega(\widehat{T'})]\rightarrow 0,\\
    \notag 
	0&\rightarrow H^1(G/H,\Bbb Q/\Bbb Z)\rightarrow H^1(G,\Bbb Q/\Bbb Z)\rightarrow H^1(H,\Bbb Q/\Bbb Z)^{G/H}\rightarrow H^2(G/H,\mathbb Q/\mathbb Z) \rightarrow \Ker[\Sha^2_\omega(\widehat{T}) \\
	\label{seq:Br2}
&\rightarrow \Sha^2_\omega(\widehat{T'})] \rightarrow H^1(G/H, H^1(H,\mathbb Q/\mathbb Z))
\xrightarrow{(\Res,u\cup)}  H^1(G/H,C')\oplus H^3(G/H,\mathbb Q/\mathbb Z),
\end{align}
where $C$ and $C'$ are defined by (\ref{def:C}) and (\ref{def:C'}), and $\Res$ is induced by the diagonal map $H^1(H,\mathbb Q/\mathbb Z)\to C'$.
\end{thm}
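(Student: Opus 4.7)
The plan is to start from Corollary \ref{cor:seq}, which gives the fundamental exact sequence $0\to H^1(k,\widehat{T'})/j^*(H^1(k,\widehat{T}))\to \Br_\mathrm{un}(X)/\Br_0(X)\to \Ker[\Sha^2_\omega(\widehat{T})\to\Sha^2_\omega(\widehat{T'})]\to 0$. The two sequences in the theorem are obtained by unfolding the left quotient via inflation-restriction (yielding (\ref{seq:Br})) and the right kernel via Hochschild-Serre (yielding (\ref{seq:Br2})). Throughout, set $G=\Gal(K/k)$ and $H=\Gal(K/L')$.

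For (\ref{seq:Br}), I would first compute $H^1(k,\widehat{T})\cong H^1(G,\mathbb{Q}/\mathbb{Z})$ from the standard sequence $0\to\mathbb{Z}\to\mathbb{Z}[K/k]\to\widehat{T}\to 0$. For $H^1(k,\widehat{T'})$ use the defining sequence (\ref{seq:new T'}); since $\mathbb{Z}_P$ and $\mathbb{D}$ are permutation modules (so $H^1(k,-)=0$), one has $H^1(k,\widehat{T'})\cong\Ker[H^2(k,\mathbb{Z}_P)\to H^2(k,\mathbb{D})]$, and Shapiro identifies $H^2(k,\mathbb{Z}_P)\cong H^1(L,\mathbb{Q}/\mathbb{Z})$. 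Analysing each summand of $\mathbb{D}$: the $\mathbb{Z}_P\otimes\mathbb{Z}[K/k]$-summand forces $\chi|_{LK}=0$, so $\chi$ lifts from $H^1(H,\mathbb{Q}/\mathbb{Z})$ via $\Gamma_L\twoheadrightarrow \Gal(LK/L)\cong H$; the $\mathbb{Z}[\Omega]$-piece of $S'$ (together with $\mathbb{Z}[K/k]$ or $\mathbb{Z}$, according as $n\nmid m$ or $n\mid m$) imposes the pointwise condition $l\,\mathrm{Cor}_{L'/k}(\chi)(g)=0$ for every $g\in G$ with $g^{e'}=1_G$, using that every such $g$ stabilises the set $M=\langle g\rangle\cdot F\in\Omega$ for $F$ a system of coset representatives. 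Hence $H^1(k,\widehat{T'})\cong C$. Naturality of the boundary maps in (\ref{diag:T-T'}) identifies $j^*$ with the restriction $\mathrm{Res}_H^G$, whose image lies in $C$ by a direct check using $e'\mid m=l[G:H]$. Inflation-restriction for $H\triangleleft G$ with $\mathbb{Q}/\mathbb{Z}$-coefficients, spliced with Corollary \ref{cor:seq}, then yields (\ref{seq:Br}).

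For (\ref{seq:Br2}), by Lemma \ref{lem:injective-1} one may replace $\widehat{T'}$ by $\widehat{\mathbf{T}'}$, which splits over $K$. Divisibility of $\mathbb{Q}/\mathbb{Z}$ makes $H^2(H',\mathbb{Q}/\mathbb{Z})=0$ for every cyclic $H'\leq G$, so $\Sha^2_\omega(\widehat{T})=H^2(G,\mathbb{Q}/\mathbb{Z})$ and the target kernel becomes $\Ker[\phi:H^2(G,\mathbb{Q}/\mathbb{Z})\to H^2(G,\widehat{\mathbf{T}'})]$. The first five terms of (\ref{seq:Br2}) are the standard five-term inflation-restriction sequence for $\mathbb{Q}/\mathbb{Z}$. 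The map $H^2(G/H,\mathbb{Q}/\mathbb{Z})\to\Ker\phi$ is produced by lifting through the short exact sequence of cokernels $0\to\widehat{T}\to\widehat{\mathbf{T}'}\to Q\to 0$: an inflated class vanishes on $H$ and admits a lift via the connecting map $H^1(G,Q)\to H^2(G,\widehat{T})$, landing in $\Ker\phi$. For the rightmost terms, a computation analogous to $H^1(G,\widehat{\mathbf{T}'})\cong C$ (but over $H$) yields $H^1(H,\widehat{\mathbf{T}'})\cong C'$. A class $\alpha\in\Ker\phi$ outside the image of $H^2(G/H,\mathbb{Q}/\mathbb{Z})$ is detected by its $E_\infty^{1,1}$-component in $H^1(G/H,H^1(H,\mathbb{Q}/\mathbb{Z}))$; its image in $H^1(G/H,C')$ under the $G/H$-equivariant diagonal $H^1(H,\mathbb{Q}/\mathbb{Z})\hookrightarrow C'$ is the first factor of $(\mathrm{Res},u\cup)$, and this detection is faithful by Lemma \ref{lem:sha-inj} applied to $\widehat{\mathbf{T}'}$. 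The second factor into $H^3(G/H,\mathbb{Q}/\mathbb{Z})$ is the Hochschild-Serre differential $d_2:E_2^{1,1}\to E_2^{3,0}$ for $\widehat{T}$, equal to $-(u\cup-)$ by \cite[Theorem 2.1.8]{NSW} with $u$ the extension class (\ref{extension}).

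The main obstacle is matching the Hochschild-Serre spectral-sequence filtrations on $H^2(G,\widehat{T})$ and $H^2(G,\widehat{\mathbf{T}'})$ under $\phi$, so that the three contributions to $\Ker\phi$, namely the inflation from $H^2(G/H,\mathbb{Q}/\mathbb{Z})$, the detection in $H^1(G/H,H^1(H,\mathbb{Q}/\mathbb{Z}))$, and the vanishing of the transgression in $H^3(G/H,\mathbb{Q}/\mathbb{Z})$, are correctly separated. Lemma \ref{lem:sha-inj}, combined with the identification of $d_2$ with cup product by $u$ from \cite[Theorem 2.1.8]{NSW}, provides the crucial inputs to make this unfolding rigorous.
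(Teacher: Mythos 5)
Your proposal is correct and follows essentially the same route as the paper: sequence (\ref{seq:Br}) comes from (\ref{seq:main-1}) together with the identifications $H^1(k,\widehat{T})\cong H^1(G,\Q/\Z)$ and $H^1(k,\widehat{T'})\cong C$ via the permutation resolutions, and sequence (\ref{seq:Br2}) comes from comparing the Hochschild--Serre five-term sequences for $\widehat{T}$ and $\widehat{\mathbf T'}$, using Lemma \ref{lem:injective-1} to pass from $\widehat{T'}$ to $\widehat{\mathbf T'}$, Lemma \ref{lem:sha-inj} for faithful detection in $H^1(G/H,H^1(H,\widehat{\mathbf T'}))\cong H^1(G/H,C')$, and \cite[Theorem 2.1.8]{NSW} to identify $d_2^{1,1}$ with $-(u\cup-)$. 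The only cosmetic difference is that you package the inflation step through a connecting map for $0\to\widehat{T}\to\widehat{\mathbf T'}\to Q\to 0$ rather than through the paper's explicit commutative diagrams (\ref{diag:HT2}) and (\ref{diag: Sha-T}), which amounts to the same spectral-sequence argument.
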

\begin{proof}
	Suppose $e'\neq n$, $i.e.$, $n\nmid \deg P(t)$. Then $S'=\Z[K/k]\oplus\mathbb Z[\Omega]$, where $\Omega=\{M: M\subset G, |M|=e'\}$, see (\ref{D}). As a $G$-module, $\Z[\Omega]$ is the direct sum of submodules which are isomorphic to $\Bbb Z[G/E]$ where $E$ is a subgroup of $G$ with degree dividing $e'$; arbitrary such submodule $\Bbb Z[G/E]$ must occur in $\Z[\Omega]$. Hence, by (\ref{seq:new T'}) and (\ref{big T'}), we have
	\begin{align}\notag
		H^1(k,\widehat{T'})&\cong \Ker[H^2(L,\mathbb Z)\rightarrow  H^2(L\otimes K,\mathbb Z)\oplus H^2(k,S')]\\
		\label{iso:T'}
		&=\Ker[H^2(K.L/L,\mathbb Z) \rightarrow H^2(k,\Z[\Omega])] \cong C, \\
		\label{iso:HT'}
		H^1(H,\widehat{\mathbf T'})&\cong \Ker[f'^*: H^2(H,\mathbb Z)\otimes \mathbb Z[G/H]\rightarrow   H^2(H,\Z[\Omega])]\cong C',
	\end{align}
where $f'^*$ is $-l$ times the natural map.
	It is clear that (\ref{iso:T'}) and (\ref{iso:HT'}) also hold when $e'=n$, i.e., $n\mid \deg{P(t)}$. On the other hand, we have $H^1(k,\widehat{T})\cong  H^1(G,\Bbb Q/\Bbb Z)$ and $H^1(H,\widehat{T})\cong  H^1(H,\Bbb Q/\Bbb Z)$. The first exact sequence then follows from (\ref{seq:main-1}).

	By Hochschild-Serre's spectral sequence, we have the commutative diagram with exact sequences
\small \begin{equation}\label{diagram:H2T}
	\xymatrix {
		H^2(G/H, \widehat {T}^H)\ar[r]\ar[d] &\Ker[H^2(G, \widehat {T}) \ar[r]^{f_1} \ar[d]_{f_2} &H^2(H, \widehat {T})]\ar[r]\ar[d]  &H^1(G/H,H^1(H,\widehat {T}))\ar[r]\ar[d] &H^3(G/H,\widehat {T}^H)\\
				H^2(G/H, \widehat {\mathbf T'}^H)\ar[r]  &\Ker[H^2(G, \widehat {\mathbf T'}) \ar[r] &H^2(H, \widehat {\mathbf T'})]\ar[r]  &H^1(G/H,H^1(H,\widehat {\mathbf T'})).\\	
	}
\end{equation}
 The exact sequence (\ref{big T'}) induces a morphism $\widehat {\mathbf T'}\rightarrow \widehat T\otimes \Bbb Z_{P'}$ by  sending $S'$ to $0$. Note that $H^2(G,\widehat T\otimes \Bbb Z_{P'})\cong H^2(H,\widehat T)$, hence $f_1$ factors through $f_2$,  so $\Ker(f_2)\subset \Ker(f_1)$. By the exact sequence
\begin{equation}\label{seq:T}
	0\rightarrow \mathbb Z \rightarrow \mathbb Z[G]\rightarrow \widehat T\rightarrow 0,
\end{equation}
one obtains $\Sha^2_\omega(\widehat{T})=H^2(G,\widehat{T})$.  Therefore, by Lemma \ref{lem:sha-inj}, the diagram (\ref{diagram:H2T}) yields the commutative diagram
\begin{equation}\label{diag:HT2}
	\xymatrix {
		H^2(G/H, \widehat {T}^H)\ar[r]\ar[d] &\Ker[\Sha^2_\omega(\widehat{T}) \ar[r]^{f_1} \ar[d]_{f_2} &H^2(H, \widehat {T})]\ar[r]\ar[d]  &H^1(G/H,H^1(H,\widehat {T}))\ar[r]\ar[d] &H^3(G/H,\widehat {T}^H)\\
		0\ar[r]  &\Ker[\Sha^2_\omega(\widehat {\mathbf T'}) \ar[r] &H^2(H, \widehat {\mathbf T'})]\ar[r]  &H^1(G/H,H^1(H,\widehat {\mathbf T'})),	
	}
\end{equation}
Let $A=(\Bbb Q\oplus \Bbb Z[G])/\Bbb Z$, then we have the morphisms of spectral sequences
\begin{equation}\label{ss:1}
	E(\Bbb Q/\Bbb Z)\leftarrow E(A) \rightarrow E(\widehat{T}).
\end{equation}
By (\ref{ss:1}) and the exact sequence (\ref{seq:T}), we have the morphism of spectral sequences $ E(\widehat{T})\rightarrow E(\Bbb Q/\Bbb Z) $. By the exact sequence (\ref{big T'}), the diagram (\ref{diag:HT2}) is equivalent to the following commutative diagram
\begin{equation} \label{diag: Sha-T}
	\xymatrix {
		H^2(G/H, \mathbb Q/\mathbb Z)\ar[r]\ar[d] &\Ker[\Sha^2_\omega(\widehat{T}) \ar[r]^{f_1} \ar[d]_{f_2} &H^2(H, \Bbb Q/\Bbb Z)]\ar[r]\ar[d]  &H^1(G/H,H^1(H,\mathbb Q/\mathbb Z))\ar[r]^{u\cup}\ar[d]^{\Res} &H^3(G/H,\mathbb Q/\mathbb Z)\\
		0\ar[r]  &\Ker[\Sha^2_\omega(\widehat {\mathbf T'}) \ar[r] &H^2(H, \widehat {\mathbf T'})]\ar[r]  &H^1(G/H,C').
	}
\end{equation}
By Lemma \ref{lem:injective-1}, $\Ker[\Sha^2_\omega(\widehat{T})\rightarrow \Sha^2_\omega(\widehat{T'})]=\Ker(f_2)\subset \Ker(f_1)$, the proof then follows from the commutativity of the diagram (\ref{diag: Sha-T}).
\end{proof}

\begin{rem} If $L\cap K=k$, one obtains  $\Br_\mathrm{un}(X)=\Br_0(X)$ by Theorem \ref{prop:Br} and this result has been already proved in \cite[Theorem 3.2]{Wei14}.
\end{rem}

For any finite group $M$, we denote  $[M,M]$ the commutator subgroup  of $M$ and $M^{ab}:=M/[M,M]$. 
\begin{cor} Let $X,G$ and $H$ be defined as in Theorem \ref{prop:Br}. Suppose $H^{ab}=0$. Then we have $\Br_\mathrm{un}(X)/\Br_0(X) \cong H^3(G/H,\Bbb Z)$. Furthermore, if we assume that $G/H$ is cyclic, then $\Br_\mathrm{un}(X)=\Br_0(X)$.
\end{cor}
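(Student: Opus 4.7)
The plan is to extract both assertions directly from Theorem~\ref{prop:Br} by feeding in the consequence of $H^{\ab}=0$, namely that $H^1(H,\Q/\Z)=\Hom(H^{\ab},\Q/\Z)=0$.

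First I would exploit this vanishing in the first sequence~(\ref{seq:Br}). Since $C$ is defined in~(\ref{def:C}) as a subgroup of $H^1(H,\Q/\Z)$, it must vanish. The sequence~(\ref{seq:Br}) then collapses to an isomorphism
$$\Br_{\mathrm{un}}(X)/\Br_0(X)\;\cong\;\Ker\bigl[\Sha^2_\omega(\widehat T)\to\Sha^2_\omega(\widehat{T'})\bigr].$$

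Next I would analyse the second sequence~(\ref{seq:Br2}) under the same vanishing. The terms $H^1(H,\Q/\Z)^{G/H}$ and $H^1(G/H,H^1(H,\Q/\Z))$ are both zero, and by~(\ref{def:C'}) so is $C'$. The seven-term sequence therefore degenerates and produces an isomorphism $H^2(G/H,\Q/\Z)\cong\Ker[\Sha^2_\omega(\widehat T)\to\Sha^2_\omega(\widehat{T'})]$. I would then convert from $\Q/\Z$ to $\Z$ coefficients by the long exact sequence attached to $0\to\Z\to\Q\to\Q/\Z\to 0$, using that $H^i(G/H,\Q)=0$ for $i\geq 1$ since $G/H$ is finite; this shifts the coefficient by one and gives $H^2(G/H,\Q/\Z)\cong H^3(G/H,\Z)$. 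Combining with the first step yields the first claim $\Br_{\mathrm{un}}(X)/\Br_0(X)\cong H^3(G/H,\Z)$.

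For the second claim, if $G/H$ is cyclic then its cohomology with $\Z$ coefficients is $2$-periodic in positive degrees, and $H^1(G/H,\Z)=\Hom(G/H,\Z)=0$ since $G/H$ is finite. Hence $H^3(G/H,\Z)=0$, and therefore $\Br_{\mathrm{un}}(X)=\Br_0(X)$. The whole argument is a formal chase along the two exact sequences of Theorem~\ref{prop:Br}; the substantive content has already been absorbed there, so I do not foresee any genuine obstacle beyond matching up the vanishing hypothesis with the defining conditions of $C$ and $C'$.
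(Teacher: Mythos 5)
Your argument is correct and is essentially the paper's own proof: both deduce $C=0$ and $H^1(H,\Q/\Z)=0$ from $H^{\ab}=0$, read off $\Br_\mathrm{un}(X)/\Br_0(X)\cong\Ker[\Sha^2_\omega(\widehat T)\to\Sha^2_\omega(\widehat{T'})]\cong H^2(G/H,\Q/\Z)\cong H^3(G/H,\Z)$ from the two exact sequences of Theorem~\ref{prop:Br}, and conclude in the cyclic case via the periodicity isomorphism $H^3(G/H,\Z)\cong H^1(G/H,\Z)=0$. Your write-up merely makes explicit the diagram chase that the paper leaves implicit.
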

\begin{proof} Since $H^{ab}=0$, we have $C=0$ and $H^1(H,\Bbb Q/\Bbb Z)=0$. By Theorem \ref{prop:Br}, we have $$\Br_\mathrm{un}(X)/\Br_0(X) \cong H^2(G/H,\Bbb Q/\Bbb Z)\cong H^3(G/H,\Bbb Z). $$
If $G/H$ is cyclic, then $H^3(G/H,\Bbb Z)\cong H^1(G/H,\Bbb Z)=0. $
\end{proof}

\begin{lem} \label{cor:abel}
	\begin{enumerate}[1)]
	\item Suppose that $G/[H,H]$ is abelian. Then $\Br_\mathrm{un}(X)/\Br_0(X)$ is  isomorphic to the subgroup $B$ of $H^2(G, \Q/\Z)$ and the following exact sequence holds
	\begin{align}\notag
		0\rightarrow H^2(G/H, \Q/\Z) \rightarrow &B \rightarrow H^1(G/H, H^1(H,\Q/\Z)) \\
		\label{seq:abel}
		&\xrightarrow{(\Res, u\cup)}H^1(G/H,C')\oplus H^3(G/H,\mathbb Q/\mathbb Z);
	\end{align}

\item Let $P(t)$ be an irreducible polynomial and $L=k[t]/(P(t))$. Let $L'\subset L$ be a Galois extension of $k$. Let $F/k$ be a Galois field extension and $F\cap L=k$.  Suppose $K=L'. F$.
Then $$\Br_\mathrm{un}(X)/\Br_0(X)\cong  H^2(L'/k, \Q/\Z)\oplus \Delta,$$ where $\Delta$ is the kernel of the map $\Res: H^1(L'/k, H^1(F/k,\Q/\Z)) \rightarrow H^1(L'/k,C')$.
	\end{enumerate}
\end{lem}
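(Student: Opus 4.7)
The approach is to specialise the exact sequences (\ref{seq:Br}) and (\ref{seq:Br2}) of Theorem \ref{prop:Br}. My starting point will be to identify $K := \Ker[\Sha^2_\omega(\widehat T) \to \Sha^2_\omega(\widehat{T'})]$ with a subgroup of $H^2(G,\Q/\Z)$: from the short exact sequence $0 \to \Z \to \Z[G] \to \widehat T \to 0$ and the fact that $\Z[G]$ is coinduced, one has $\Sha^2_\omega(\widehat T) = H^2(G,\widehat T) \cong H^3(G,\Z) \cong H^2(G,\Q/\Z)$, which justifies viewing the subgroup $B$ of the statement inside $H^2(G,\Q/\Z)$.

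For part (1), the hypothesis that $G/[H,H]$ is abelian implies $[G,G]=[H,H]$, so $G/H$ acts trivially on $H^{\ab}$ and hence on $H^1(H,\Q/\Z)$. Dualising the short exact sequence of abelian groups $0 \to H^{\ab} \to G/[H,H] \to G/H \to 0$ via $\Hom(-,\Q/\Z)$ shows that restriction $H^1(G,\Q/\Z) \to H^1(H,\Q/\Z)$ is surjective. I will feed this surjectivity into both (\ref{seq:Br2}) and (\ref{seq:Br}): in the former the transgression $H^1(H,\Q/\Z)^{G/H} \to H^2(G/H,\Q/\Z)$ vanishes, so $H^2(G/H,\Q/\Z) \hookrightarrow K$; in the latter the image of the map $H^1(G,\Q/\Z) \to C$ (which factors through restriction) is all of $H^1(H,\Q/\Z) \supseteq C$, forcing $C = H^1(H,\Q/\Z)$ and the map $C \to \Br_\mathrm{un}(X)/\Br_0(X)$ to be zero. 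Hence $\Br_\mathrm{un}(X)/\Br_0(X) \cong K$, and splicing with the tail of (\ref{seq:Br2}) yields the required exact sequence (\ref{seq:abel}).

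For part (2), I first verify that Theorem \ref{prop:Br} applies: $K = L'.F$ is Galois as a compositum of Galois extensions, and $L \cap K = L'$ follows from $L' \subset L$, $K = L'.F$ and $F \cap L = k$. From $L' \cap F \subset L \cap F = k$ one concludes that $G = \Gal(L'/k) \times \Gal(F/k) = (G/H) \times H$ is a direct product, with $H$ identified with $\Gal(F/k)$. The direct product structure supplies three key facts: (a) $G/H$ acts trivially on $H$, so $H^1(H,\Q/\Z)^{G/H} = H^1(H,\Q/\Z)$; (b) any character of $H$ extends to $G$ via the projection $G \to H$, so restriction $H^1(G,\Q/\Z) \to H^1(H,\Q/\Z)$ is surjective; (c) the extension $1 \to H^{\ab} \to G/[H,H] \to G/H \to 1$ splits, so its class $u$ vanishes and $u \cup$ is the zero map. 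Facts (a) and (b) let the argument of part (1) proceed \emph{without} requiring $G/[H,H]$ to be abelian, and together with (c) the sequence (\ref{seq:abel}) collapses to $0 \to H^2(G/H,\Q/\Z) \to K \to \Delta \to 0$.

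To split this extension I will exploit the canonical section $s: G/H \hookrightarrow G$ of the direct product projection. This section retracts the inflation $H^2(G/H,\Q/\Z) \to H^2(G,\Q/\Z)$, forces the Hochschild-Serre spectral sequence for $1 \to H \to G \to G/H \to 1$ with coefficients $\Q/\Z$ to degenerate at $E_2$, and yields a canonical K\"unneth-type decomposition $H^2(G,\Q/\Z) \cong H^2(G/H,\Q/\Z) \oplus H^1(G/H, H^1(H,\Q/\Z)) \oplus H^2(H,\Q/\Z)$. Intersecting $K$ with this decomposition will produce the asserted isomorphism $K \cong H^2(G/H,\Q/\Z) \oplus \Delta$. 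The main obstacle will be verifying that the embedding $K \hookrightarrow H^2(G,\Q/\Z)$ from the first paragraph is compatible with the canonical splitting coming from $s$ --- equivalently, that the composite $H^2(G/H,\Q/\Z) \hookrightarrow K \hookrightarrow H^2(G,\Q/\Z)$ agrees with the inflation map which is split by $s^{*}$, and that the second projection of this decomposition restricts to $K \twoheadrightarrow \Delta$ in the expected way.
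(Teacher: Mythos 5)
Your proposal follows the paper's own route almost exactly: both parts are obtained by specialising the two exact sequences of Theorem \ref{prop:Br}, checking that $H^1(G,\Q/\Z)\to C$ and $H^1(G,\Q/\Z)\to H^1(H,\Q/\Z)$ are surjective (which kills the contribution of $C$ and identifies $\Br_\mathrm{un}(X)/\Br_0(X)$ with $\Ker[\Sha^2_\omega(\widehat{T})\to\Sha^2_\omega(\widehat{T'})]\subset H^2(G,\Q/\Z)$), and then using $u=0$ for the split extension in case 2). The only genuine divergence is the final splitting of $0\to H^2(G/H,\Q/\Z)\to B\to\Delta\to 0$: the paper invokes Jannsen's theorem on the splitting of the Hochschild--Serre spectral sequence for a product of groups, whereas you build a retraction from the section $s\colon G/H\to G$; this is in fact more economical, since all you need is that the composite $H^2(G/H,\Q/\Z)\to B\hookrightarrow H^2(G,\Q/\Z)$ is inflation and that $s^{*}\circ\mathrm{inf}=\mathrm{id}$. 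One caveat: your intermediate assertion that the existence of a section already forces degeneration at $E_2$ and a full K\"unneth-type decomposition of $H^2(G,\Q/\Z)$ is not justified by the section alone (that stronger statement is exactly Jannsen's theorem and uses the direct-product structure); but this overstatement is harmless, because the retraction of inflation is all your argument actually uses.
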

\begin{proof} For the case 2), we have $G=\Gal(K/k) \cong \Gal(F/k)\times \Gal(L'/k)$, $H=\Gal(K/L')\cong \Gal(F/k)$, $G/H\cong \Gal(L'/k)$ and  $K\cap L=L'$.
	In both cases, the two maps $H^1(G,\Bbb Q/\Bbb Z)\rightarrow C$ in (\ref{seq:Br}) and $H^1(G,\Bbb Q/\Bbb Z)\rightarrow H^1(H,\Bbb Q/\Bbb Z)$
 in (\ref{seq:Br2}) are surjective, hence the exact sequence (\ref{seq:abel}) holds for both cases by Theorem \ref{prop:Br}. Now it remains to prove the case 2). Since $\Gal(K/k) \cong \Gal(F/k)\times \Gal(L'/k)$, the cohomology class $u=0$ since the sequence (\ref{extension}) is split and the third map in (\ref{seq:abel}) is split by the main theorem of~\cite{Jan90}. The proof for the case 2) then follows.
\end{proof}

\begin{lem}\label{lem:map-0}
Suppose $G/H$ trivially acts on $H^1(H,\Bbb Q/\Bbb Z)$ ($e.g.$, $G/[H,H]$ is abelian). Let $\chi\in H^1(G/H,H^1(H,\Bbb Q/\Bbb Z))$ have order $r$. Then 
\begin{equation}\label{eq-zero}
	2\Res(\chi)=0 \in H^1(G/H,C').
\end{equation} 
Furthermore, if $r$ is odd or $2\mid l\#\Ker(\chi_h)$ for any $h\in H$ with $h^{e'}=1_{H}$, then $$\Res(\chi)=0,$$
where $l=[L:L']$ and $\chi_h: G/H\to \Q/\Z$ by sending $\sigma\in G/H$ to $\chi(\sigma)(h)$.
\end{lem}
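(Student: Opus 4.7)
The plan is to write down an explicit bounding cochain for (a multiple of) $\Res(\chi)$ inside $V\otimes\Z[G/H]$, where $V:=H^1(H,\Q/\Z)$ carries the trivial $G/H$-action by hypothesis, and then to determine when this cochain lives in the subgroup $C'$. Under the identification (\ref{iso:HT'}), the $G/H$-action on $V\otimes\Z[G/H]$ is the tensor of the trivial action on $V$ with left translation on $\Z[G/H]$, so the ambient module $V\otimes\Z[G/H]$ is coinduced from the trivial subgroup of $G/H$ and has vanishing positive cohomology. Consequently $\Res(\chi)$ is automatically a coboundary in $V\otimes\Z[G/H]$ and the entire question reduces to arranging that the bounding cochain already lies in $C'$.

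For any integer $c$, I will use the candidate
\begin{equation*}
b \;=\; -c\sum_{\tau\in G/H}\chi(\tau)\otimes\tau \;\in\; V\otimes\Z[G/H].
\end{equation*}
Because $G/H$ acts trivially on $V$, the cocycle $\chi$ is a genuine homomorphism, so $\chi(\sigma^{-1}\tau)=\chi(\tau)-\chi(\sigma)$; a short direct calculation then gives $\sigma\cdot b - b = c\cdot\Res(\chi)(\sigma)$. Thus as soon as $b\in C'$, the element $c\Res(\chi)$ vanishes in $H^1(G/H,C')$. Unwinding the defining condition (\ref{def:C'}), the membership $b\in C'$ reduces to the single arithmetic requirement
\begin{equation*}
cl\sum_{\tau\in G/H}\chi_h(\tau)=0 \text{ in } \Q/\Z
\quad\text{for every } h\in H \text{ with } h^{e'}=1_H.
\end{equation*}

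The key remaining step is an elementary evaluation of this character sum. Writing $K_h:=\Ker(\chi_h)$ and $m:=|\mathrm{Im}(\chi_h)|$, the grouping of $\tau$'s by common $\chi_h$-value produces
\begin{equation*}
\sum_{\tau\in G/H}\chi_h(\tau) \;\equiv\; |K_h|\cdot\tfrac{m-1}{2}\pmod 1.
\end{equation*}
Doubling makes the right-hand side the integer $|K_h|(m-1)$, so $c=2$ works unconditionally and proves (\ref{eq-zero}). For the sharper statement with $c=1$ I will distinguish two cases: if $r$ is odd, then $m$ divides $r$ is also odd, $(m-1)/2\in\Z$, and the sum already vanishes; if instead $2\mid l\,\#\Ker(\chi_h)$ for every relevant $h$, then the only non-trivial case is $m$ even, in which the sum reduces to $|K_h|/2\in\tfrac12\Z/\Z$ and the hypothesis forces $l$ times it to vanish. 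In each case $b\in C'$ with $c=1$, giving $\Res(\chi)=0$.

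I expect no genuine obstacle beyond the character-sum identity above: the form of the bounding cochain is essentially forced by the coinduction structure, and the refinements in the ``furthermore'' clause are a simple case-analysis driven by the parity of $m$ and of $|K_h|$.
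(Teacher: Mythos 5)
Your proposal is correct and follows essentially the same route as the paper: the same bounding cochain $-\sum_{\tau}\chi(\tau)\otimes\tau$ (the paper's element $a$, your $b$ with $c=1$), the same coboundary computation showing it bounds $\Res(\chi)$ in $H^1(H,\Q/\Z)\otimes\Z[G/H]$, and the same character-sum evaluation $l\,\#\Ker(\chi_h)\tfrac{m-1}{2}$ governing membership in $C'$, with the identical parity case analysis for the ``furthermore'' clause. The only difference is cosmetic (the auxiliary integer $c$ and the remark that the ambient module is coinduced).
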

\begin{proof}
Let $a:=-\sum_{\sigma\in G/H}\chi(\sigma)\otimes\sigma\in H^1(H,\Bbb Q/\Bbb Z)\otimes \mathbb Z[G/H]$. For any $\tau \in G/H$, then
\begin{align} \notag
	\partial^1(a)(\tau):&=\tau a-a=-\sum_{\sigma\in G/H}\chi(\sigma)\otimes\tau\sigma+\sum_{\sigma\in G/H}\chi(\sigma)\otimes\sigma\\
	\label{compute:a}
	&=-\sum_{\sigma\in G/H}\chi(\tau^{-1}\sigma)\otimes\sigma+\sum_{\sigma\in G/H}\chi(\sigma)\otimes\sigma=\chi(\tau)\otimes \sum_{\sigma\in G/H}\sigma.
\end{align}
Therefore $\partial^1(a)=\Res(\chi)\in C^1(G/H,H^1(H,\Bbb Q/\Bbb Z)\otimes \mathbb Z[G/H])$. To complete the proof, we only need to show $a\in C'$. For any $h\in H$, one has \begin{equation}\label{sum=0}
	l\sum_{\sigma\in G/H} \chi(\sigma)(h)=l\#\Ker(\chi_h)r'(r'-1)/2\cdot \psi \in  \Bbb Q/\Bbb Z,\end{equation}
where $\psi$ is a generator of the image of $\chi_h$ in $\Q/\Z$ which has order $r'$. Obviously $2a\in C'$, hence we proved (\ref{eq-zero}).  Since $r'\mid r$,
if $r$ is odd or $2\mid [L:L']\#\Ker(\chi_h)$, then the sum in (\ref{sum=0}) is zero, hence $a\in C'$, the proof then follows.
\end{proof}

 Let $V$ be the maximal smooth open subset of (\ref{eq:norm}).  Let $V^{\mathrm{CTHS}}$ be the
CTHS-partial compactification of $V$ (see  \S 2 or \cite{CHS}). By Theorem \ref{prop:Br}, we immediately obtain the following result which is slightly more general than \cite[Theorem 3.6]{Wei14}.
\begin{cor}\label{wei14}  Suppose $G/[H,H]$ is an abelian group. Then:
	\begin{enumerate}[(a)]
		\item The quotient $\Br(V^{\mathrm{CTHS}})/\Br_\mathrm{un}(X)$ is $2$-torsion.
		
		\item $\Br_\mathrm{un}(X)=\Br(V^{\mathrm{CTHS}})$ if one of the following conditions holds:
		\begin{enumerate}[(1)]
			\item either $H^{ab}$  or $G/H$ has odd order, or $G \cong \Z/2^i \times A$, where $A$ has odd order;
			
			\item $[L:L\cap K]$ is even;
			
			\item there exists $s\geq 1$ such that $2^{s}\mid \#(G/H)$ and $\# H^{ab}=2^{s-1}d$, where $d$ is odd;
			
			\item $L/k$ contains an abelian subfield $L''/k$ with
			$\Gal(L''/k)\cong(\Bbb Z/2\Bbb Z)^2$.
		\end{enumerate}
	\end{enumerate}
\end{cor}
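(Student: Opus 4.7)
The plan is to compare the description of $\Br_{\mathrm{un}}(X)/\Br_0(X)$ given by Lemma \ref{cor:abel}(1) with the analogous formula for $\Br(V^{\mathrm{CTHS}})/\Br_0(V^{\mathrm{CTHS}})$ coming from \cite[Proposition 2.5]{CHS}. Under the abelianness of $G/[H,H]$, both quotients sit in five-term sequences with left piece $H^2(G/H,\Q/\Z)$; the only difference is that the rightward map out of $H^1(G/H,H^1(H,\Q/\Z))$ for $X$ is $(\Res,u\cup)$, while for $V^{\mathrm{CTHS}}$ it is only $u\cup$. Since $V^{\mathrm{CTHS}}\subset X$ is open, $\Br_{\mathrm{un}}(X)\subset \Br(V^{\mathrm{CTHS}})$, and a short diagram chase embeds $\Br(V^{\mathrm{CTHS}})/\Br_{\mathrm{un}}(X)$ into the image of $\Res$ restricted to $\Ker(u\cup)$. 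By Lemma \ref{lem:map-0} this image is $2$-torsion, which proves (a).

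For (b) it suffices, in each listed case, to show that $\Res$ vanishes on $\Ker(u\cup)$. Under (1), if $H^{\mathrm{ab}}$ or $G/H$ has odd order then $H^1(G/H,H^1(H,\Q/\Z))$ has odd exponent, so every $\chi$ has odd order and the ``$r$ odd'' clause of Lemma \ref{lem:map-0} concludes. In the remaining subcase $G\cong \Z/2^i\times A$ with $A$ odd, the Hochschild--Serre five-term sequence identifies $\Ker(u\cup)$ with a quotient of $H^2(G,\Q/\Z)$; for the abelian $G$ in question the latter is the Schur multiplier $\wedge^2 G$, and $\gcd(2,\#A)=1$ collapses it to $\wedge^2 A$, which has odd order. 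Condition (2) is immediate from Lemma \ref{lem:map-0}: $2\mid l$ ensures $2\mid l\cdot\#\Ker(\chi_h)$ for every relevant $h$.

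For condition (3), note that $\chi_h\colon G/H\to\Q/\Z$ has cyclic image whose order divides the order of $h$ in $H^{\mathrm{ab}}$, hence divides $\#H^{\mathrm{ab}}$, so its $2$-part is at most $2^{s-1}$. Since $2^s$ divides $\#(G/H)$, we obtain that $\#\Ker(\chi_h)=\#(G/H)/\#\mathrm{Image}(\chi_h)$ is even, and Lemma \ref{lem:map-0} applies. For condition (4), I would split according to whether $L''\subset K$. If $L''\not\subset K$, then $[L'':L''\cap K]\geq 2$, so $[L''\cdot K:K]\geq 2$ divides $[L\cdot K:K]=l$; hence $2\mid l$ and we reduce to case (2). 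If instead $L''\subset K$, then $L''\subset L\cap K=L'$, so $G/H=\Gal(L'/k)$ admits a $(\Z/2\Z)^2$ quotient. Writing the abelian group $G/H$ as a product of cyclic groups with at least two even factors yields $\#(G/H)_2\geq 2\cdot\exp(G/H)_2$; since $\#\mathrm{Image}(\chi_h)$ divides $\exp(G/H)$, we again conclude that $\#\Ker(\chi_h)$ is even.

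I expect the main technical point to be making rigorous the comparison between the CHS description of $\Br(V^{\mathrm{CTHS}})/\Br_0$ and the description of $\Br_{\mathrm{un}}(X)/\Br_0$ from Lemma \ref{cor:abel}(1); one must confirm that the two five-term sequences agree in all terms except that the $\Res$-factor is absent for $V^{\mathrm{CTHS}}$, which should follow by rerunning the proofs of Lemma \ref{lem:rem} and Lemma \ref{cor:abel} with the exceptional $S'$-component suppressed.
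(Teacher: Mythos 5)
Your proposal is correct and follows essentially the same route as the paper: it combines the sequence of Lemma \ref{cor:abel}(1) with the CHS description $\Br(V^{\mathrm{CTHS}})/\Br_0\cong\Ker[H^2(G,\Q/\Z)\to H^2(H,\Q/\Z)]$, identifies the discrepancy with the image of $\Res$ on $\Ker(u\cup)$, and then invokes Lemma \ref{lem:map-0} case by case exactly as the paper does (including the same dichotomy $L''\subset L'$ versus $L''\not\subset L'$ in case (4)). Your explicit counting arguments for (3) and (4) via $\#\Ker(\chi_h)=\#(G/H)/\#\mathrm{Im}(\chi_h)$ merely fill in details the paper leaves implicit, and your appeal to $\wedge^2 G$ in the subcase $G\cong\Z/2^i\times A$ is a harmless imprecision when $A$ is nonabelian, since the needed fact that $H^2(G,\Q/\Z)\cong H^2(A,\Q/\Z)$ has odd order holds in general.
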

\begin{proof}
	Since $G/[H,H]$ is an abelian group, by \cite[Proposition 2.5]{CHS}, one has
	\begin{equation}\label{seq:CTHS}
		\Br(V^{\mathrm{CTHS}})/\Br_0(V^{\mathrm{CTHS}})\cong \Ker[\Sha^2_\omega(\widehat{T})\rightarrow H^2(H, \widehat{T})]\cong \Ker[H^2(G,\Q/\Z)\rightarrow H^2(H,\Q/\Z)].
	\end{equation}
	Since $G/[H,H]$ is an abelian group, $ H^1(G,\Q/\Z)\rightarrow C$ is surjective. By Theorem (\ref{prop:Br}),  the case (a) follows from Lemma \ref{lem:map-0}. If $\Res:  H^1(G/H, H^1(H,\mathbb Q/\mathbb Z))
	\rightarrow H^1(G/H,C')$ is a zero map, comparing  Theorem \ref{prop:Br} with (\ref{seq:CTHS}),   we have $\Br_\mathrm{un}(X)=\Br(V^{\mathrm{CTHS}})$.
	
	Let $L'=L\cap K$ and $l=[L:L']$. If $G \cong \Z/2^i \times A$ and $A$ has odd order, then $H^2(G,\Q/\Z) \cong H^2(A, \Q/\Z)$ has odd order, hence the image of $B\to H^1(G/H,H^1(H,\Q/\Z))$ in (\ref{seq:abel}) has odd order.
	In the following we will apply Lemma \ref{lem:map-0}, it is clear that $r$ is odd for the case (b-1), $2\mid l$ for the case (b-2) and $2\mid \#\Ker(\chi_h)$ for the case (b-3), the proof then follows from Lemma \ref{lem:map-0}. For the case (b-4), if $L''\not \subset L'$, then $2\mid[L:L']$, the proof follows from the case (b-2), hence we may assume $L''\subset L'$, then one obtains $2\mid \#\Ker(\chi_h)$,  the proof then follows from Lemma~\ref{lem:map-0}.
\end{proof}
\begin{rem}
	We can compare this corollary with \cite[Theorem 3.6]{Wei14}. Replacing the  assumption that $G$ is abelian in \cite[Theorem 3.6]{Wei14}  we only need to assume that $G/[H,H]$ is abelian. In the case case (b-1) we obtains the new case that $H^{ab}$ is odd order, and the case (b-4) is slightly more general than the case (b-5) of \cite[Theorem 3.6]{Wei14} where the assumption needs $\Gal(L''/k)\cong(\Bbb Z/2\Bbb Z)^3$.
\end{rem}

The following result gives an explicit description for the case 2) of Lemma \ref{cor:abel}.
\begin{prop}\label{prop:split} Let $P(t)$ be an irreducible polynomial and $L=k[t]/(P(t))$. Let $L'\subset L$ be a Galois extension over $k$. Let $F/k$ be a Galois field extension and $F\cap L=k$.  Let $K=L'. F$ and $H=\Gal(K/L')\cong \Gal(F/k)$.	
	 Let $X$ be the variety (\ref{def:X}) defined by the equation (\ref{eq:norm}).	Let $l= [L:L']$ and $\rho=\#[\Gal(L'/k),\Gal(L'/k)]$. We write $$
	 		H^{ab}=H_2\times H_{odd}, \Gal(L'/k)^{ab}=H'_2\times H'_{odd},
	 	$$ where  $H_2 \text{ and } H'_2$ have $2$-power order, $H_{odd}\text{ and } H'_{odd}$ have odd order.
	 
	 Suppose  that either $2\mid l\rho$ or  $H'_2$ is zero or non-cyclic. Then $$\Br_\mathrm{un}(X)/\Br_0(X)\cong H^2(L'/k,\Q/\Z)\times H^1(L'/k,H^1(F/k,\Q/\Z)).$$
	 	
	Suppose  $2\nmid l\rho$ and $H'_2\cong \Z/2^{s}$ with $s\geq 1$. 	Let $\widetilde H_{2}\times E$ be the image of $\{h\in H: h^{2^s}=1_H\}$ by the projection $H\rightarrow H^{ab}$, where $\widetilde H_{2}\cong (\Z/2^s)^{u_1}$ and $E$ is killed by $2^{s-1}$. We write $$H_2=\widetilde H_{2}\times (\Z/2^s)^{u_2} \times\prod_{i=1}^v\Z/2^{s_i}\times \prod_{j=1}^w\Z/2^{t_j}$$ with $s_i<s$  and $t_j >s$ for any $i,j$.
	Then $$\Br_\mathrm{un}(X)/\Br_0(X)\cong H^2(L'/k,\Q/\Z)\times H^1(H'_{odd},H^1(H_{odd},\Q/\Z))\times (\Z/2^{s-1})^{u_1}\times (\Z/2^{s})^{w+u_2}\times \prod_{i=1}^v\Z/2^{s_i}.$$

\end{prop}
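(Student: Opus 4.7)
The starting point is Lemma \ref{cor:abel}(2): since $L'\cap F=k$ and both $L'/k$, $F/k$ are Galois, $\Gal(K/k)=(G/H)\times H$ as a direct product, so
$$\Br_\mathrm{un}(X)/\Br_0(X)\cong H^2(L'/k,\Q/\Z)\oplus\Delta,\quad \Delta:=\Ker[\Res\colon H^1(L'/k,H^1(F/k,\Q/\Z))\to H^1(L'/k,C')].$$
The plan is to compute $\Delta$ explicitly. Because $G/H$ acts trivially on $H^1(H,\Q/\Z)$, we have $H^1(L'/k,H^1(F/k,\Q/\Z))=\Hom((G/H)^{ab},H^1(H^{ab},\Q/\Z))$, which decomposes as $\Hom(H'_2,\Hom(H_2,\Q/\Z))\oplus\Hom(H'_{odd},\Hom(H_{odd},\Q/\Z))$ by coprime orders, so every $\chi$ writes as $\chi_2+\chi_{odd}$.

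From the proof of Lemma \ref{lem:map-0}, $\Res(\chi)=0$ is equivalent to the existence of $\chi_1\in\Hom(H^{ab},\Q/\Z)$ with
$$ ln\,\chi_1(h)=l\sum_{\sigma\in G/H}\chi(\sigma)(h)\quad\text{in }\Q/\Z\text{ for every }h\in H\text{ with }h^{e'}=1_H, $$
where $n=[K:k]$. Using that $\chi$ kills $[G/H,G/H]$, one rewrites $\sum_\sigma\chi(\sigma)=\rho\,\chi\bigl(\sum_{\bar\sigma\in(G/H)^{ab}}\bar\sigma\bigr)$. The sum of all elements of a finite abelian group vanishes unless the group is cyclic of even order, in which case it equals the unique element of order two. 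This gives $\sum_{\tau\in H'_{odd}}\chi_{odd}(\tau)=0$ identically, hence $\Res(\chi_{odd})=0$ and $H^1(H'_{odd},H^1(H_{odd},\Q/\Z))\subset\Delta$, while
$$ \sum_{\tau\in H'_2}\chi_2(\tau)=\begin{cases} 0 & \text{if }H'_2=0\text{ or }H'_2\text{ is non-cyclic},\\ 2^{s-1}\chi_0 & \text{if }H'_2\cong\Z/2^s,\ s\geq 1,\end{cases} $$
where $\chi_0=\chi_2(\text{generator})\in\Hom(H_2,\Q/\Z)$ satisfies $2^s\chi_0=0$.

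In Case 1 the coefficient $l\rho\,\#H'_{odd}\cdot 2^{s-1}$ of $\chi_0$ in $l\sum_\sigma\chi_2(\sigma)$ either vanishes (when $H'_2=0$ or is non-cyclic) or is divisible by $2^s$ (when $2\mid l\rho$); in either case it kills $\chi_0$, so $\chi_1=0$ solves the equation and $\Delta=H^1(L'/k,H^1(F/k,\Q/\Z))$. In Case 2, $l$, $\rho$, $\#H'_{odd}$ are odd, act invertibly on $2$-power torsion, and the equation on the $2$-primary part of $\{h:h^{e'}=1_H\}$ reduces to $2^s n_H\chi_1(h)=2^{s-1}\chi_0(h)$ with $n_H=\#H$. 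For $h\in E$ both sides vanish (since $h$ has order at most $2^{s-1}$), and for $h\in\widetilde H_2$ the left-hand side vanishes because $2^s\chi_1(h)=0$, so the only surviving requirement is $2^{s-1}\chi_0|_{\widetilde H_2}=0$, i.e.\ $\chi_0|_{\widetilde H_2}$ takes values in $\tfrac{1}{2^{s-1}}\Z/\Z$. Decomposing $\chi_0\in\Hom(H_2,\tfrac{1}{2^s}\Z/\Z)$ along the given factorization of $H_2$ yields $(\Z/2^s)^{u_1}\oplus(\Z/2^s)^{u_2}\oplus\prod_{i=1}^v\Z/2^{s_i}\oplus\prod_{j=1}^w\Z/2^s$ (the last summand because $\Hom(\Z/2^{t_j},\tfrac{1}{2^s}\Z/\Z)\cong\Z/2^s$ for $t_j>s$), and the condition cuts the first summand to $2(\Z/2^s)^{u_1}\cong(\Z/2^{s-1})^{u_1}$, leaving the others intact; this yields the asserted formula. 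The delicate step is the $2$-adic bookkeeping of Case 2, in particular verifying that the $2$-part of the image of $\{h\in H:h^{e'}=1_H\}$ in $H^{ab}$ is exactly $\widetilde H_2\times E$ (which rests on the fact that under the Case 2 hypotheses the $2$-part of $e'$ equals $2^s$) and that the $E$-component imposes no actual constraint.
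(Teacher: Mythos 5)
Your proposal reaches the correct formulas and the overall strategy (reduce to computing $\Delta$ via Lemma \ref{cor:abel}(2), then analyse the $2$\nobreakdash-primary part) agrees with the paper, but your treatment of case~2 is genuinely different. The paper splits $H^1(\Z/2^s,H^1(H_2,\Q/\Z))$ into the four factors $A_1,\dots,A_4$ of (\ref{def:A}) and handles each one separately: the sufficient criterion of Lemma \ref{lem:map-0} ($2\mid\#\Ker(\chi_h)$) for $A_2,A_3,A_4$ and for $2A_1$, and then a separate explicit non-vanishing computation --- projecting onto the direct factor $\Theta$ of $C'$ and evaluating a connecting homomorphism --- to show order-$2^s$ elements of $A_1$ survive. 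You instead upgrade Lemma \ref{lem:map-0} to an if-and-only-if criterion (solvability of a linear equation in an auxiliary $\chi_1$) and solve it uniformly; this is cleaner and subsumes all four factors plus the non-vanishing statement in one computation.

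Two points need repair before this stands. First, the ``equivalence'' you invoke does \emph{not} follow from the proof of Lemma \ref{lem:map-0}, which only shows that $a\in C'$ \emph{suffices} for $\Res(\chi)=0$; the converse direction is exactly what carries the non-vanishing on $A_1$, so it is load-bearing. It is true, but you must argue it: since $\Res(\chi)=\partial^1(a)$ with $a\in H^1(H,\Q/\Z)\otimes\Z[G/H]$, the class vanishes in $H^1(G/H,C')$ iff $a$ differs from an element of $C'$ by an element of $\ker\partial^1=\bigl(H^1(H,\Q/\Z)\otimes\Z[G/H]\bigr)^{G/H}=H^1(H,\Q/\Z)\otimes N_{G/H}$ (using that $G/H$ acts trivially on $H^1(H,\Q/\Z)$ and permutes the basis of $\Z[G/H]$ regularly). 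Second, once you write it this way the coefficient of $\chi_1(h)$ is $l\cdot\#(G/H)=l[L':k]$, not $ln$ with $n=[K:k]$; your coefficient is off by a factor of $\#H$. This happens to be harmless here --- the $2$\nobreakdash-adic valuation of either coefficient is at least $s$, which is all you use --- but it should be corrected. Finally, the ``delicate step'' you flag (that $v_2(e')=s$ and that the $2$\nobreakdash-part of the image of the $e'$\nobreakdash-torsion in $H^{ab}$ coincides with $\widetilde H_2\times E$, the image of the $2^s$\nobreakdash-torsion) does need to be written out, since the defining condition of $C'$ quantifies over $h$ with $h^{e'}=1_H$ rather than $h^{2^s}=1_H$; it follows from $e'=[L':k]\gcd(\#H,l)$ together with $2\nmid l$ and the fact that the $H_2$\nobreakdash-component of the image of any $h$ with $h^{e'}=1_H$ equals that of the $2$\nobreakdash-part of $h$, which is $2^s$\nobreakdash-torsion. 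With these additions your argument is complete and correct.
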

\begin{proof} By the case 2) of Lemma \ref{cor:abel}, we only need to compute the kernel $\Delta$ of the map $$\Res: H^1(L'/k, H^1(F/k,\Q/\Z)) \rightarrow H^1(L'/k,C').$$
	
	If $2\mid l$, then $\Res$ is a zero map by Lemma \ref{lem:map-0}. If  either $2 \mid \rho$ or $H'_2$ contains a subgroup $\Z/2\times \Z/2$, then
	$2\mid \#\Ker(\chi_h)$ for any $h\in H$;
	if $H'_2=0$, then $r$ is odd; the map $\Res$ is zero for all above cases by Lemma \ref{lem:map-0}. The proof of the case 1) then follows from the case 2) of Lemma \ref{cor:abel}.
	
	Now it remains the case that $2\nmid l\rho$ and $H'_2\cong \Z/2^{s}$ with $s\geq 1$. It is clear that $$H^1(L'/k, H^1(F/k,\Q/\Z))=H^1(\Z/2^s, H^1(H_2,\Q/\Z))\times H^1(H'_{odd}, H^1(H_{odd},\Q/\Z)).$$
Obviously $\Res$ restricting to $H^1(H'_{odd}, H^1(H_{odd},\Q/\Z))$ is zero by Lemma \ref{lem:map-0}. So it remains to compute the kernel of $\Res\mid_{H^1(\Z/2^s, H^1(H_2,\Q/\Z))}$.
We can write $$H^1(\Z/2^s, H^1(H_2,\Q/\Z))=A_1\times A_2\times A_3\times A_4, $$
where \begin{align} \notag
	&A_1:=H^1(\Z/2^s, H^1(\widetilde H_{2},\Q/\Z)),A_2:=H^1(\Z/2^s, H^1(\Z/2^s,\Q/\Z))^{u_2},\\
	\label{def:A}
	 &A_3:=\prod_{i=1}^vH^1(\Z/2^s, H^1(\Z/2^{s_i},\Q/\Z)),A_4:=\prod_{j=1}^w H^1(\Z/2^s, H^1(\Z/2^{t_j},\Q/\Z)).
	\end{align}
Obviously $A_1\cong (\Z/2^s)^{u_1},A_2\cong (\Z/2^s)^{u_2},A_3\cong \prod_{i=1}^v\Z/2^{s_i}$ and $A_4\cong (\Z/2^s)^w.$

	If $\chi \in A_3$, obviously  $2\mid \#\Ker(\chi_h)$ for any $h\in H$ since $s_i<s$, one obtains $A_3\subset \Delta$ by Lemma \ref{lem:map-0}. In the following we will show that $A_4\subset \Delta$. It suffices that $\Res$ restricting to $H^1(\Z/2^s, H^1(\Z/2^{t_j},\Q/\Z))$ is zero. Let $\chi \in H^1(\Z/2^s, H^1(\Z/2^{t_j},\Q/\Z))$. By Lemma \ref{lem:map-0}, we only need to show $2\mid \#\Ker(\chi_h)$ for any $h\in H $ with $h^{e'}=1_{H}$. Let $\bar h$ be the image of $h$ in $H^{ab}$.  
	Since $2\nmid l$, one obtains $v_2(e')=s$. Since $\Im(\chi)\subset H^1(\Z/2^{t_j},\Q/\Z)$, it suffices to show $2\mid \#\Ker(\chi_h)$ when $\bar h\in \Z/2^{t_j}$ satisfies $\bar h^{2^s}=1_{H^{ab}}$.  Let $N=\max\{2s-t_j,0\}<s$ since $t_j>s$. Let $\sigma$ be a generator of $H'_2$ and $\tau$ a generator of $\Z/2^{t_j}$. We can write $\bar h=\tau^{2^{\alpha}}$ with $\alpha\geq t_j-s$. Therefore $$\chi(\sigma^{2^N})(\bar h)=2^{\alpha+N}\chi(\sigma)(\tau)=0$$
	since $\alpha+N\geq t_j-s +2s-t_j=s$ and the order of $\chi$ is a factor of $2^s$. One has $\sigma^{2^N}\in \Ker(\chi_h)$, it implies $2\mid \#\Ker(\chi_h)$ since $N<s$. 
	
	We may show that $A_2\subset \Delta$ by a similar argument as for $A_4$.  Let $\chi \in H^1(\Z/2^s, H^1(\Z/2^s,\Q/\Z))$. By Lemma \ref{lem:map-0}, we only need to show $2\mid \#\Ker(\chi_h)$ for any $h\in \Z/2^{s}\subset H$ with $h^{e'}=1_{H}$. By our assumption, the generator $\tau$  of $\Z/2^s$ is not contained in the image of $\{h\in H: h^{2^s}=1_H\}$. Therefore, it suffices to show $2\mid \#\Ker(\chi_h)$ for any $\bar h\in \Z/2^{s}\subset H^{ab}$ with $\bar h^{2^{s'}}=1_{H^{ab}}$, where  $s'<s$ is a non-negative integer. Let $\tau$ be a generator of $\Z/2^s\subset H$. We can write $\bar h=\tau^{2^{\alpha}}$ with $\alpha\geq s-s'$. Therefore $$\chi(\sigma^{2^{s'}})(h)=2^{\alpha+s'}\chi(\sigma)(\tau)=0$$
	since $\alpha+s'\geq s-s'+s'=s$ and the order of $\chi$ is a factor of $2^s$. Therefore $\sigma^{2^{s'}}\in \Ker(\chi_h)$, it implies $2\mid \#\Ker(\chi_h)$ since $s'<s$.
	
	For any $\chi\in A_1$, one has $2\chi\in \Delta$ by Lemma \ref{lem:map-0}, it remains to show that $\chi \not \in \Delta$ for any $\chi$ has order $2^s$. Let $$\Theta:=\{\sum_{\sigma\in H'_2}\tau_\sigma\otimes \sigma \in H^1(\widetilde H_{2},\Q/\Z)\otimes\mathbb Z[H'_2]: \sum_{\sigma\in H'_2}\tau_\sigma=0\}.$$ 
	The module $\Theta$ is a direct factor of $C'$ (see (\ref{def:C'})) since $\widetilde H_{2}$ is contained in the image of  $\{h\in H: h^{2^s}=1_H\}$ in $H^{ab}$ and $v_2(e')=s$, hence we only need to show $\Res(\chi)\neq 0 \in H^1(H'_2,\Theta)$.
	
	From the exact sequence of $H'_2$-modules $$0\rightarrow \Theta \rightarrow H^1(\widetilde H_{2},\Q/\Z)\otimes\mathbb Z[H'_2]\xrightarrow{j} H^1(\widetilde H_{2},\Q/\Z) \rightarrow 0,$$
	one derives the isomorphism $\partial: H^1(\widetilde H_{2},\Q/\Z) \xrightarrow{\cong} H^1(H'_2,\Theta)$, where $j$ sends
	$\sum_{\sigma}\tau_\sigma\otimes \sigma$ to $\sum_{\sigma}\tau_\sigma$.
	
	Let $a:=-\sum_{\sigma\in H'_2}\chi(\sigma)\otimes\sigma\in H^1(\widetilde H_{2},\Q/\Z)\otimes \mathbb Z[H'_2]$. Let $\gamma$ be the generator of $ H'_2$ which has order $2^s$. Let $b:=j(a)=-\sum_{\sigma\in H'_2}\chi(\sigma)=2^{s-1}(2^s-1)\chi(\gamma)\neq 0\in H^1(\widetilde H_{2},\Q/\Z)$
	since $\chi$ has order $2^s$. Therefore
	$\partial(b)\neq 0 \in H^1(H'_2,\Theta)$ since $\partial$ is an isomorphism. On the other hand,
	by a similar computation as in (\ref{compute:a}), one has $\partial(b)= \Res(\chi)$, then $\Res(\chi)\neq 0$ and  the proof follows.
\end{proof}

\begin{prop}
Let $P(t)$ be an irreducible polynomial and $p$ a prime number. Let $L=k[t]/(P(t))$ and $L'=L\cap K$. Suppose that $L'/k$ is cyclic of order $p^s$ and that $G=\Gal(K/k)$ is an abelian group. Let $X$ be the variety (\ref{def:X}) defined by the equation (\ref{eq:norm}). We can write $G=G_1 \times G_2$ satisfying $L'\subset K^{G_2}$, $G_1=\Z/p^{s'},s'\geq s$ and $$G_2=(\Z/p^s)^r\times\prod_{i=1}^{r_1} \Z/p^{e_i}\times\prod_{j=1}^{r_2} \Z/p^{\mu_j}\times H_1,$$ where $e_i<s$ and $\mu_j>s$ for any $i, j$ and $H_1$ has order prime to $p$.
\begin{enumerate}[(1)]
	\item If either $p$ is odd or $p=2$ and $2\mid[L:L']$, then $$\Br_\mathrm{un}(X)/\Br_0(X)\cong (\Z/p^s)^{r+r_2}\times\prod_{i=1}^{r_1} \Z/p^{e_i}.$$
	
	\item If $p=2$ and $2\nmid[L:L']$, then $$\Br_\mathrm{un}(X)/\Br_0(X)\cong (\Z/2^{s-1})^r\times (\Z/2^{s})^{r_2}\times\prod_{i=1}^{r_1} \Z/2^{e_i}.$$
\end{enumerate}
\end{prop}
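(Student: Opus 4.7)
The plan is to apply Lemma \ref{cor:abel}(1), which is available because $G$ is abelian, hence $G/[H,H] = G$ is abelian. The resulting exact sequence is
\[
0 \to H^2(G/H, \Q/\Z) \to B \to \Ker\bigl[(\Res,\,u\cup)\colon H^1(G/H, \widehat{H}) \to H^1(G/H, C') \oplus H^3(G/H,\Q/\Z)\bigr] \to 0
\]
with $B \cong \Br_\mathrm{un}(X)/\Br_0(X)$, $u \in H^2(G/H, H)$ the class of $1 \to H \to G \to G/H \to 1$, and $\widehat H := H^1(H,\Q/\Z)$. Since $G/H = \Gal(L'/k) \cong \Z/p^s$ is cyclic, $H^2(G/H,\Q/\Z) = 0$, so $\Br_\mathrm{un}(X)/\Br_0(X)$ is identified with the kernel above. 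The $G/H$-action on $\widehat H$ is trivial, so $H^1(G/H, \widehat H) = \widehat H[p^s]$ and $H^3(G/H,\Q/\Z) = \Z/p^s$.

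The key technical step is to decompose and analyze $u\cup$. Setting $N := p^s G_1 \cong \Z/p^{s'-s}$, I have $H = N \oplus G_2$, whence $\widehat H[p^s] = \widehat N[p^s] \oplus \widehat{G_2}[p^s]$. The extension $1 \to H \to G \to G/H \to 1$ is the push-forward along $N \hookrightarrow H$ of the cyclic extension $0 \to N \to G_1 \to G/H \to 0$, so $u = (u_0, 0) \in H^2(G/H, N) \oplus H^2(G/H, G_2)$ with $u_0$ the class of that cyclic extension. By naturality of the cup product, $u\cup$ vanishes on $\widehat{G_2}[p^s]$ and coincides with $u_0\cup$ on $\widehat N[p^s]$. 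I will then show that $u_0\cup \colon \widehat N[p^s] \to \Z/p^s$ is injective by applying the Hochschild-Serre spectral sequence to $1 \to N \to G_1 \to G/H \to 1$ with $\Q/\Z$-coefficients: one has $E_2^{2,0} = H^2(G/H, \Q/\Z) = 0$ and $E_2^{0,2} = H^2(N, \Q/\Z) = 0$, while the abutment $H^2(G_1, \Q/\Z) = 0$ because $G_1 = \Z/p^{s'}$ is cyclic. Since differentials in or out of $E_r^{1,1}$ for $r \geq 3$ land in zero groups, $\Ker(d_2^{1,1}) = E_\infty^{1,1} = 0$; combined with $d_2^{1,1} = -u_0\cup$, this gives the injectivity.

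Granting this, $\Ker(u\cup) = \widehat{G_2}[p^s]$, so it remains to compute $\Ker\bigl[\Res\colon \widehat{G_2}[p^s] \to H^1(G/H, C')\bigr]$. I first observe that the condition defining $C'$ decomposes along $\widehat H = \widehat N \oplus \widehat{G_2}$ into $C' = C'_N \oplus C'_{G_2}$, so the diagonal image of $\widehat{G_2}[p^s]$ lies in $C'_{G_2}$ and the kernel is detected in $H^1(G/H, C'_{G_2})$. As an abelian group, $\widehat{G_2}[p^s] = \Hom(G_2, \Z/p^s) = (\Z/p^s)^{r+r_2} \times \prod_i \Z/p^{e_i}$, where the $(\Z/p^s)^{r_2}$-summand arises from the duals of the $\Z/p^{\mu_j}$-factors. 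In case (1), Lemma \ref{lem:map-0} immediately gives $\Res = 0$ (every $\chi$ has odd order when $p$ is odd, and the second criterion of the lemma applies when $p = 2$ and $2\mid l$), producing the claimed group $(\Z/p^s)^{r+r_2}\times \prod_i \Z/p^{e_i}$. In case (2), I will replay the cocycle/coboundary analysis from the proof of Proposition \ref{prop:split}(2) applied summand-by-summand to $\widehat{G_2}[2^s]$: the $(\Z/2^s)^r$-summand coming from the direct $(\Z/2^s)^r$-factor of $G_2$ loses a factor of $2$ (already visible in the model case $G = \Z/2^s \times \Z/2^s$) and contributes only $(\Z/2^{s-1})^r$, while the $\prod \Z/2^{e_i}$- and $(\Z/2^s)^{r_2}$-summands lie entirely in the kernel. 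I expect this last explicit computation to be the main technical obstacle, particularly in keeping track of how the condition defining $C'_{G_2}$ restricts to each cyclic summand of $\widehat{G_2}[2^s]$.
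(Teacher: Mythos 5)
Your proposal is correct and follows essentially the same route as the paper: the same splitting $u=(u_0,0)$ with $u_0$ the class of $0\to \Z/p^{s'-s}\to G_1\to \Z/p^s\to 0$, the same Hochschild--Serre argument using $H^2(G_1,\Q/\Z)=0$ to get injectivity of $u_0\cup$ on the $\widehat N$-part, Lemma \ref{lem:map-0} for case (1), and the summand-by-summand reduction to the $A_1$/$A_4$ computations of Proposition \ref{prop:split} for case (2). The only (harmless) difference is that you make explicit the decomposition $C'=C'_N\oplus C'_{G_2}$, which the paper leaves implicit.
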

\begin{proof} Let $u'\in H^2(\Bbb Z/p^{s}, \Bbb Z/p^{s'-s})$ be defined by the group extension
	\begin{equation*}
		0\rightarrow \Bbb Z/p^{s'-s} \rightarrow G_1\rightarrow \Bbb Z/p^s \rightarrow 0.
	\end{equation*}
Let $u\in H^2(\Bbb Z/p^s,H^{ab})$ be defined by (\ref{extension}), then
$$u=(u',0)\in H^2(\Bbb Z/p^s,H^{ab})\cong H^2(\Bbb Z/p^s,\Bbb Z/p^{s'-s})\oplus H^2(\Bbb Z/p^s,G_2).$$
 Since $H^2(G_1,\mathbb Q/\mathbb Z)=0$, by Hochschild-Serre's spectral sequence, it implies that the map $$u'\cup: H^1(\Bbb Z/p^s, H^1(\Bbb Z/p^{s'-s},\mathbb Q/\mathbb Z))
\rightarrow H^3(\Bbb Z/p^s,\mathbb Q/\mathbb Z)$$ is injective, hence the kernel of the map in Theorem \ref{prop:Br} $$u\cup: H^1(\Bbb Z/p^s, H^1(H,\mathbb Q/\mathbb Z))
\rightarrow H^3(\Bbb Z/p^s,\mathbb Q/\mathbb Z)$$ coincides with $H^1(\Bbb Z/p^s, H^1(G_2,\mathbb Q/\mathbb Z))$. The proof of the case (1) then follows from Lemma \ref{lem:map-0} and the case 1) of Lemma~\ref{cor:abel}.

Suppose $p=2$ and $2\nmid[L:L']$. We need to determine the kernel  of the map $$\Res: H^1(\Bbb Z/2^s, H^1(G_2,\mathbb Q/\mathbb Z))\rightarrow H^1(\Bbb Z/2^s,C')$$ which we denote by $\Delta$. We can write $H^1(\Bbb Z/2^s, H^1(G_2,\mathbb Q/\mathbb Z))=B_1\times B_2\times B_3,$ where
\begin{align*}& B_1:=H^1(\Z/2^s, H^1(\Z/2^s,\Q/\Z))^r,\\
	&B_2:=\prod_{i=1}^{r_1} H^1(\Z/2^s,H^1(\Z/2^{e_i},\Q/\Z)),\\
	& B_3:=\prod_{j=1}^{r_2} H^1(\Z/2^s, H^1(\Z/2^{\mu_j},\Q/\Z)).
\end{align*}
We have $B_1\cong (\Z/2^s)^{r},B_2\cong \prod_{i=1}^{r_1}\Z/2^{e_i}$ and $B_3\cong (\Z/2^s)^{r_2}.$

If $\chi \in B_2$, obviously  $2\mid \#\Ker(\chi_h)$ for any $h\in H$ since $e_i<s$, one obtains $B_2\subset \Delta$ by Lemma \ref{lem:map-0}. By a similar argument as in the proof of Proposition \ref{prop:split} for $A_4$ in (\ref{def:A}), we can show $B_3\subset \Delta$; by a similar argument as in the proof of Proposition \ref{prop:split} for $A_1$ in (\ref{def:A}) we can show that $2A_1\subset \Delta$ and $\chi \not \in \Delta$ for any $\chi\in A_1$ has order $2^s$. The proof then follows from the case 1) of Lemma~\ref{cor:abel}.
\end{proof}

\begin{rem}
	In fact, if $L'/k$ is  not cyclic, by the case 1) of Lemma~\ref{cor:abel}, we have an exact sequence $$	0\rightarrow H^2(G/H, \Q/\Z) \rightarrow \Br_\mathrm{un}(X)/\Br_0(X) \rightarrow \Lambda \rightarrow 0$$
where $\Lambda$ is a subgroup of $H^1(G/H,H^1(H,\Q/\Z))$ and can be explicitly determined. 	
\end{rem}

\begin{lem}\label{lem:Cor} Let $G=H \rtimes C_2$ is the dihedral group $D_n$, where $H$ is cyclic of order $n$ and $C_2=\Bbb Z/2$. For any character $\chi \in H^1(H,\Bbb Q/\Bbb Z)$, $\Cor_{H/G}(\chi)=0\in H^1(G,\Bbb Q/\Bbb Z)$.
\end{lem}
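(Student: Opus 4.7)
The plan is to compute $\mathrm{Cor}_{H/G}(\chi)$ explicitly using the cocycle formula for corestriction, or equivalently via the transfer map on abelianizations. Since $H^1(G, \mathbb{Q}/\mathbb{Z}) = \mathrm{Hom}(G^{ab}, \mathbb{Q}/\mathbb{Z})$ and likewise for $H$, the corestriction on $H^1$ is dual to the transfer (Verlagerung) homomorphism $\mathrm{Ver}: G^{ab} \to H^{ab}$, so it suffices to prove that $\mathrm{Ver}$ is the zero map for $G = D_n$.

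The concrete computation is as follows. Write $G = H \sqcup sH$ with $s^2 = 1$ and $shs^{-1} = h^{-1}$ for all $h \in H$, and choose coset representatives $g_1 = 1$, $g_2 = s$. The standard formula gives, for $g \in G$, $\mathrm{Ver}(g) = h_1(g) \cdot h_2(g) \in H^{ab}$, where $g g_i = g_{\sigma(i)} h_i(g)$. For $h \in H$ one has $h \cdot 1 = 1 \cdot h$ and $h \cdot s = s \cdot (s^{-1} h s) = s \cdot h^{-1}$, so $\mathrm{Ver}(h) = h \cdot h^{-1} = 1$. For $g = s$ one has $s \cdot 1 = s \cdot 1$ and $s \cdot s = 1 \cdot 1$, so $\mathrm{Ver}(s) = 1 \cdot 1 = 1$. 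Since $H$ and $s$ together generate $G$, the transfer vanishes identically, hence $\mathrm{Cor}_{H/G}(\chi) = 0$ for every character $\chi$ of $H$.

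There is essentially no obstacle here; the only point to be careful about is picking the correct normalisation of the corestriction/transfer formula and checking that $\mathrm{Cor}_{H/G}(\chi)$ is defined as a class in $H^1(G, \mathbb{Q}/\mathbb{Z})$ (i.e.\ as a homomorphism $G \to \mathbb{Q}/\mathbb{Z}$), not merely as a cocycle modulo coboundaries. Alternatively, one may give a one-line proof by noting that $\mathrm{Res}_H \circ \mathrm{Cor}_{H/G}(\chi)(h) = \chi(h) + \chi(h^{-1}) = 0$, so $\mathrm{Cor}_{H/G}(\chi)$ factors through $G/H = C_2$, and then directly evaluating the cocycle at $s$ (as above) to show that the induced character on $C_2$ is also trivial.
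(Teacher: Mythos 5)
Your proof is correct, but it takes a genuinely different route from the paper's. The paper invokes the double coset (Mackey) formula twice: $\Res_{G/H}\Cor_{H/G}(\chi)=N_{G/H}(\chi)=\chi+(-\chi)=0$ because the reflection acts on $H$ by inversion, and $\Res_{G/C_2}\Cor_{H/G}(\chi)=0$ because $C_2\cap H=1$; since $G=HC_2$, a homomorphism $G\to\Q/\Z$ vanishing on both $H$ and $C_2$ is zero. You instead compute the transfer $\Ver\colon G^{\mathrm{ab}}\to H^{\mathrm{ab}}$ explicitly on the coset representatives $\{1,s\}$, finding $\Ver(h)=h\cdot h^{-1}=1$ and $\Ver(s)=1$, and then use the duality $\Cor_{H/G}(\chi)=\chi\circ\Ver$ on $H^1(-,\Q/\Z)=\Hom(-,\Q/\Z)$. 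Your computation is right, and the conclusion that $\Ver$ kills a generating set of $G$ (hence of $G^{\mathrm{ab}}$) does suffice. What your approach buys is self-containedness and slightly more information: you show the transfer itself is identically zero, which gives the vanishing of $\Cor_{H/G}$ for coefficients in any trivial module, not just $\Q/\Z$. What the paper's approach buys is brevity via citation (the two restriction formulas from Neukirch--Schmidt--Wingberg) without choosing coset representatives. Your closing ``one-line'' alternative --- restricting to $H$ to see the class factors through $G/H=C_2$ and then evaluating at $s$ --- is essentially the paper's argument with the second restriction replaced by a direct cocycle evaluation, and is also fine.
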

\begin{proof}	
Note that $G=H C_2$, by \cite[Corollary 1.5.7 and 1.5.8]{NSW},  we have  $$\Res_{G/H}\Cor_{H/G}(\chi)=N_{G/H}(\chi)=\chi+(-\chi)=0, \Res_{G/C_2}\Cor_{H/G}(\chi)=0,$$
which implies $\Cor_{H/G}(\chi)=0$.
\end{proof}

\begin{prop}\label{prop:dehidral}
Let $P(t)$ be an irreducible polynomial. Let $L=k[t]/(P(t))$ and $L'=L\cap K$. Suppose $L'/k$ has order $2$, $K/k$ is Galois with $G=\Gal(K/k)=H \rtimes \Bbb \Gal(L'/k)$ is the dihedral group $D_{\tilde n}$, where $H=\Gal(K/L')$ is cyclic of order $\tilde n$. Let $X$ be the variety (\ref{def:X}) defined by the equation (\ref{eq:norm}). Then,
\begin{enumerate}[(1)]
	\item if $\tilde n$ is odd or $4\mid \tilde n[L:L']$, then $\Br_\mathrm{un}(X)/\Br_0(X)\cong \Bbb Z/\tilde n.$
	
	\item if $\tilde n$ is even and $4\nmid \tilde n[L:L']$, then $\Br_\mathrm{un}(X)/\Br_0(X)\cong \Bbb Z/(\tilde n/2).$
\end{enumerate}
\end{prop}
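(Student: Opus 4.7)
The plan is to apply Theorem~\ref{prop:Br} to the dihedral setup $G = D_{\tilde n} = H \rtimes (\Z/2)$ with $H = \Z/\tilde n$ and $G/H = \Gal(L'/k) = \Z/2$, exploiting the vanishing of corestriction on the cyclic subgroup supplied by Lemma~\ref{lem:Cor}. Since $\Cor_{H/G}(\chi) = 0$ for every $\chi \in H^1(H, \Q/\Z)$, the subgroup $C$ of (\ref{def:C}) is all of $H^1(H, \Q/\Z) = \Z/\tilde n$. Inflation-restriction applied to $G$ and $H$, together with $H^2(G/H,\Q/\Z) = H^2(\Z/2,\Q/\Z) = 0$, identifies the image of the natural map $H^1(G, \Q/\Z) \to C$ with the $G/H$-fixed part $H^1(H, \Q/\Z)^{G/H}$; under the inversion action of $G/H$ on $\Z/\tilde n$, this fixed part is $\{0, \tilde n/2\}$ when $\tilde n$ is even and trivial when $\tilde n$ is odd. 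Consequently $C$ modulo this image is $\Z/\tilde n$ for $\tilde n$ odd and $\Z/(\tilde n/2)$ for $\tilde n$ even.

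The second exact sequence of Theorem~\ref{prop:Br} identifies $\Ker[\Sha^2_\omega(\widehat T) \to \Sha^2_\omega(\widehat{T'})]$ with $\Ker(\Res,\, u \cup)$. The semidirect decomposition $G = H \rtimes (\Z/2)$ forces the extension class $u \in H^2(G/H, H^{ab})$ to vanish, so $u\cup = 0$ and the kernel is $\Ker[\Res\colon H^1(G/H, H^1(H, \Q/\Z)) \to H^1(G/H, C')]$. The source is $H^1(\Z/2, \Z/\tilde n)$ with inversion action, which equals $\Z/\gcd(2, \tilde n)$. For $\tilde n$ odd this source is zero, the kernel vanishes, and combining with the previous paragraph yields $\Br_\mathrm{un}(X)/\Br_0(X) \cong \Z/\tilde n$, proving the $\tilde n$-odd instance of case (1).

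For $\tilde n$ even I represent the unique nontrivial class of $H^1(G/H, H^1(H, \Q/\Z)) = \Z/2$ by a cocycle $\chi$ with $\chi(\tau) = \psi$ for some odd $\psi \in \Z/\tilde n$. Its image $(\psi, \psi) \in C'$ is a coboundary in $H^1(G/H, C')$ precisely when some $b = (b_1, b_\tau) \in C'$ satisfies $\tau b - b = (\psi, \psi)$; the dihedral action $\tau(b_1, b_\tau) = (-b_\tau, -b_1)$ reduces this to finding $b$ with $b_1 + b_\tau \equiv -\psi \pmod{\tilde n}$ subject to the defining $C'$-constraint $\gcd(\tilde n, e') \mid l(b_1 + b_\tau)$, where $e' = \gcd(n,\deg P) = 2\gcd(\tilde n, l)$ and $l = [L:L']$. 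A short $2$-adic valuation analysis then shows that such a $b$ and odd $\psi$ exist exactly when $4 \mid \tilde n [L:L']$; this gives $\Ker = \Z/2$ in case (1) and $\Ker = 0$ in case (2).

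Assembling these inputs in the short exact sequence $0 \to C/j^\ast(H^1(G,\Q/\Z)) \to \Br_\mathrm{un}(X)/\Br_0(X) \to \Ker \to 0$ gives $\Z/(\tilde n/2)$ in case (2), and in case (1) with $\tilde n$ even a group of order $\tilde n$. To finish case (1) I verify this extension is cyclic, ruling out $\Z/2 \oplus \Z/(\tilde n/2)$, by exhibiting an explicit Brauer class of order $\tilde n$ through the description of $\beta'$ in Lemma~\ref{lem:rem}, applied to a well-chosen character of $\Gamma_L$ coming from a generator of $H^1(H,\Q/\Z)$. The hardest part will be the $2$-adic divisibility computation that pins down exactly when $\Res$ vanishes, together with the concluding check that the extension is cyclic rather than split in the $\tilde n$-even subcase of~(1).
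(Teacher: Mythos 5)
Your overall route is the same as the paper's: Lemma~\ref{lem:Cor} to show $C=H^1(H,\Q/\Z)$, inflation--restriction to compute the cokernel $\Z/\tilde n$ resp.\ $\Z/(\tilde n/2)$, vanishing of $u$ from the semidirect splitting, and reduction to $\Ker(\Res)$. Your explicit coboundary criterion is also set up correctly: since the $C'$-condition constrains only the sum $b_1+b_\tau$, the generator of $H^1(G/H,H^1(H,\Q/\Z))$ dies in $H^1(G/H,C')$ if and only if $\gcd(e',\tilde n)$ divides $l$, where $e'=2\gcd(\tilde n,l)$. The gap is that the ``short $2$-adic valuation analysis'' you defer does not produce the dichotomy $4\mid \tilde n\,l$. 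Writing $a=v_2(\tilde n)\ge 1$ and $b=v_2(l)$, the $2$-part of $\gcd(e',\tilde n)$ is $2^{\min(b+1,\,a)}$ and its odd part always divides $l$, so the criterion holds iff $\min(b+1,a)\le b$, i.e.\ iff $a\le b$; this agrees with $4\mid\tilde n l$ only when $a=1$. Concretely, for $\tilde n=4$ and $L=L'$ one has $l=1$, $e'=2$, the constraint $2\mid l$ fails, $\Res$ is injective, and your own reduction outputs $\Z/2$ rather than the $\Z/4$ demanded by case (1). So either this step or the target statement must be revisited before the proof can close. Be aware that the paper's own treatment of the subcase $4\mid\tilde n l$ (factoring $\Res$ through the order-two subgroup $B\subset H^1(H,\Q/\Z)$ using $H^{-1}(G/H,H^1(H,\Q/\Z))\cong H^{-1}(G/H,B)$) only gives an actual factorization of the diagonal map on cohomology when $v_2(\tilde n)=1$, so you cannot simply import that argument to rescue the case $4\mid\tilde n$.

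The concluding cyclicity step is also not viable as sketched. Any class of the form $\sum_i\Cor_{L_i/k}(t-\eta_i,\chi_i)$ produced via Lemma~\ref{lem:rem} lies in the image of $\beta'$, which by Corollary~\ref{cor:seq} is exactly the subgroup $H^1(k,\widehat{T'})/j^{*}H^1(k,\widehat{T})\cong C/\Res\bigl(H^1(G,\Q/\Z)\bigr)\cong\Z/(\tilde n/2)$ of $\Br_\mathrm{un}(X)/\Br_0(X)$; every such class therefore has order dividing $\tilde n/2$ and cannot certify that an extension of $\Z/2$ by $\Z/(\tilde n/2)$ is cyclic. The paper proves cyclicity quite differently: it shows $H^1(L'/k,\Pic(\overline X)^{\Gamma_{L'}})=0$, hence $\Br_\mathrm{un}(X)/\Br_0(X)$ injects into $\Br_\mathrm{un}(X_{L'})/\Br_0(X_{L'})$, and then checks the latter is cyclic using that $K/L'$ is cyclic. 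You would need an argument of this kind, not an explicit cyclic-algebra generator.
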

\begin{proof}Suppose $\tilde n$ is odd. By Lemma \ref{lem:Cor}, the cokernel of the map $H^1(G,\Bbb Q/\Bbb Z)\rightarrow C$ is $\Bbb Z/\tilde n$. Since $G/H$ is cyclic, one obtains $H^2(G/H, \Q/\Z)=0$. Since $H^1(G/H,H^1(H,\Q/\Z))=0$, by (\ref{seq:Br2}) in Theorem \ref{prop:Br}, we have $\Ker[\Sha^2_\omega(\widehat{T})\rightarrow \Sha^2_\omega(\widehat{T'})]=0$. Therefore, $$\Br_\mathrm{un}(X)/\Br_0(X)\cong \Bbb Z/\tilde n.$$
	
In the following, we assume that $\tilde n$ is even. The cokernel of the map $H^1(G,\Bbb Q/\Bbb Z)\rightarrow C$ in (\ref{seq:Br}) is $\Bbb Z/(\tilde n/2)$ by Lemma \ref{lem:Cor}.  Since $G=\Gal(K/k)=H \rtimes \Bbb Z/2$, then the cohomology class $u=0$ which is given by the extension (\ref{extension}), hence the map $u\cup$ in Theorem \ref{prop:Br} is a zero map. 

Suppose $4\nmid \tilde n[L:L']$. Then $v_2(\tilde n)=1$ and $[L:L']$ is odd. Let $\tilde H$ be the unique subgroup of $H$ of order $2$. Then $H^1(\tilde H,\Q/\Z)$ is the $2$-primary part of $H^1(H,\Q/\Z)$.
Let 
\begin{equation}\label{def:c''}
	\tilde C':=\{(\chi_1,\chi_2):\chi_1+\chi_2=0,\chi_i\in  H^1(\tilde H,\Bbb Q/\Bbb Z)\text{ for } i=1,2 \}\cong H^1(\tilde H,\Bbb Q/\Bbb Z).
\end{equation}
The module $\tilde C'$ is the  $2$-primary part of $C'$ in (\ref{def:C'}). Since $G/H$ has order $2$, one obtains $H^1(G/H, C')\cong H^1(G/H, \tilde C')$. But the natural restriction map $H^1(G/H, H^1(H,\Bbb Q/\Bbb Z))\rightarrow H^1(G/H, \tilde C')$  is an isomorphism by (\ref{def:c''}), therefore the map $\Res$ in Theorem \ref{prop:Br} is injective. So  $\Br_\mathrm{un}(X)/\Br_0(X)\cong \Bbb Z/(\tilde n/2).$

Now it remains the case $4\mid \tilde n[L:L']$.  Let $B$ be the unique subgroup of $H^1(H,\Q/\Z)$ of order $2$. In the following we will show $B\otimes \Bbb Z[G/H]\subset C'$. Let $\chi\in B$, if $2\mid [L:L']$, then $[L:L']\chi =0$, hence $B\otimes \Bbb Z[G/H]\subset C'$; if $2\nmid [L:L']$, then $4\mid n$ , it implies $\chi(h)=0$ for any $h\in H$ with $h^2=1$, hence $B\otimes \Bbb Z[G/H]\subset C'$. 

On the other hand, the natural map $H^{-1}(G/H, H^1(H,\Q/\Z)) \rightarrow H^{-1}(G/H, B)$ is an isomorphism. Since $G/H$ is cyclic, so $H^1(G/H, H^1(H,\Q/\Z)) \cong H^1(G/H, B)$,  hence the map $$\Res: H^1(G/H, H^1(H,\Bbb Q/\Bbb Z))\rightarrow H^1(G/H, C')$$ factors through $H^1(G/H, B\otimes \Bbb Z[G/H])=0$.  Therefore $\Ker[\Sha^2_\omega(\widehat{T})\rightarrow \Sha^2_\omega(\widehat{T'})]\cong \Bbb Z/2$ and the cardinality of  $\Br_\mathrm{un}(X)/\Br_0(X)$ is $\tilde n$ by Theorem \ref{prop:Br}. In the following, we will show $\Br_\mathrm{un}(X)/\Br_0(X)\cong \Bbb Z/\tilde n$ by showing that it is a subgroup of a cyclic group.

%
%
%
%

 First, we will show the map $\Br_\mathrm{un}(X)/\Br_0(X) \rightarrow \Br_\mathrm{un}(X_{L'})/\Br_0(X_{L'}) $ is injective, $i.e.$, the map $H^1(k,\Pic(\overline X)) \rightarrow H^1(L',\Pic(\overline X)) $ is injective. It is sufficient to show that $H^1(L'/k,\Pic(\overline X)^{\Gamma_{L'}})=0$.

Let $U'_1$ be the  open affine subvariety of $X$ defined by (\ref{eq:norm}) and $P(t)\neq 0$.
 By the exact sequence  \begin{equation}\label{seq:U'_times} 0\rightarrow \Bbb Z \rightarrow \Bbb Z[L/k]\oplus \Bbb Z[K/k]\rightarrow \bar k[U'_1]^\times/\bar k^\times \rightarrow 0,
 \end{equation}
 it is clear that $ H^1(L',\bar k[U'_1]^\times/\bar k^\times)=0$ since $L\cap K=L'$.

 The exact sequence
 \begin{equation*}
 	0\rightarrow \bar k[U'_1]^\times/\bar k^\times\rightarrow \mathbb D \rightarrow \Pic(\overline X)\rightarrow 0
 \end{equation*}
yields the following exact sequence
 \begin{equation}\label{seq:U'_1}
 	0\rightarrow (\bar k[U'_1]^\times/\bar k^\times)^{\Gamma_{L'}}\rightarrow \mathbb D^{\Gamma_{L'}} \rightarrow \Pic(\overline X)^{\Gamma_{L'}}\rightarrow H^1(L',\bar k[U'_1]^\times/\bar k^\times)= 0,
 \end{equation}
where $\mathbb D$ is defined by (\ref{D}).
Since $\mathbb D^{\Gamma_{L'}}$ is a permutation module, the sequence (\ref{seq:U'_1}) yields the injectivity of the map
$H^1(L'/k,\Pic(\overline X)^{\Gamma_{L'}})\rightarrow H^2(L'/k,(\bar k[U'_1]^\times/\bar k^\times)^{\Gamma_{L'}}).$
By the exact sequence~(\ref{seq:U'_times}), we can show $H^2(L'/k,(\bar k[U'_1]^\times/\bar k^\times)^{\Gamma_{L'}})\cong H^3(L'/k, \Bbb Z)=0$ since $L'/k$ is cyclic, hence $H^1(L'/k,\Pic(\overline X)^{\Gamma_{L'}})=0$.

Now it remains to show that $H^1(L',\Pic(\overline X))$ is cyclic. Let $L\otimes_k L'=L_1\oplus L_2$, by the definition of $\widehat {T'}$, then
$$ H^1(L',\widehat{T'})\subset \oplus_{i=1}^2 H^1(L_i.K/L_i,\mathbb Q/\mathbb Z)\cong \oplus_{i=1}^2 H^1(K/L',\mathbb Q/\mathbb Z).$$
Since $H^1(L',\widehat{T})\cong H^1(K/L',\mathbb Q/\mathbb Z)$, we have
$H^1(L',\widehat{T'})/j^*(H^1(L',\widehat{T}))\subset H^1(K/L',\mathbb Q/\mathbb Z)$ is a cyclic group  by the cyclicity of  $K/L'$. It is clear that $\Sha^2_\omega(\widehat T_{L'})=0$ since $K/L'$ is cyclic,  hence $H^1(L',\Pic(\overline X))$ is cyclic by (\ref{seq:main-1}) (replacing $k$ by $L'$).
\end{proof}
\begin{rem} Let $\Br_\mathrm{vert}(X):= \Br(k(t)) \cap \Br_\mathrm{un}(X)$. In fact, $\Br_\mathrm{vert}(X)/\Br_0(X)$ coincides with the image of the first map in (\ref{seq:Br-Galois}) by \cite[Proposition 2.5-b]{CHS}, then the sequence (\ref{seq:main-1}) may be rewritten as the following exact sequence
	\begin{equation}\label{seq:vert}
		0\rightarrow   \Br_\mathrm{vert}(X)/\Br_0(X) \rightarrow \Br_\mathrm{un}(X)/\Br_0(X)  \rightarrow \Ker[\Sha^2_\omega(\widehat{T})\rightarrow \Sha^2_\omega(\widehat{T'})]\rightarrow 0.
	\end{equation}
If $4\mid \tilde n$,  the case (1) of Proposition \ref{prop:dehidral}  shows that the sequence (\ref{seq:vert}) needs not split.	
\end{rem}


%

\bibliographystyle{alpha}
\end{document}